\title{An idelic view of ideals}
\author{Shin-Eui Song}
\email{libofmath@gmail.com}
\newcommand{\red}[1]{{\color{red}#1}\index{#1}}
\newcommand{\mfk}[1]{\mathfrak{#1}}
\newcommand{\zmod}[1]{\mathbb{Z}/#1\mathbb{Z}}
\newcommand{\obar}[1]{\mkern 1.5mu\overline{\mkern-1.5mu#1\mkern-1.5mu}\mkern 1.5mu}
\newtheorem{prp}{Proposition}[section]
\newtheorem{lem}[prp]{Lemma}
\newtheorem{thm}[prp]{Theorem}
\newtheorem{cor}[prp]{Corollary}
\newtheorem{clm}[prp]{Claim}
\theoremstyle{definition}
\newtheorem*{rmk*}{Remark}
\setlist{noitemsep}
\def\signed #1{{\leavevmode\unskip\nobreak\hfil\penalty50\hskip2em
  \hbox{}\nobreak\hfil(#1)%
  \parfillskip=0pt \finalhyphendemerits=0 \endgraf}}
\newsavebox\mybox
\newenvironment{aquote}[1]
  {\savebox\mybox{#1}\begin{quote}}
  {\signed{\usebox\mybox}\end{quote}}
\begin{document}
\begin{abstract} 
Ideles and adeles can be viewed as a generalization of Minkowski theory. Instead of embedding a number field into a product of archimedean completions, one embeds into a product of all of its completions with some restriction.
\\ \indent This survey starts with a review of point-set topology and constructs the real numbers from the rational numbers. Then we recap the concepts from local field and global fields. We, then, define adeles and ideles, and explore their properties. The ideles $\mathbb{I}_k$ modulo $k^*$ maps surjectively to the ideal class group $Cl_k$, and the compactness of $C_S^0$ will give rise to an alternative proof to the Dirichlet's $S$-unit theorem and the finiteness of ideal class group.
\end{abstract}
\maketitle
\tableofcontents
\section*{Notation}
Let $F$ be an ordered field, We use $F^{>0}:=\{ x \in F ~|~ x > 0 \}$ and similarly $F^{\geq 0}:= \{x \in F ~|~ x \geq 0 \}$. We also use $F^{<0}, F^{\leq 0}$ which are defined likewise.
\\ \indent Generally, $F$ will denote an arbitrary field and $k$ will denote a number field. $K$ will be used for a field extension of both $F$ and $k$.
\part{Preliminaries}
We assume that the reader has learned the ideal-theoretic approach. That is, the reader should be familiar with Dedekind domains, discrete valuation ring, and the unique factorization into product of prime ideals. Chapter 1 from Neukirch should suffice.
\section{Point-set Topology}~
Let $X$ be a topological space, then a \red{neighborhood} of $x \in X$ is a subset $V 
\subset X$ which contains an open set $U$ containing $x$, i.e. 
\[
	x \in U \subset V
\]
\indent The sequence $\{x_n \}$ in $X$ is said to \red{converge} to $x \in X$ if for all open set $U \subset X$ containing $x$, there exists $N \in \mathbb{N}$ such that $x_n \in U$  whenever $n \geq N$. A topological space $X$ is a \red{Hausdorff space} if for any two distinct points $x_1$ and $x_2$ in $X$, there exist two open sets $U_1$ and $U_2$ that contain $x_1$ and $x_2$ respectively, with $U_1 \cap U_2 = \varnothing$.
\begin{prp} Let $X$ be a Hausdorff space and let $\{ x_n \}$ be a convergent sequence in $X$, then $\{ x_n \}$ converges to an unique limit point.
\end{prp}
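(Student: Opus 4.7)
The plan is to argue by contradiction, exploiting the Hausdorff separation property against the definition of convergence. Specifically, I will suppose that $\{x_n\}$ converges to two distinct points $x$ and $y$ in $X$, and derive a contradiction by producing an index $n$ for which $x_n$ must lie in two disjoint open sets simultaneously.

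First, assuming $x \neq y$, I invoke the Hausdorff condition to obtain disjoint open sets $U_1$ and $U_2$ with $x \in U_1$ and $y \in U_2$ and $U_1 \cap U_2 = \varnothing$. Next, I apply the definition of convergence twice: since $x_n \to x$, there is $N_1$ with $x_n \in U_1$ for all $n \geq N_1$; since $x_n \to y$, there is $N_2$ with $x_n \in U_2$ for all $n \geq N_2$. Taking $N = \max(N_1, N_2)$, the term $x_N$ lies in $U_1 \cap U_2$, contradicting disjointness. Therefore $x = y$.

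There is essentially no obstacle here; the argument is a direct unwinding of the two definitions. The only point worth being careful about is writing the convergence hypothesis in terms of arbitrary open neighborhoods (rather than, say, metric balls), since the ambient space $X$ is only assumed to be a topological space. Since the proposition is stated for general Hausdorff spaces, using the neighborhood formulation of convergence given just above the proposition makes the proof immediate.
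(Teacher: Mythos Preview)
Your proof is correct and essentially identical to the paper's: both argue by contradiction, use the Hausdorff property to separate the two putative limits by disjoint open sets, apply the definition of convergence to obtain indices $N_1$ and $N_2$, and take $n \geq \max(N_1,N_2)$ to force $x_n$ into the empty intersection.
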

\begin{proof}
Suppose that the sequence $\{x_n\}$ converges to $x$ and $x'$, then there exists two disjoint open sets $U_x$ and $U_{x'}$ containing $x$ and $x'$ respectively. However, there exist $N_1, N_2 \in \mathbb{N}$ such that
    \[
    	\begin{array}{lcl}
        	n \geq N_1 & \Rightarrow & x_n \in U_{x} \\
            n \geq N_2 & \Rightarrow & x_n \in U_{x'}
            
        \end{array}
    \]
    then $x_n \in U_{x} \cap U_{x'}$ for $n \geq \max(N_1, N_2)$, which contradicts that the two open sets are disjoint. Hence any convergent sequence converges to an unique point in a Hausdorff space. 
\end{proof}
A point $x$ in a topological space $X$ is called an \red{isolated point} of $S \subset X$ if there exists a neighborhood $V$ of $x$ with $V \cap S = \{ x \}$. A \red{discrete set} is a subset of $X$ consisting only of isolated points. Notice that if a topological space $X$ has a discrete topology, one point subset $\{ x \}$ of $X$ is open. This is equivalent to saying that every point $x \in X$ is an isolated point of $X$. Hence we have that a subset $S \subset X$ is discrete if and only if $S$ has a discrete topology as its subspace topology.
\\ \indent A \red{limit point} $x$ of a set $Y$ is a point where every neighborhood of $x$ contains a point that is not $x$. A subset $Y$ of a topological space $X$ is called \red{dense} if either $x \in Y$ or $x$ is a limit point in $Y$. 
\begin{prp} Let $Y \subset X$ be dense and $U \subset X$ be an open set. $Y \cap U \neq \varnothing$.
\end{prp}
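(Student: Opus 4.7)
The plan is to pick an arbitrary point of $U$ (assuming $U$ is nonempty, which is the only nontrivial case since otherwise the statement is vacuous or false) and use the two cases in the definition of density to produce a point of $Y \cap U$.

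First I would fix $x \in U$. By the definition of density, either $x \in Y$ or $x$ is a limit point of $Y$. In the first case we are immediately done, since then $x \in Y \cap U$. In the second case I would use that $U$ itself is an open set containing $x$, hence a neighborhood of $x$ in the sense defined earlier in this section (take $V = U$ as both the open set and the neighborhood). By the definition of a limit point, every neighborhood of $x$ contains a point of $Y$ that is not $x$; applying this to $U$ yields some $y \in U$ with $y \in Y$, so $y \in Y \cap U$.

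There is no real obstacle here; the only subtlety is ensuring that the ``neighborhood'' in the definition of limit point can be replaced by the open set $U$, which is immediate since an open set containing $x$ is itself a neighborhood of $x$. I would also note at the outset that the statement should be read under the (implicit) assumption that $U$ is nonempty, since $Y \cap \varnothing = \varnothing$ for trivial reasons; alternatively, the argument above simply starts with ``pick $x \in U$'', which is only possible when $U \neq \varnothing$, and the conclusion $Y \cap U \neq \varnothing$ is equivalent to the existence of such an $x$ lying in $Y$.
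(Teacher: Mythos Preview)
Your proof is correct and follows essentially the same approach as the paper: pick $x \in U$, split into the two cases given by the definition of density, and in the limit-point case observe that $U$ itself is a neighborhood of $x$. Your additional remark about the implicit nonemptiness of $U$ is a fair observation that the paper leaves unstated.
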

\begin{proof} Pick $x \in U$. If $x \notin Y$, then $x$ is a limit point of $Y$. Clearly $U$ is a neighborhood of $x$, so, by the definition of a limit point, $U \cap Y \neq \varnothing$.
\end{proof}
\begin{prp}
Let $Z$ be a Hausdorff space and $f_1, f_2 : X \rightarrow Z$ be continuous maps. If $f_1$ and $f_2$ coincide in some dense subset $Y$ of $X$, then $f_1 = f_2$. 
\end{prp}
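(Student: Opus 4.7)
The natural plan is to argue by contradiction and leverage the immediately preceding proposition, which says that a nonempty open set must meet any dense subset. So suppose for contradiction that there is some $x \in X$ with $f_1(x) \neq f_2(x)$. The Hausdorff property of $Z$ is exactly what lets us separate these two images.

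More concretely, I would first invoke Hausdorffness of $Z$ to produce disjoint open sets $V_1, V_2 \subset Z$ with $f_1(x) \in V_1$ and $f_2(x) \in V_2$. Next, using continuity of both maps, form the set
\[
    U := f_1^{-1}(V_1) \cap f_2^{-1}(V_2),
\]
which is open in $X$ (intersection of two open preimages) and nonempty (it contains $x$). Then I would apply the preceding proposition: since $Y$ is dense and $U$ is a nonempty open set, $Y \cap U \neq \varnothing$. Picking any $y \in Y \cap U$, we obtain $f_1(y) \in V_1$ and $f_2(y) \in V_2$, but by hypothesis $f_1(y) = f_2(y)$, forcing $V_1 \cap V_2 \neq \varnothing$, contradicting disjointness.

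I do not foresee any real obstacle; the only small subtlety is being comfortable with the paper's slightly unusual wording of density (``either $x \in Y$ or $x$ is a limit point in $Y$'') which, combined with the preceding proposition, already packages the statement ``every nonempty open set meets $Y$''—precisely the form we need. An alternative route would be to show that the equalizer $E = \{x \in X : f_1(x) = f_2(x)\}$ is closed, by showing its complement is open via the same Hausdorff separation argument, and then conclude $E = X$ because $E$ is a closed set containing the dense set $Y$. Both approaches are essentially the same contrapositive; I would present the direct contradiction version since it reuses the previous proposition verbatim and keeps the exposition linear.
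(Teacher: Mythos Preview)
Your proof is correct and follows essentially the same argument as the paper: separate $f_1(x)$ and $f_2(x)$ by disjoint opens using Hausdorffness, intersect their preimages to get a nonempty open $U$, and derive a contradiction from the fact that the dense set $Y$ must meet $U$. The only cosmetic difference is that the paper phrases the contradiction as ``$f_1$ and $f_2$ disagree on all of $U$, yet $U \cap Y \neq \varnothing$,'' while you pick a specific $y \in U \cap Y$; these are the same step.
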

\begin{proof}
Suppose that $f_1(x) \neq f_2(x)$ for some $x \in X$, then there exists open sets $U_1$ and $U_2$ of $Z$ containing $f_1(x)$ and $f_2(x)$ respectively that is disjoint. Define $U = f_1^{-1}(U_1) \cap f_2^{-1}(U_2)$ which is open by continuity. We have that $x \in U$, so $U$ is not empty. Also, for any $x' \in U$, 
\[
	f_1(x') \in U_1 \text{ and } f_2(x') \in U_2 \Rightarrow f_1(x') \neq f_2(x')
\]
as $U_1 \cap U_2$ is empty. Thus $f_1$ and $f_2$ disagrees in all $U$, however, because $U \cap Y \neq \varnothing$, this leads to a contradiction.
\end{proof}
Let $(X, \tau)$ be a topological space and $\sim$ be an equivalence relation on $X$. Then we let $Y = X/\sim$ and give the topology on $Y$ defined by
\[
	\tau_Y = \{ U \subset Y ~|~ p^{-1}(U) \in \tau \}
\]
where $p: X \rightarrow Y$ is the canonical epimorphism. Or equivalently, 
\[
	\tau_Y = \{ U \subset Y ~|~ \bigcup_{[x] \in U} [x] \in \tau \}
\]
The space $(Y, \tau_Y)$ will be called the \red{quotient space} of $X$ and $\tau_Y$ will be called the \red{quotient topology}.
\\ \indent Let $X_i$ be a topological space and let
\[
	X = \prod_{i \in I} X_i
\]
for some indexing set $I$ with canonical projections $p_i:X \rightarrow X_i$. The \red{product topology} of $X$ is defined to be the coarsest topology for which all $p_i$ are continuous. This means that open sets of the form 
\[
	p_i^{-1}(U_i) = U_i \times \prod_{j \in I, j \neq i} X_j
\]
must be in the product topology. As a topology is closed under union and finite intersection, we can see that all open sets are unions of the form 
\[
	\prod_{i \in S} U_i \times \prod_{i \notin S} X_i
\]
where $S \subset I$ is a finite subset. The finiteness of $S$ comes from the fact that the topology is only closed under finite intersection.
\\ \indent Let $X$ be a topological space and $K \subset X$ be a subset of $X$.  Then $\{ U_\alpha \} \subset \tau$ is called an \red{open cover} for $K$ if 
\[
	K \subset \bigcup U_\alpha
\]
and we say that $\{ U_\alpha \}$ \red{covers} $K$. An \red{open subcover} of $\{ U_\alpha \}$ is simply a subset that covers $K$. The space $X$ is called \red{compact} if for any collection of open sets $\{ U_\alpha \}$ which covers $X$, i.e.
\[
	X = \bigcup U_\alpha
\]
there exists a finite collection $U_1, \ldots, U_n \in \{ U_\alpha \}$. Similarly, a subset $K \subset X$ is called \red{compact} if for any open cover for $K$, there exists a finite open subcover for $K$. Compactness is a global property in the sense that compact set is compact space in its own right. We give the following characterization.
\begin{prp}  $K$ is a compact subset of $X$ if and only if $K$ is a compact space as a subspace of $X$.
\end{prp}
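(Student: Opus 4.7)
The plan is to use the defining relationship between the subspace topology on $K$ and the topology on $X$: a set $W \subset K$ is open in the subspace topology if and only if $W = U \cap K$ for some $U$ open in $X$. Once this translation is set up, each direction of the biconditional amounts to passing an open cover through this correspondence.

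For the forward direction, I would assume $K$ is compact as a subset of $X$ and take an arbitrary open cover $\{W_\alpha\}$ of $K$ in the subspace topology. I would lift each $W_\alpha$ to an open set $U_\alpha \subset X$ with $W_\alpha = U_\alpha \cap K$. Since the $W_\alpha$ cover $K$, so do the $U_\alpha$ (viewed as subsets of $X$). By hypothesis there is a finite subcollection $U_1, \ldots, U_n$ still covering $K$, and then the corresponding $W_1, \ldots, W_n$ form a finite subcover in the subspace topology.

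For the reverse direction, I would assume $K$ is compact as a space in its own right and start with an open cover $\{U_\alpha\}$ of $K$ by open sets of $X$. Intersecting with $K$ gives an open cover $\{U_\alpha \cap K\}$ in the subspace topology, from which compactness of $K$ (as a space) extracts a finite subcover $U_1 \cap K, \ldots, U_n \cap K$. The corresponding $U_1, \ldots, U_n$ then cover $K$ as a subset of $X$.

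There is no real obstacle here; the only thing to be careful about is to keep straight which notion of ``open cover'' is in use at each step, since the definition of compactness in the excerpt is stated once for a space and once for a subset, and the proof is essentially the observation that these two formulations are exchanged by the map $U \mapsto U \cap K$ between open sets of $X$ and open sets of the subspace $K$.
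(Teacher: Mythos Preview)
Your proposal is correct and matches the paper's proof essentially line for line: both directions pass between open covers in $X$ and open covers in the subspace topology via the correspondence $U \mapsto U \cap K$, then extract a finite subcover using the assumed form of compactness. There is nothing to add.
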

\begin{proof}
	Let $K \subset X$ be a compact subset, and let $\{ U'_\alpha \}$ be an open cover for $K$ in the subspace topology. we have $U'_\alpha = U_\alpha \cap K$ for some open set $U_\alpha$ in $X$. So we see that $\{ U_\alpha \}$ is an open cover of $K$ in $X$. There exists a finite subcover $\{ U_i \}_{i=1}^n$ by compactness. $\{ U_i \cap K \}_{i=1}^n$ gives the desired finite subcover of $\{ U'_i \}_{i=1}^n$.
    \\ \indent Conversely, let $K$ be a compact subspace of $X$.  If $\{ U_\alpha \}$ is an open cover for $K$ with $U_\alpha \subset X$ open, $\{ U_\alpha \cap K \}$ is an open cover in the subspace topology and induces a finite subcover $\{ U_i \cap K \}_{i=1}^n$ in the subspace topology. $\{ U_i \}_{i=1}^n$ is a finite subcover of $\{ U_\alpha \}$ for $K$.
\end{proof}
\begin{prp} Let $S$ be a closed subset of a compact set $K$, then $S$ is also compact.
\end{prp}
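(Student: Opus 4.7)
The plan is to use the standard ``complement trick'': enlarge a given open cover of $S$ to an open cover of $K$ by adjoining the open complement of $S$, apply compactness of $K$, and then discard the complement to obtain a finite subcover of $S$.

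First I would clarify the ambient space. The statement ``closed subset of a compact set $K$'' most naturally means $S$ is closed in $K$ with its subspace topology. By the preceding characterization of compactness (compact subset iff compact subspace), I may treat $K$ as a compact topological space in its own right and show that $S \subset K$ is a compact subspace. So the setup I would work with is: $K$ is a compact space, $S \subset K$ is closed, and I want to show $S$ is compact.

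Next I would take an arbitrary open cover $\{ U_\alpha \}$ of $S$ by sets open in $K$. Since $S$ is closed in $K$, the complement $K \setminus S$ is open in $K$. Adjoining it to the cover yields
\[
    K = (K \setminus S) \cup \bigcup_{\alpha} U_\alpha,
\]
an open cover of $K$. By compactness of $K$, this admits a finite subcover, say $K \setminus S, U_{\alpha_1}, \ldots, U_{\alpha_n}$ (where the term $K \setminus S$ may or may not actually appear). Since $K \setminus S$ contributes no points of $S$, the sets $U_{\alpha_1}, \ldots, U_{\alpha_n}$ alone still cover $S$, giving the desired finite subcover drawn from the original $\{U_\alpha\}$.

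There is no real obstacle here; the only subtle point worth flagging is the setup paragraph, namely verifying that it is legitimate to pass between ``open in $K$'' and ``open in $X$'' using the previous proposition, so that the complement $K \setminus S$ is genuinely open in the space where we apply compactness. Once that is squared away, the argument is a one-line application of the defining property of compactness.
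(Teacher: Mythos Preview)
Your proposal is correct and follows essentially the same approach as the paper: adjoin the open complement $S^c$ to a given open cover of $S$, extract a finite subcover of $K$ by compactness, and discard $S^c$. The only difference is that you are more explicit about the ambient-space setup via the preceding proposition, which the paper leaves implicit.
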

\begin{proof} Let $\{ U_ \alpha \}$ be an open cover for $S$, then  $\{ U_ \alpha \} \cup \{ S^c \}$ is an open cover for $K$. Then there exists $\{ U_i \}_{i=1}^n \cup \{ S^c \}$ a finite subcover. Because $S^c \cap S = \varnothing$, $\{ U_i \}_{i=1}^n$ gives a subcover for $S$.
\end{proof}
\begin{prp} Let $X$ be a compact space, then all its quotient space is also compact.
\end{prp}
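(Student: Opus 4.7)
The plan is to exploit the fact that the canonical projection $p: X \to Y = X/\sim$ is both continuous and surjective, and then show the general principle that continuous surjective images of compact spaces are compact. The definition of the quotient topology given just before the statement makes continuity of $p$ essentially tautological: a set $U \subset Y$ is declared open precisely when $p^{-1}(U)$ is open in $X$, so $p^{-1}$ sends open sets to open sets. Surjectivity is immediate from $p$ being the canonical projection onto equivalence classes.

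With $p$ in hand, I would take an arbitrary open cover $\{U_\alpha\}$ of $Y$ and pull it back. First I would check that $\{p^{-1}(U_\alpha)\}$ is an open cover of $X$: each $p^{-1}(U_\alpha)$ is open by continuity, and every $x \in X$ lies in some $p^{-1}(U_\alpha)$ because $p(x) \in Y = \bigcup U_\alpha$. Then I would invoke compactness of $X$ to extract a finite subcover $\{p^{-1}(U_i)\}_{i=1}^n$ with $X = \bigcup_{i=1}^n p^{-1}(U_i)$.

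Finally I would push this finite subcover back down to $Y$. Using surjectivity of $p$, any $y \in Y$ can be written as $y = p(x)$ for some $x \in X$; since $x \in p^{-1}(U_i)$ for some $i$, we have $y = p(x) \in U_i$, so $\{U_i\}_{i=1}^n$ is the desired finite subcover of $\{U_\alpha\}$.

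There is no real obstacle here; the argument is essentially a one-line application of the fact that continuous surjections preserve compactness, and the only content is recognising that the quotient map satisfies these two properties by construction. If one wanted to avoid invoking surjectivity explicitly, one could equivalently observe that $p(X) = Y$ and that $p$ being continuous already forces $p(X)$ to be compact, of which our conclusion is then a special case.
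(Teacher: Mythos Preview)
Your proposal is correct and follows essentially the same approach as the paper: pull back an open cover of $Y$ along the quotient map to get an open cover of $X$, extract a finite subcover by compactness, and use surjectivity of the quotient map to conclude that the corresponding finitely many $U_i$ cover $Y$. The paper's version is terser (it simply notes $q(q^{-1}(U_i)) = U_i$ by surjectivity), but the content is identical.
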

\begin{proof} Let $Y$ be a quotient space of $X$, and let $\{ U_\alpha \}$ be a open cover for $Y$, $\{ q^{-1}(U_\alpha) \}$ is an open cover of $X$. Hence there exists a finite subcover $\{ q^{-1}(U_i) \}_{i=1}^n$. So, by surjectivity, $\{ q(q^{-1}(U_i)) \}_{i=1}^n = \{ U_i \}_{i=1}^n$ is a finite subcover of $\{ U_\alpha \}$.
\end{proof}
\begin{prp} Let $X$ be a discrete and compact space, then $X$ has only finitely many points.
\end{prp}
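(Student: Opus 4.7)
The plan is to use the open cover consisting of singletons and extract a finite subcover by compactness. Since $X$ is discrete, by the discussion preceding Proposition 1.4 (that a point is isolated iff its singleton is open in the discrete topology), each $\{x\}$ for $x \in X$ is an open subset of $X$. Therefore the family
\[
	\mathcal{U} = \{ \{x\} ~|~ x \in X \}
\]
is an open cover of $X$, since every point lies in its own singleton.

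Next I would apply compactness to $\mathcal{U}$ to obtain a finite subcover $\{x_1\}, \ldots, \{x_n\}$. The key observation is that this particular cover is minimal in a strong sense: the singleton $\{x\}$ is the only member of $\mathcal{U}$ that contains $x$. Consequently, if $\{\{x_1\}, \ldots, \{x_n\}\}$ covers $X$, then every point of $X$ must equal one of the $x_i$, giving $X = \{x_1, \ldots, x_n\}$.

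There is no real obstacle here; the proof is essentially a one-line argument once one notices that singletons are open. The only subtlety worth a sentence is justifying why discreteness gives openness of singletons, which was already recorded in the text preceding the statement, so I would just cite that remark and conclude.
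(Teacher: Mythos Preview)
Your proposal is correct and follows exactly the same approach as the paper: take the open cover by singletons (open since $X$ is discrete), extract a finite subcover by compactness, and conclude that $X$ equals this finite set of points. The paper's proof is essentially a one-sentence version of what you wrote.
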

\begin{proof} If $X$ is endowed with a discrete topology, then collection of all singletons $\{ x \}$ for $x \in X$ is an open cover of $X$. As there exists a finite subcover, we immediately get $X$ is finite.
\end{proof}
\begin{thm}[Tychonoff] Let $\{ K_\alpha \}$ be any collection of compact spaces $K_\alpha$, then 
\[
	\prod K_\alpha
\]
is a compact space.
\end{thm}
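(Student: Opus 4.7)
The plan is to work with the dual, finite-intersection-property formulation of compactness: a space $X$ is compact if and only if every collection $\mathcal{C}$ of closed subsets of $X$ with the finite intersection property (FIP) satisfies $\bigcap \mathcal{C} \neq \varnothing$. This equivalence comes from taking complements in the open-cover definition, so I would record it as a routine preliminary and then reduce Tychonoff to showing that in $X = \prod_\alpha K_\alpha$ every FIP family of subsets has a common point in the intersection of their closures.

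First I would invoke Zorn's lemma to extend any FIP collection of subsets of $X$ to a maximal FIP collection $\mathcal{M}$. Maximality will supply two features that carry the rest of the argument: (a) $\mathcal{M}$ is closed under finite intersections, and (b) any subset of $X$ that meets every member of $\mathcal{M}$ already belongs to $\mathcal{M}$. Both are verified by noting that enlarging $\mathcal{M}$ in either fashion produces a strictly larger collection still enjoying the FIP, contradicting maximality.

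Next, for each index $\alpha$ the projected family $\{ p_\alpha(M) : M \in \mathcal{M} \}$ inherits the FIP in $K_\alpha$, so compactness of $K_\alpha$ produces a point $x_\alpha \in \bigcap_{M \in \mathcal{M}} \overline{p_\alpha(M)}$. The candidate common point is $x = (x_\alpha)_\alpha$, and the remaining task is to show that $x \in \overline{M}$ for every $M \in \mathcal{M}$. A basic neighborhood of $x$ has the form $V = \bigcap_{i=1}^n p_{\alpha_i}^{-1}(U_i)$ with $U_i$ an open neighborhood of $x_{\alpha_i}$; each $U_i$ meets $p_{\alpha_i}(M)$ since $x_{\alpha_i} \in \overline{p_{\alpha_i}(M)}$, hence $p_{\alpha_i}^{-1}(U_i)$ meets every $M \in \mathcal{M}$. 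By (b) each $p_{\alpha_i}^{-1}(U_i)$ lies in $\mathcal{M}$, and by (a) so does their finite intersection $V$; in particular $V \cap M \neq \varnothing$ for every $M$, which is exactly $x \in \overline{M}$.

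I expect the main obstacle to be the setup of the maximal FIP collection via Zorn's lemma and the clean extraction of properties (a) and (b); once these are in hand, the matching of the product-topology basis to the FIP of $\mathcal{M}$ is essentially forced, because the finiteness built into the basic open sets of the product topology is exactly what the FIP is designed to exploit. A secondary caveat is the reliance on the axiom of choice in Zorn's lemma, which is unavoidable since Tychonoff's theorem is equivalent to it.
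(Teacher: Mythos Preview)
Your proof is the standard Cartan--Bourbaki argument via maximal FIP families, and it is correct as outlined; the only point I would tighten is that in step (b) you should check explicitly that adjoining a set meeting every member of $\mathcal{M}$ preserves the FIP for \emph{finite} intersections drawn from the enlarged family, which follows once (a) is in place.

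As for comparison with the paper: the paper does not prove Tychonoff's theorem at all. It is stated as a black box in the point-set topology preliminaries and then invoked later (for the compactness of $\prod K_v$, of $\mathcal{O}_v$ inside $\prod A_m$, and in the restricted-product discussion). So your proposal supplies strictly more than the paper does. Given that the survey treats Tychonoff as background, this is a reasonable editorial choice on the paper's part; your write-up would be appropriate for an appendix if a self-contained treatment were desired.
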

Let $X$ be a topological space, then $X$ is called \red{locally compact} if every point of $X$ has a compact neighborhood. A compact space $X$ is also locally compact, because for all point $x \in X$, the space $X$ becomes the compact neighborhood of $x$.
\begin{prp} Let $X_1, \ldots, X_n$ be locally compact spaces, then 
\[
	X = \prod_{i=1}^n X_i
\]
is a locally compact space.
\end{prp}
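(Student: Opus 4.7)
The plan is to show that an arbitrary point $x = (x_1, \ldots, x_n) \in X$ admits a compact neighborhood, which is exactly what is required for local compactness. The building blocks will be the compact neighborhoods in each factor, together with Tychonoff's theorem (applied to a finite family) to control their product.

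First, for each index $i$, local compactness of $X_i$ furnishes a compact set $K_i \subset X_i$ containing an open set $U_i$ with $x_i \in U_i \subset K_i$. The natural candidate for a compact neighborhood of $x$ in $X$ is then $K = \prod_{i=1}^n K_i$, with the distinguished open set inside it being $U = \prod_{i=1}^n U_i$. I would next verify that $U$ is genuinely open in the product topology: by the description of basic open sets given earlier, a set of the form $\prod_{i \in S} U_i \times \prod_{i \notin S} X_i$ with $S \subset I$ finite is open, and here $S = \{1, \ldots, n\}$ is the whole (finite) index set, so $U$ qualifies. Clearly $x \in U \subset K$.

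Finally, $K$ is compact as a finite product of compact spaces by Tychonoff's theorem. Hence $K$ is a compact neighborhood of $x$, and since $x \in X$ was arbitrary, $X$ is locally compact.

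The only subtle point is the appeal to the openness of the full product $\prod U_i$, which relies crucially on the finiteness of the index set; for an infinite product this step would fail, which is why the proposition is restricted to finitely many factors. Beyond that, everything reduces to invoking already-established results, so I do not anticipate any real obstacle.
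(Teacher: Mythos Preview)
Your proof is correct and follows essentially the same approach as the paper: take a compact neighborhood $K_i \supset U_i \ni x_i$ in each factor, then $\prod U_i$ is open (by finiteness of the index set) and $\prod K_i$ is compact (by Tychonoff), giving the required compact neighborhood. Your remark on why finiteness is essential also mirrors the paper's commentary immediately following the proof.
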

\begin{proof} Let $(x_i)_{i=1}^n$ be a point in $X$, then for each $x_i$, there exists a compact neighborhood $V_i$ and an open set $U_i$ containing $x_i$ such that $U_i \subset V_i$. Then
\[
	(x_i) \in \prod U_i \subset \prod V_i
\]
then $\prod V_i$ is compact by Tychonoff's theorem and $\prod U_i$ is open in the product topology as it is a finite product. Hence $(x_i)$ has a compact neighborhood.
\end{proof}
Infinite product of locally compact spaces is not locally compact in general. The above proof goes wrong as infinite product $\prod U_i$ of open sets are not in general open. By \cite{TaoBlog}, we have that the only locally compact Hausdorff topological vector space are finite-dimensional. Infinite product of Hausdorff topological space over real cannot be finite-dimensional, so we see that it will never be locally compact. To resolve this problem, we will later introduce the restricted product. 
\\ \indent Let $G$ be a group which is endowed with a topology. If the multiplication $* : G \times G \rightarrow G$ is continuous with $G \times G$ given a product topology and the inversion ${\color{white}x}^{-1} : G \rightarrow G$ is continuous, then $G$ is called a \red{topological group}. If $R$ is a ring endowed with a topology such that the addition $+: R \times R \rightarrow R$, negation $-:R \rightarrow R$, and multiplication $*:R \times R \rightarrow R$ are continuous then $R$ is called a \red{topological ring}. Finally, a field $F$ is called a \red{topological field} if $F$ is a topological ring and, in addition, ${\color{white}x}^{-1}: F^\times \rightarrow F^\times$ is continuous. In short, a topological algebraic structure is a topological space if all its operations are continuous.
\\ \indent We now give an example of a locally compact topological field $\mathbb{R}$ constructed from $\mathbb{Q}$ and will prove the basic properties of $\mathbb{R}$.
\section{Construction of $\mathbb{R}$}
One of the famous methods in completing the rational numbers $\mathbb{Q}$ is to use Dedekind cuts, but in this survey we will use Cauchy sequences to fill in the gaps in the rational numbers. The following procedure generalizes to number fields and will be use to construct ideles and adeles. Rational numbers will be denoted $a,b$ to be distinguished with \emph{real numbers} soon to be defined. The \red{absolute value} $|{\color{white}x}| : \mathbb{Q} \rightarrow \mathbb{Q}^{\geq 0}$ is defined by
\[
	|a| := \left \{ \begin{array}{lcl} a & \text{ if } & a \geq 0 \\
    								   -a & \text{ if } & a < 0
                    \end{array}
                    \right .
\]
The absolute value satisfies the following well-known properties
\begin{prp} Let $|{\color{white}x}|$ be the absolute value of $\mathbb{Q}$ defined above, then
	\begin{enumerate}
    	\item $|a| \geq 0$ and $|a|=0$ if and only if $a=0$,
        \item $|ab| = |a||b|$,
        \item $|a+b| \leq |a| + |b|$ (Triangle inequality)
    \end{enumerate} \label{absprp}
\end{prp}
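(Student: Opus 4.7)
The plan is to verify each of the three properties directly from the piecewise definition of $|\cdot|$, with everything reducing to case analysis on the signs of $a$ and $b$, together with the ordered-field axioms of $\mathbb{Q}$ (in particular that $a \geq 0 \iff -a \leq 0$, and that the product of two elements of the same sign is non-negative).

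For (1), the non-negativity is immediate: if $a \geq 0$, the defining branch yields $|a| = a \geq 0$; if $a < 0$, then $-a > 0$, so $|a| = -a > 0$. The ``iff'' part is then clear, because in the first branch $|a| = a$ vanishes exactly when $a = 0$, and in the second branch $|a| = -a$ is strictly positive.

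For (2), I would split into the four sign combinations for $(a,b)$. When $a, b \geq 0$, both sides equal $ab$. When $a, b \leq 0$, we have $ab \geq 0$, so $|ab| = ab = (-a)(-b) = |a||b|$. When $a \geq 0$ and $b \leq 0$ (and symmetrically), we have $ab \leq 0$, so $|ab| = -(ab) = a(-b) = |a||b|$. The boundary case $a=0$ or $b=0$ gives $0$ on both sides and is compatible with either branch.

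For (3), the cleanest route is to first observe the inequalities $a \leq |a|$ and $-a \leq |a|$, both of which are trivial from the definition. Adding the first of these for $a$ and $b$ gives $a+b \leq |a|+|b|$, and adding the second gives $-(a+b) \leq |a|+|b|$. Since $|a+b|$ equals either $a+b$ or $-(a+b)$, it is bounded above by $|a|+|b|$ in either case. None of these steps is a real obstacle; the only mild nuisance is making sure the case splits in (2) are exhaustive and that the proof of (3) avoids having to split into six or more sub-cases, which is exactly what the $\pm a \leq |a|$ trick circumvents.
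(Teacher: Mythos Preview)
Your proof is correct. The paper itself does not prove this proposition at all; immediately after stating it the author writes ``We omit the tedious proof.'' Your case analysis for (1) and (2) and the $\pm a \leq |a|$ trick for (3) are exactly the standard verifications one would supply, so there is nothing to compare against and nothing missing on your end.
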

We omit the tedious proof. A sequence $\{ a_n \}_{n=1}^\infty$ or simply $\{ a_n \}$ in $\mathbb{Q}$ is called a \red{Cauchy sequence} if for any $\varepsilon \in \mathbb{Q}^{> 0}$, there exists a positive integer $N \in \mathbb{N}$ satisfying
\[
	|a_n- a_m| < \varepsilon \text{ whenever } n, m \geq N
\]
The terms in a Cauchy sequence become closer together as the indices increase; however, there are Cauchy sequences that do not converge in a \emph{non-complete} spaces. The construction of $\mathbb{R}$ starts by constructing the ring of Cauchy sequence and embedding the rationals to the ring of Cauchy sequence of $\mathbb{Q}$. Then we consider the quotient ring which turns out to field by taking the quotient of some \emph{nice} sequences. 
\begin{lem} Every Cauchy sequence $\{ a_n \}$ is bounded, i.e. there exists $M \in \mathbb{Q}^{\geq 0}$ such that $|a_n| \leq M$ for all $n$.
\end{lem}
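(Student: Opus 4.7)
The plan is to use the Cauchy condition once with a fixed $\varepsilon$, obtaining uniform control on all but finitely many terms of the sequence, and then absorb the remaining finitely many terms into a maximum. This is the standard tail-plus-initial-segment split that works in any normed setting, and here relies only on the three properties of $|\cdot|$ listed in Proposition~\ref{absprp}.

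Concretely, I would first invoke the Cauchy property with $\varepsilon = 1 \in \mathbb{Q}^{>0}$ to produce some $N \in \mathbb{N}$ such that $|a_n - a_m| < 1$ whenever $n, m \geq N$. Fixing $m = N$ and applying the triangle inequality in the form $|a_n| \leq |a_n - a_N| + |a_N|$ then yields $|a_n| < 1 + |a_N|$ for every $n \geq N$. This handles the infinite tail with a single bound.

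The finitely many indices $n < N$ must be dealt with separately, but here there is nothing to do beyond taking a maximum: set
\[
M := \max\bigl(|a_1|, |a_2|, \ldots, |a_{N-1}|,\, 1 + |a_N|\bigr),
\]
which lies in $\mathbb{Q}^{\geq 0}$ as a finite maximum of nonnegative rationals. Then $|a_n| \leq M$ holds for every $n$, by construction for $n < N$ and by the tail estimate combined with $|a_N| \leq 1 + |a_N|$ for $n \geq N$.

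There is no real obstacle here; the only subtlety worth flagging is that the choice $\varepsilon = 1$ is arbitrary and any fixed positive rational would do, and that the argument uses exclusively the triangle inequality and the existence of a maximum of a finite set of rationals, so the proof transports verbatim to any absolute value on any field — which is convenient for the generalizations to completions of number fields envisioned later in the survey.
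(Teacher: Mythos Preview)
Your proof is correct and is essentially identical to the paper's: both pick $\varepsilon = 1$, use the triangle inequality $|a_n| \leq |a_n - a_N| + |a_N|$ to bound the tail $n \geq N$, and then take a maximum over the finitely many initial terms. The only cosmetic difference is that the paper writes $M = \max\{|a_n| : n \leq N\} + 1$ whereas you write $M = \max(|a_1|, \ldots, |a_{N-1}|, 1 + |a_N|)$, but these serve the same purpose.
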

\begin{proof} Let $\varepsilon = 1$, then there exists $N$ such that $n \geq N$ implies that $|a_n-a_N| < \varepsilon$. We let $M = \max \{ |a_n| ~|~ n \leq N \} + 1$, then 
\[
	\begin{array}{lclc}
    	|a_n| & \leq & M & \text{ if } n < N \\
        |a_n| & \leq & |a_n - a_N| + |a_N| & \\ 
        & < & 1 + (M-1) & \\
        & = & M & \text { if } n \geq N
    \end{array}
\]
\end{proof}
\begin{prp} \label{cauchyring} Let $\{ a_n \}$ and $\{ b_n \}$ be Cauchy sequences, then so are $\{ a_n + b_n \}$ and $\{ a_n b_n\}$.
\end{prp}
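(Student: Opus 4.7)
The plan is to verify the Cauchy condition for each of the two sequences directly from the definition, using the triangle inequality from Proposition \ref{absprp} and, for the product case, the boundedness lemma just established.

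For the sum $\{a_n+b_n\}$, I would start with an arbitrary $\varepsilon \in \mathbb{Q}^{>0}$ and split it as $\varepsilon/2 + \varepsilon/2$. By the Cauchy property of $\{a_n\}$ and $\{b_n\}$, I can choose indices $N_1,N_2$ so that $|a_n-a_m|<\varepsilon/2$ for $n,m\geq N_1$ and $|b_n-b_m|<\varepsilon/2$ for $n,m\geq N_2$. Taking $N=\max(N_1,N_2)$ and applying the triangle inequality to $(a_n-a_m)+(b_n-b_m)$ yields the desired bound. This part is routine and does not require the boundedness lemma.

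The main obstacle is the product $\{a_nb_n\}$, since $|a_nb_n-a_mb_m|$ does not factor directly into differences of the original sequences. Here I would use the standard add-and-subtract trick: write
\[
a_nb_n - a_mb_m = a_n(b_n-b_m) + b_m(a_n-a_m),
\]
apply the triangle inequality together with multiplicativity of $|\cdot|$, and obtain
\[
|a_nb_n - a_mb_m| \leq |a_n|\,|b_n-b_m| + |b_m|\,|a_n-a_m|.
\]
This is where the preceding lemma becomes essential: it provides bounds $M_a,M_b \in \mathbb{Q}^{\geq 0}$ with $|a_n|\leq M_a$ and $|b_n|\leq M_b$ for all $n$. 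Setting $M = \max(M_a, M_b) + 1$ (the $+1$ just to ensure $M>0$ when choosing denominators), I would, given $\varepsilon>0$, pick $N_1$ so that $|a_n-a_m|<\varepsilon/(2M)$ and $N_2$ so that $|b_n-b_m|<\varepsilon/(2M)$ for indices beyond them, and then combine. The only subtlety to flag is the edge case where one sequence is identically zero, in which case $M_a$ or $M_b$ could be zero; handling this with the strict inequality $M>0$ above sidesteps dividing by zero without clutter.

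Overall the proof is a classical exercise: the sum case is a direct triangle-inequality argument, while the product case is the same argument dressed up with the add-subtract identity and the boundedness lemma. No deeper machinery is needed, and no appeal to completeness of $\mathbb{Q}$ is made, which is important since the whole purpose of this section is precisely that $\mathbb{Q}$ is \emph{not} complete.
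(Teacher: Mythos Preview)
Your proposal is correct and follows essentially the same route as the paper: the sum is handled by splitting $\varepsilon$ in half and the product by the add--subtract decomposition $a_nb_n-a_mb_m=a_n(b_n-b_m)+b_m(a_n-a_m)$ together with the boundedness lemma. Your extra care in forcing $M>0$ is a small refinement over the paper, which tacitly assumes its common bound $B$ is nonzero.
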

\begin{proof}
	For a fixed $\varepsilon \in \mathbb{Q}^{> 0}$, there exists $N_1, N_2$ such that
    \[
    	\begin{array}{ll}
        	|a_n - a_m| < \varepsilon/2 & \forall~n,m \geq N_1 \\
            |b_n - b_m| < \varepsilon/2 & \forall~n,m \geq N_2
        \end{array}
    \]
then if we let $N= \max(N_1, N_2)$, then 
\[
	|(a_n + b_n) - (b_m + b_m)| \leq |a_n - a_m| + |b_n - b_m| < \varepsilon/2 + \varepsilon/2 = \varepsilon
\]
hence sum of Cauchy sequences is a Cauchy sequence.
\\ \indent For the product, we have that $|a_n|, |b_n| \leq B$ for some bound $B$, and for a fixed $\varepsilon \in \mathbb{Q}^{> 0}$, there exists $N$ such that $|a_n -a_m|, |b_n - b_m | < \varepsilon/2B$ for all $n,m \geq N$. Then we get
\[
	\begin{array}{lcl}
    	|a_n b_n - a_m b_m| & = & |a_n b_n - a_n b_m +a_n b_m - a_m b_m | \\
        & \leq & |a_n| |b_n - b_m| + |b_m| |a_n - a_m | \\
        & \leq & B \cdot \dfrac{\varepsilon}{2B} + B \cdot \dfrac{\varepsilon}{2B} = \varepsilon
    \end{array}
\]
hence product of Cauchy sequences is a Cauchy sequence.
\end{proof}
The set of all Cauchy sequences $\mathcal{R}$ is a ring. In fact, $\mathcal{R}$ forms a subring of 
\[
	\prod_{i=1}^\infty \mathbb{Q}
\]
the countably infinite product of $\mathbb{Q}$. Let $A = \{ a_n \} \in \mathcal{R}$ such that
\[
	a_n = \left \{ \begin{array}{ll}
    	1 & \text{ if } n=1 \\
        0 & \text{ if } n \neq 1
    \end{array} \right .
\]
and $B = \{ b_n \} \in \mathcal{R}$ such that
\[
	b_n = \left \{ \begin{array}{ll}
    	1 & \text{ if } n=2 \\
        0 & \text{ if } n \neq 2
    \end{array} \right .
\]
$AB = 0 \in \mathcal{R}$, hence we see that $\mathcal{R}$ is not a domain.  
\\ \indent Let $\mathcal{N} = \{ A \in \mathcal{R} ~|~ a_n \rightarrow 0 \text{ as } n \rightarrow \infty \}$ be the set of all \red{null sequences}, a Cauchy sequence that converges to 0. 
\begin{prp} $\mathcal{N}$ is an ideal of $\mathcal{R}$
\end{prp}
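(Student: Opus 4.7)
The plan is to verify the two axioms of an ideal: that $\mathcal{N}$ is an additive subgroup of $\mathcal{R}$, and that it absorbs multiplication from $\mathcal{R}$. Both arguments will be straightforward $\varepsilon$-manipulations leveraging the triangle inequality and multiplicativity from Proposition \ref{absprp}, so the proof should mirror the structure of Proposition \ref{cauchyring}.

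First I would note that $\mathcal{N} \subset \mathcal{R}$ by definition (null sequences are Cauchy sequences), and that the zero sequence lies in $\mathcal{N}$. For additive closure, given $\{a_n\}, \{b_n\} \in \mathcal{N}$ and $\varepsilon \in \mathbb{Q}^{>0}$, I would choose $N_1, N_2$ so that $|a_n| < \varepsilon/2$ for $n \geq N_1$ and $|b_n| < \varepsilon/2$ for $n \geq N_2$; then for $n \geq \max(N_1, N_2)$ the triangle inequality gives $|a_n - b_n| \leq |a_n| + |b_n| < \varepsilon$. This shows $\{a_n - b_n\} \in \mathcal{N}$, so $\mathcal{N}$ is an additive subgroup.

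For absorption, let $\{a_n\} \in \mathcal{R}$ and $\{b_n\} \in \mathcal{N}$. By the boundedness lemma just proved, there exists $M \in \mathbb{Q}^{\geq 0}$ with $|a_n| \leq M$ for all $n$. If $M = 0$ the product is already the zero sequence, so assume $M > 0$. Given $\varepsilon \in \mathbb{Q}^{>0}$, choose $N$ such that $|b_n| < \varepsilon/M$ for all $n \geq N$; then by multiplicativity of the absolute value, $|a_n b_n| = |a_n||b_n| \leq M \cdot \varepsilon/M = \varepsilon$ for all $n \geq N$, so $\{a_n b_n\} \in \mathcal{N}$.

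No step here is genuinely hard; the only place where one might stumble is the absorption step, which requires the previous boundedness lemma rather than any property of null sequences themselves. This is why the lemma was stated first, and invoking it is the key ingredient that distinguishes this from a triviality.
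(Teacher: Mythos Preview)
Your proof is correct and follows essentially the same approach as the paper: an $\varepsilon/2$ triangle-inequality argument for additive closure and the boundedness lemma plus multiplicativity for absorption. The only cosmetic differences are that you verify the subgroup criterion via $\{a_n - b_n\}$ rather than $\{b_n + c_n\}$, and you explicitly dispose of the $M=0$ case (which in fact cannot occur, since the boundedness lemma constructs $M \geq 1$).
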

\begin{proof} Let $B, C \in \mathcal{N}$, and $A \in \mathcal{R}$, with $B= \{ b_n \}$ and $C = \{ c_n \}$, then for any given $\varepsilon \in \mathbb{Q}^{> 0}$, there exist $N \in \mathbb{N}$ such that $n \geq N$ implies that
\[
	\begin{array}{lcl}
    	|b_n| & < & \varepsilon/2 \\ 
        |c_n| & < & \varepsilon/2
    \end{array}
\]
hence $|b_n + c_n| \leq |b_n| + |c_n| < \varepsilon/2 + \varepsilon/2 = \varepsilon$. Hence $B+C \in \mathcal{N}$. 
\\ \indent Also, let $A \in \mathcal{R}$, there exists $M \in \mathbb{Q}^{\geq 0}$ which bounds $|a_n|$, and there exists $N \in \mathbb{N}$ such that $|b_n| < \varepsilon/M$ whenever $n \geq N$, then for $n \geq N$,
\[
	\begin{array}{lcl}
    	|a_nb_n | & = & |a_n||b_n| \\
        & \leq & M |b_n| \\
        & < & \varepsilon
    \end{array}
\]
This shows that $AB \in \mathcal{N}$, so $\mathcal{N}$ is an ideal of $\mathcal{R}$.
\end{proof}
Let $\mathbb{R} = \mathcal{R}/\mathcal{N}$, then we would like to discuss the nonzero element in $\mathbb{R}$. We will use $x,y \in \mathbb{R}$ for the elements and $x = [A]$ will denote the equivalence relation of $A \in \mathcal{R}$. If $A=\{ a_n \}$, then we agree that $[a_n] := [A]$. Let $[A] \in \mathbb{R}$ be nonzero, then $A$ is a Cauchy sequence that does not converge to $0$. By taking the converse of the definition of convergence, we have 
\begin{center}
There exists $\varepsilon \in \mathbb{Q}^{> 0}$ such that for all $N \in \mathbb{N}$, there exists $M \geq N$ such that
$|a_M| \geq \varepsilon$. ($\dagger$)
\end{center} 
\begin{prp} \label{inversenull} The composition
\[
	\mathbb{Q} \hookrightarrow \mathcal{R} \twoheadrightarrow \mathbb{R}
\]
is an inclusion and $\mathbb{R}$ is a field.
\end{prp}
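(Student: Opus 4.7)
The plan is to handle the two claims separately. The inclusion is easy, and the field property reduces, via the observation $(\dagger)$, to constructing a term-wise inverse sequence which turns out to be Cauchy.

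For the inclusion, I would note that the map $\mathbb{Q} \to \mathcal{R}$ sends $a$ to the constant sequence $(a,a,a,\ldots)$, which is clearly Cauchy. The composition with $\mathcal{R} \twoheadrightarrow \mathbb{R}$ is injective iff the only constant sequence in $\mathcal{N}$ is the zero one. But $(a,a,\ldots) \to 0$ forces $|a| < \varepsilon$ for every $\varepsilon \in \mathbb{Q}^{>0}$, hence $a = 0$.

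For the field property, the main task is to invert an arbitrary nonzero class $[A]$ with $A = \{a_n\}$. The key step uses $(\dagger)$ to bound $A$ away from zero eventually: fix an $\varepsilon$ as in $(\dagger)$, use the Cauchy condition to choose $N_0$ so that $|a_n - a_m| < \varepsilon/2$ for all $n,m \geq N_0$, and then pick $M \geq N_0$ with $|a_M| \geq \varepsilon$. A triangle-inequality computation
\[
    |a_n| \geq |a_M| - |a_n - a_M| > \varepsilon - \varepsilon/2 = \varepsilon/2
\]
gives $|a_n| > \varepsilon/2$ for all $n \geq N_0$. Now define $A' = \{a'_n\}$ by $a'_n = a_n$ for $n \geq N_0$ and $a'_n = 1$ for $n < N_0$; then $A - A'$ is eventually zero, hence in $\mathcal{N}$, so $[A'] = [A]$, and every $a'_n$ is nonzero.

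The candidate inverse is $B = \{1/a'_n\}$. The remaining obstacle, which is the only nontrivial step, is verifying $B \in \mathcal{R}$. For $n,m \geq N_0$ one has $|a'_n a'_m| \geq \varepsilon^2/4$, so
\[
    \left| \frac{1}{a'_n} - \frac{1}{a'_m} \right| = \frac{|a'_m - a'_n|}{|a'_n a'_m|} \leq \frac{4}{\varepsilon^2} |a_n - a_m|,
\]
which can be made smaller than any prescribed positive rational by increasing $N_0$ if necessary, using the Cauchy property of $A$. Hence $B$ is Cauchy. Finally $A'B$ is the constant sequence $\{1\}$, so $[A][B] = [A'][B] = [1]$, exhibiting $[B]$ as the inverse of $[A]$. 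Since $\mathcal{R}$ is already a commutative ring (by Proposition \ref{cauchyring} together with the obvious ring axioms inherited from $\prod \mathbb{Q}$), this makes $\mathbb{R}$ a field.
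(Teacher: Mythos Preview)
Your proof is correct and follows essentially the same approach as the paper: bound a nonzero representative away from zero eventually via $(\dagger)$ and the Cauchy condition, modify finitely many terms to avoid division by zero, then verify the termwise inverse is Cauchy using the lower bound. The only cosmetic differences are that the paper uses $\varepsilon$ rather than $\varepsilon/2$ in the Cauchy step (yielding the lower bound $|a_M|-\varepsilon$) and sets the early terms of the inverse sequence to $0$ rather than replacing early terms of $A$ by $1$; your version is arguably cleaner on both counts.
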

\begin{proof} Let $[a_n] \in \mathbb{R}$ nonzero. Fix $\varepsilon \in \mathbb{Q}^{> 0}$, then $\{ a_n \}$ Cauchy implies that there exists $N$ such that $|a_n - a_m| < \varepsilon$ whenever $n,m \geq N$. By ($\dagger$), there exists $M \geq N$ with $|a_M| > \varepsilon$. If we let $n \geq M$, then 
\[
	\bigl| |a_n| - |a_M| \bigr| \leq |a_n - a_M| < \varepsilon
\]
implies that $- \varepsilon < |a_n| - |a_M| \Rightarrow 0 < |a_M| - \varepsilon < |a_n|$. ($\dagger \dagger$) Hence there exists $N$ such that $|a_n| > 0$ for all $n \geq N$.
\\ \indent Let $[b_n]$ with $b_n = 0$ for $n < M$ and $b_n = a_n^{-1}$ for $n \geq M$. Then we get $[a_n][b_n] = [1]$. But we must first prove that $\{ b_n \}$ is a Cauchy sequence. By the notation above, for all $n \geq M$, we get
\[
	|a_M| - \varepsilon < |a_n| \Rightarrow \dfrac{1}{|a_n|} < \dfrac{1}{|a_M|- \varepsilon}
\]
Let $B = 1/(|a_M| - \varepsilon)$, then for a fixed $\gamma \in \mathbb{Q}^{> 0}$, there exists $N \geq M$, such that
\[
	|a_n - a_m| < \gamma/B^2 \text{ whenever } n,m \geq N
\]
which implies
\[
	|a_n^{-1} - a_m^{-1}| = \dfrac{|a_n - a_m|}{|a_n||a_m|} < \dfrac{\gamma}{B^2} \cdot B^2 = \gamma
\]
This shows that $\{ b_n \}$ is a Cauchy sequence. We have shown that every nonzero element in $\mathbb{R}$ has a multiplicative inverse, hence $\mathbb{R}$ is a field once we prove that $[1] \neq [0]$. But clearly $\{1\}$ does not converge to $0$. Finally for any $a,b \in \mathbb{Q}$, 
\[
	b \neq a \Rightarrow a-b \neq 0 \Rightarrow \{ a-b \} \not\rightarrow 0 \Rightarrow [x-y] \neq 0   
\]
proves the injectivity of the composition.
\end{proof}
We have extended $\mathbb{Q}$ into a larger field $\mathbb{R}$ using the absolute value $|{\color{white}x}|$. The absolute value can be also extended canonically to a function of $\mathbb{R}$ as follows,
\[
	|{\color{white}x}|: \mathbb{R} \rightarrow \mathbb{R}
\]
defined by 
\[
	[a_n] \mapsto [|a_n|]
\]
The inverse triangle inequality
\[
	\bigl| |a_n| - |a_m| \bigr| \leq |a_n-a_m|
\]
shows that $\{ |a_n| \}$ is a Cauchy sequence. Also let $\{a_n\}$ be a null sequence, then $[|a_n|] = [0]$ as $||a_n|| = |a_n|$, so the map is well-defined.We want to check that the absolute value extended to $\mathbb{R}$ satisfies the three properties stated in \cref{absprp}. However, we have not defined an order in $\mathbb{R}$. Suppose $x = [a_n] \in \mathbb{R}$ nonzero, then we know that $|a_n| - a_n = 0$ or $-2a_n$ and that there exists $M$ such that $B \leq |a_n|$ for some $B \in \mathbb{Q}^{\geq 0}$ and for $n \geq M$ by ($\dagger \dagger$) in \Cref{inversenull}. Let $\varepsilon < 2B$, then there exists $N \geq M$ such that
\[
	 \bigl| (|a_n|-a_n)  - (|a_m| - a_m) \bigr| < \varepsilon \text{ whenever } n, m \geq N
\]
as $\{ |a_n| -a_n \}$ is also a Cauchy sequence. If $|a_n|-a_n = 0$ and $|a_m| - a_m = -2a_m$, then we get
\[
    2B \leq |-2|a_m|| < \varepsilon < 2B
\] 
leads to a contradiction. Hence this shows that $n \geq N$, $|a_n| - a_n = 0$ or $|a_n| - a_n = -2a_n$. In particular, $|x| = x$ or $|x|=-x$ for $x \in \mathbb{R}$. Thus we have that the absolute value of $\mathbb{R}$ is analogous to the case of $\mathbb{Q}$. We denote
\[
	\mathbb{R}^{\geq 0}:= \{ x \in \mathbb{R} ~|~ |x| = x \}
\]
and it isn't hard to see that $\mathbb{Q}^{\geq 0} \subset \mathbb{R}^{\geq 0}$. We define $\mathbb{R}^{>0}$ similarly, and define elements in $\mathbb{R}^{>0}$ to be \red{positive} and elements in $\mathbb{R}^{<0}$ to be \red{negative}. From now on, when we say the absolute value of $\mathbb{R}$, then we mean $|{\color{white}x}| : \mathbb{R} \rightarrow \mathbb{R}^{\geq 0}$ such that
\[
	|x| = \left \{ \begin{array}{lcl}
    	x & \text{ if } & x \in \mathbb{R}^{\geq 0} \\
        -x & \text{ if } & x \in \mathbb{R}^{< 0}
    	\end{array} \right .
\]
Now we say $x \leq y$ if $y-x \in \mathbb{R}^{\geq 0}$. Observe that if $x=[a_n]$ and $y=[b_n]$, then $x \leq y$ is equivalent to saying that there exists $N$ such that whenever $n \geq N$,
\[
    a_n - b_n \leq 0
\]
With this order, we can prove that the new absolute value $|{\color{white}x}|$ satisfies the three properties of an absolute value, and $d(x,y):=|x-y|$ defines a metric on $\mathbb{R}$. Cauchy sequence and convergence of $\mathbb{R}$ is analogous to the case of $\mathbb{Q}$.
\begin{prp} $\mathbb{Q}$ is a dense subfield of $\mathbb{R}$.
\end{prp}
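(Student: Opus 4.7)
The plan is to verify density directly from the definition given in the text: for any $x \in \mathbb{R}$, either $x$ already lies in $\mathbb{Q}$ (and there is nothing to prove) or every neighborhood of $x$ must contain a rational point, which will automatically differ from $x$. Since the topology on $\mathbb{R}$ is the metric topology induced by $d(x,y) = |x-y|$, it suffices to produce, for each $\varepsilon \in \mathbb{R}^{>0}$, a rational $q$ with $|x - q| < \varepsilon$.

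The first step is to dominate $\varepsilon$ by a positive rational. Writing $\varepsilon = [c_n]$, the argument $(\dagger)$--$(\dagger\dagger)$ that appears in the proof of \Cref{inversenull} produces a $B \in \mathbb{Q}^{>0}$ and an index beyond which $c_n \geq B$; translated through the order criterion on $\mathbb{R}$ (comparing the representing sequences eventually), this yields $B \leq \varepsilon$, so the rational $\varepsilon' := B/2$ satisfies $0 < \varepsilon' < \varepsilon$. The second step is then the Cauchy property of a representative $x = [a_n]$: pick $N$ so large that $|a_n - a_m| < \varepsilon'$ for all $n, m \geq N$, regard the fixed rational $a_N$ as an element of $\mathbb{R}$ via the inclusion of \Cref{inversenull}, and observe that $x - a_N$ is represented by $\{a_n - a_N\}_n$, whose absolute value is eventually bounded by $\varepsilon'$. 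The order criterion then gives $|x - a_N| \leq \varepsilon' < \varepsilon$, which is the required rational approximation.

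The only step that is not a routine unwrapping of definitions is the first one, where one passes from an arbitrary real $\varepsilon > 0$ down to a strictly smaller positive rational. This is an Archimedean-type statement about $\mathbb{R}$, but it is already packaged into the proof of \Cref{inversenull}; once it is in hand, the density assertion collapses to the tautology that any Cauchy sequence of rationals approximates its own equivalence class in $\mathbb{R}$.
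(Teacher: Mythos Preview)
Your argument is correct and its heart is exactly the paper's: once $x=[a_n]$ is written as a class of a Cauchy sequence, the Cauchy condition itself furnishes the rational approximant $a_N$. The one difference is that the paper simply takes $\varepsilon\in\mathbb{Q}^{>0}$ from the outset (and remarks afterward that this suffices), whereas you start with $\varepsilon\in\mathbb{R}^{>0}$ and invoke the $(\dagger\dagger)$ bound from \Cref{inversenull} to pass to a smaller rational $\varepsilon'$; your route is slightly longer but also slightly more self-contained, and it is careful to write $|x-a_N|\le\varepsilon'<\varepsilon$ rather than the strict inequality the paper asserts directly.
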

\begin{proof} This proposition explains the motivation of the construction of $\mathbb{R}$ using Cauchy sequences. Given $x = [a_n] \in \mathbb{R}$ and $\varepsilon \in \mathbb{Q}^{> 0}$, there exists $N$ such that
\[
    |a_n - a_m| < \varepsilon \text{ whenever } n, m \geq N
\]
In particular,
\[
	|a_n - a_N| < \varepsilon \text{ whenever } n \geq N
\]
Exactly says that
\[
    |x - [a_N]| < [\varepsilon] \text{ in } \mathbb{R}
\]
This shows that $\mathbb{Q}$ is dense.
\end{proof}
The proposition above shows that the statements of the form ``For a fixed $\varepsilon \in \mathbb{Q}^{> 0}$'' can be replaced by ``For a fixed $\varepsilon \in \mathbb{R}^{> 0}$'' by denseness. A field is \red{Dedekind complete} if it satisfies the least upper bound property and is \red{Cauchy complete} if all Cauchy sequence converges. We will first prove that $\mathbb{R}$ is Cauchy complete and prove that it is Dedekind complete.
\begin{thm} $\mathbb{R}$ is Cauchy complete.
\end{thm}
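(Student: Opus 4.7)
The plan is to take a Cauchy sequence $\{x_n\}$ in $\mathbb{R}$ and construct its limit as the equivalence class of a Cauchy sequence in $\mathbb{Q}$, using that $\mathbb{Q}$ is dense in $\mathbb{R}$ (the preceding proposition). So the first step is: for each $n$, invoke density to pick a rational $a_n \in \mathbb{Q}$ with $|x_n - a_n| < 1/n$, where I regard $a_n$ as an element of $\mathbb{R}$ via the canonical embedding from \Cref{inversenull}.

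Next, I would verify that $\{a_n\}$ is a Cauchy sequence in $\mathbb{Q}$. Given $\varepsilon \in \mathbb{Q}^{>0}$, choose $N$ large enough that $|x_n - x_m| < \varepsilon/3$ for $n,m \geq N$ (possible since $\{x_n\}$ is $\mathbb{R}$-Cauchy) and also $1/N < \varepsilon/3$; then the triangle inequality
\[
|a_n - a_m| \leq |a_n - x_n| + |x_n - x_m| + |x_m - a_m| < \tfrac{1}{n} + \tfrac{\varepsilon}{3} + \tfrac{1}{m} < \varepsilon
\]
does the job. Thus $x := [a_n] \in \mathbb{R}$ is well-defined.

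The last step is to show $x_n \to x$ in $\mathbb{R}$. Fix $\varepsilon \in \mathbb{R}^{>0}$; by density of $\mathbb{Q}$, it suffices to handle rational $\varepsilon$. Pick $N$ so that $1/N < \varepsilon/2$ and $|a_n - a_m| < \varepsilon/2$ for $n,m \geq N$. The subtle point is interpreting $|a_n - x|$ with $x = [a_m]_m$: unpacking the embedding and the absolute value on $\mathbb{R}$, this equals the class of the $\mathbb{Q}$-sequence $\{|a_n - a_m|\}_m$, which is eventually bounded by $\varepsilon/2$, so $|a_n - x| \leq \varepsilon/2$ in $\mathbb{R}$. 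Combining with $|x_n - a_n| < 1/n \leq \varepsilon/2$ gives $|x_n - x| < \varepsilon$ for $n \geq N$.

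The main obstacle is the last bookkeeping step: the element $x$ lives in $\mathbb{R}$ as an equivalence class of rational sequences, while $x_n$ is abstract, so one has to carefully unwind how the extended absolute value $|\cdot|: \mathbb{R}\to\mathbb{R}$ behaves on a difference of the form ``embedded rational minus equivalence class'' in order to convert the $\mathbb{Q}$-Cauchy estimate for $\{a_m\}$ into the desired $\mathbb{R}$-estimate $|a_n - x| \leq \varepsilon/2$. Once that translation is in hand, the rest is just the triangle inequality.
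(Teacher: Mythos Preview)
Your proposal is correct and follows essentially the same route as the paper: approximate the given $\mathbb{R}$-Cauchy sequence by rationals using density, check the rationals form a $\mathbb{Q}$-Cauchy sequence, and take its equivalence class as the limit. The only cosmetic difference is that the paper isolates the observation ``$\{[a_n]\}\to[a_n]$ for any rational Cauchy sequence'' as a preliminary remark, whereas you unwind that same fact inline when estimating $|a_n-x|$.
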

\begin{proof} Let $\{ a_n \} \in \mathcal{R}$, then we have $\{ [a_n] \}$ a Cauchy sequence in $\mathbb{R}$. The definition of Cauchy sequence immediately implies that $\{ [a_n] \} \rightarrow [a_n]$. Hence all rational Cauchy sequence converges in $\mathbb{R}$. Let $\{ x_n \}$ be a Cauchy sequence in $\mathbb{R}$, then there exists $r_n \in \mathbb{Q}$ such that 
\[
	|x_n - r_n| < 1/n
\]
by denseness, hence $\lim_{n \rightarrow \infty} x_n - r_n = 0$. The convergence of $\{ r_n \}$ implies the convergence of $\{ x_n \}$.
\end{proof}
As the purpose of this survey is not introducing analysis, we assume properties such as the least upper bound property, archimedean properties, and Heine-Borel, and other standard results from analysis.
\begin{thm} \label{Rlocallycompact} $\mathbb{R}$ is a locally compact topological space.
\end{thm}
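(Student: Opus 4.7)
The plan is to exhibit, for each $x \in \mathbb{R}$, an explicit compact neighborhood, and thereby verify local compactness directly from the definition. The natural candidate is a closed and bounded interval containing $x$ in its interior, for instance $V = [x-1, x+1] := \{ y \in \mathbb{R} ~|~ |y-x| \leq 1 \}$. First I would verify that $V$ really is a neighborhood of $x$ in the metric topology induced by $d(y,z) = |y-z|$: the open ball $B(x,1) = \{ y \in \mathbb{R} ~|~ |y-x| < 1 \}$ is open by the standard check via the triangle inequality (given $y \in B(x,1)$, any $z$ with $|z-y| < 1 - |y-x|$ lies in $B(x,1)$), and $x \in B(x,1) \subset V$, so $V$ contains an open set about $x$.

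The remaining content is the compactness of $V$. Since $V$ is closed and bounded in $\mathbb{R}$, this is precisely the Heine--Borel theorem, which the paper has explicitly declared as a standing assumption from standard analysis. Invoking it supplies the required compact neighborhood, and since $x$ was arbitrary, every point of $\mathbb{R}$ has a compact neighborhood; this is the definition of local compactness.

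There is no real obstacle here: once Heine--Borel is granted, the theorem is almost immediate. The only minor verifications that deserve mention are (i) that the absolute value on $\mathbb{R}$ does define a topology (open balls are open), which follows from the triangle inequality already recorded in the extension of $|\cdot|$ from $\mathbb{Q}$ to $\mathbb{R}$, and (ii) that $V$ is closed, which follows either from its description as the complement of the open sets $\{y : y < x-1\} \cup \{y : y > x+1\}$ or from the continuity of $y \mapsto |y-x|$ together with the fact that $[0,1]$ is closed. Both are routine and do not require any new ideas beyond what has been developed in the excerpt.
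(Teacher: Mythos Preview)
Your argument for local compactness is essentially identical to the paper's: the paper writes the one line ``By Heine--Borel theorem, $x \in B(x,1) \subset \overline{B(x,1)}$, hence $\mathbb{R}$ is a locally compact space,'' and your closed interval $[x-1,x+1]$ is exactly $\overline{B(x,1)}$. So for the stated conclusion you and the paper agree completely.

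There is, however, a mismatch worth flagging. Despite the theorem being stated only for ``locally compact topological \emph{space},'' the paper's proof immediately continues: ``So we prove that the four operations $+, -, *, {}^{-1}$ are continuous,'' and then spends several paragraphs verifying the continuity of each field operation. In other words, the paper is really proving that $\mathbb{R}$ is a locally compact topological \emph{field}, and this is what is invoked later (e.g.\ in the proof that $k_v$ is a locally compact topological field, which explicitly refers back to \Cref{Rlocallycompact}). Your proposal addresses only the literal statement and omits the continuity verifications entirely. That is not a mathematical error on your part, but if you are matching the paper's intent you should also supply the $\varepsilon$-arguments for continuity of addition, negation, multiplication, and inversion.
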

\begin{proof} By Heine-Borel theorem, $x \in B(x, 1) \subset \overline{B(x,1)}$, hence $\mathbb{R}$ is a locally compact space. So we prove that the four operations $+, -, *, {\color{white}x}^{-1}$ is continuous.
\\ \indent Let $f = +$, and $U = B(g,r)$. We fix $(a,b) \in f^{-1}(U)$ which shows that $|g-a-b| =r' < r$. There exists $\varepsilon > 0$ such that $r'+\varepsilon < r$. Now consider two open sets
\[
	U_1 = B(a,\varepsilon/2) \text{ and } U_2 = B(b, \varepsilon/2)
\]
Let $(c,d) \in U_1 \times U_2$, then 
\[
	\begin{array}{lcl}
    	|g-c-d| & = & |g-a-b +a + b -c -d| \\
        & \leq & |g-a-b| +|a+b-c-d| \\
        & < & r' + \varepsilon/2 + \varepsilon/2 =  r' + \varepsilon < r
    \end{array}
\]
shows that $(a,b) \in U_1 \times U_2 \subset f^{-1}(U)$. $f^{-1}(U)$ is open and $+$ is continuous.
\\ \indent Let $f = -$, then for $B(x,r)$,
\[
	\begin{array}{lcl}
    	f^{-1}(B(x,r)) & = & \{ y \in \mathbb{R} ~|~ -y \in B(x,r) \} \\
        & = & \{ y \in \mathbb{R} ~|~ |-y -x| < r \} \\
        & = & \{ y \in \mathbb{R} ~|~ |(-x) - y| <r \} \\
        & = & B(-x,r)
    \end{array}
\]
hence the preimage is open, and $-$ is continuous.
\\ \indent Let $f=*$ and $U=B(g,r)$ and fix $(a,b) \in f^{-1}(U)$. Let $|g-ab| = r'$ and we have $\varepsilon > 0$ such that $r'+\varepsilon < r$. We let $\varepsilon_1 = \min( \sqrt{\varepsilon}/\sqrt{3}, \varepsilon/3|b|)$ and $\varepsilon_2 = \min(\sqrt{\varepsilon}/\sqrt{3}, \varepsilon/3|a|)$. Consider two open sets
\[
	U_1 = B(a, \varepsilon_1) \text{ and } U_2 = B(b, \varepsilon_2)
\]
then for all $(c,d) \in U_1 \times U_2$, we have $c = a+ \gamma_1$ and $d= b+ \gamma_2$ with $|\gamma_i| < \varepsilon_i$. We then have
\[
	\begin{array}{lcl}
    	|ab-cd| & = & |ab-(a+\gamma_1)(b+\gamma_2)| \\
        & = & |-a\gamma_2 -b\gamma_1 - \gamma_1\gamma_2| \\
        & \leq & |a\gamma_2| + |b\gamma_1| + |\gamma_1 \gamma_2| \\
        & < & |a| \dfrac{\varepsilon}{3|a|} + |b| \dfrac{\varepsilon}{3|b|} + \dfrac{\sqrt{\varepsilon}}{\sqrt{3}} \dfrac{\sqrt{\varepsilon}}{\sqrt{3}} \\ 
        & = & \dfrac{\varepsilon}{3} + \dfrac{\varepsilon}{3} + \dfrac{\varepsilon}{3} = \varepsilon.
    \end{array}
\]
We conclude by observing that
\[
	|g-cd| = |g-ab+ab-cd| \leq |g-ab| + |ab-cd| < r' + \varepsilon < r
\]
which proves that $f^{-1}(U)$ is open.
\\ \indent Let $f = {\color{white}~}^{-1}: \mathbb{R}^* \rightarrow \mathbb{R}^*$ and $U=B(x,r)$, then fix $y \in f^{-1}(U)$, the consider
\[
	\gamma = \min \left ( \dfrac{|y|}{2}, \dfrac{|y|^2}{2} \varepsilon \right )
\]
with open set $B(y, \gamma)$. Similarly from above, we let $|x-y^{-1}| = r'$ and let $\varepsilon \in \mathbb{R}^{>0}$ such that $r'+\varepsilon < r$. Fix $z \in B(y, \gamma)$, then we have that
\[
	\bigl | |z|-|y| \bigr | \leq |z-y| < \dfrac{|y|}{2}
\]
implies that 
\[
	|y| - |z| < \dfrac{y}{2} \Rightarrow \dfrac{1}{2} |y| < |z| \Rightarrow \dfrac{1}{|z|} < \dfrac{2}{|y|}
\] 
The above inequality not gives us
\[
	\begin{array}{lcl}
    	|x-z^{-1}| & = & | x-y^{-1} + y^{-1} - z^{-1}| \\
        & \leq & |x - y^{-1}| + \dfrac{|y-z|}{|y||z|} \\
        &  < & r' + \left ( \dfrac{2}{|y|} \right ) \left ( \dfrac{|y|^2}{2} \varepsilon \right ) \left ( \dfrac{1}{|y|} \right )  \\
        & = & r' + \varepsilon < r
    \end{array}
\]
\end{proof}
\newpage
\part{Local Fields}
\begin{center}
\begin{tikzpicture}
	\tikzstyle{every node}=[font=\Large]
	\matrix (m)[matrix of math nodes, nodes={anchor=center}, row sep=40pt, column sep=20pt]{
    	& & & \mathcal{R} & \mathcal{R}/\mfk{N} \\
        k & & k & & k_\mfk{p} \\
        \mathcal{O}_k & & \mathcal{O}_\mfk{p} & & \mathcal{O}_v \\
        \mathcal{O}_k/\mfk{p}^n & & \mathcal{O}_\mfk{p}/(\pi^n) & & \mathcal{O}_v/\mfk{p}_v^n \\
    };
    	\draw[->]
        	(m-2-3) edge (m-1-4)
            (m-1-4) edge (m-1-5)
            (m-4-1) edge node[above]{$\sim$} (m-4-3)
            (m-4-3) edge node[above]{$\sim$} (m-4-5);
        \draw[transform canvas={yshift=1.5pt}]
            (m-2-1) edge (m-2-3);
        \draw[transform canvas={yshift=-1.5pt}]
            (m-2-1) edge (m-2-3);
        \draw[transform canvas={xshift=1.5pt}]
        	(m-1-5) edge (m-2-5);
        \draw[transform canvas={xshift=-1.5pt}]
        	(m-1-5) edge (m-2-5);    
        \draw[right hook->]
        	(m-3-1) edge (m-3-3)
            (m-3-3) edge (m-3-5)
            (m-2-3) edge (m-2-5);
        \draw[-]
        	(m-2-1) edge node[right]{\normalsize q.f.} (m-3-1)
            (m-2-3) edge node[right]{\normalsize q.f.} (m-3-3)
            (m-2-5) edge node[right]{\normalsize q.f.} (m-3-5);
        \draw[->>]
        	(m-3-1) edge (m-4-1)
            (m-3-3) edge (m-4-3)
            (m-3-5) edge (m-4-5);
        \draw[dashed,->]
        	(m-3-5) edge (m-4-3);
        \node[] at (0.7, -1.5) {\normalsize algebraic};
        \node[] at (0.7, -1.8) {\normalsize completion};
        \node[] at (2.5, -2.2) {\normalsize inverse};
        \node[] at (2.5, -2.5) {\normalsize limit};
        \node[] at (2.1, 2.1) {\normalsize topological};
        \node[] at (2.1, 1.8) {\normalsize completion};
\end{tikzpicture}
\end{center}
$k$: number field
\\ $\mathcal{O}_k$: the ring of integers of $k$
\\ $\mathcal{O}_\mfk{p}$: the valuation ring of $k$ with respect to $v_\mfk{p}$ or localization $(\mathcal{O}_k)_\mfk{p}$
\\ $\mathcal{O}_v$: the inverse limit of $\mathcal{O}_\mfk{p}/(\pi^n)$, or the valuation ring of $k_\mfk{p}$ 
\\ $\mathcal{R}$: ring of cauchy sequence of $k$,
\\ $\mfk{N}$: maximal ideal of $R$ consisting of null sequences
\\ $k_\mfk{p}$: completion of $k$ with respect to $v_\mfk{p}$
\\ $\mfk{p}$: prime ideal of $\mathcal{O}_k$
\\ $\pi$: prime element of $\mathcal{O}_\mfk{p}$
\\ $\mfk{p}_v$: the unique maximal ideal of $\mathcal{O}_v$

%
%
%
%

\section{Abstract Valuation}
A \red{valuation} of the field $F$ is a function $\varphi: F \rightarrow \mathbb{R}^{\geq 0}$ satisfying
\begin{enumerate}
	\item $\varphi(x)=0 \Leftrightarrow x = 0$,
    \item $\varphi(xy) = \varphi(x) \varphi(y)$,
    \item There exists $C$ such that for any $x \in F$,
    \begin{equation} \label{valdef3}
    	\varphi(x) \leq 1 \Rightarrow \varphi(1+x) \leq C
	\end{equation}
\end{enumerate}
Let $\varphi$ be a valuation such that $\varphi(x)=0$ for $x=0$ and $\varphi(x) = 1$ for all $x \in F^*$, then $\varphi$ is called a \red{trivial valuation}. 
\\ \indent Any valuation $\varphi$ defines a homomorphism of $F^* \rightarrow \mathbb{R}^{>0}$. We have that $\varphi(1) = \varphi(1 \cdot 1) = \varphi(1) \varphi(1) \Rightarrow \varphi(1) = 1$. Also, $1= \varphi(x x^{-1}) = \varphi(x) \varphi(x^{-1})$. Dividing by $\varphi(x)$, $\varphi(x^{-1}) = \varphi(x)^{-1}$. Furthermore, $1= \varphi(-1)\varphi(-1) = \varphi(-1)^2$, then $\varphi(-1) = 1$. Any root of unity in $\mathbb{C}$ is of the form
\[
	\cos(2\pi m/n) + i \sin(2\pi m/n)
\]
where $1 \leq m \leq n$. The number of the form above is real if and only if $\sin(2 \pi m/n) = 0$ if and only if $\cos(2 \pi m/n) = \pm 1$. Therefore, if $\zeta$ is a root of unity in $F$, $\varphi(\zeta) = 1$. 
\\ \indent Let $F$ be a finite field, then for nonzero $a \in F$, there exists $n \neq m$ positive integers such that $x^n = x^m$. However, $\varphi(x) >0$ with $\varphi(x)^n \neq \varphi(a)^m$ whenever $n \neq m$. Hence, only valuation of a finite field $F$ is the trivial valuation.
\begin{prp} (\ref{valdef3}) can be replaced by
\begin{equation}
	\varphi(x+y) \leq C \max(\varphi(x), \varphi(y))
\end{equation}
\end{prp}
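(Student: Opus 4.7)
The plan is to prove the two inequalities are equivalent by showing each implies the other, noting that the new condition is visibly stronger so the interesting direction is $(\ref{valdef3}) \Rightarrow$ new.

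First I would dispose of the easy direction: if $\varphi(x+y) \leq C\max(\varphi(x),\varphi(y))$ holds for all $x,y \in F$, then specializing to $y=1$ and using $\varphi(1)=1$ gives, for any $x$ with $\varphi(x)\leq 1$, that $\varphi(1+x) \leq C\max(1,\varphi(x)) = C$, which is exactly (\ref{valdef3}).

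For the converse, I would first observe that $C\geq 1$: setting $x=0$ in (\ref{valdef3}) gives $\varphi(1)\leq C$, i.e. $1 \leq C$. Then the case $y=0$ is trivial since $\varphi(x+0)=\varphi(x)\leq C\varphi(x)=C\max(\varphi(x),\varphi(0))$. For the main case, assume $x,y$ are both nonzero and, by the symmetry $\varphi(x+y)=\varphi(y+x)$, arrange without loss of generality that $\varphi(x)\leq \varphi(y)$, so $\max(\varphi(x),\varphi(y))=\varphi(y)$. The trick is to normalize by $y$: since $\varphi$ is multiplicative on $F^*$, $\varphi(x/y)=\varphi(x)/\varphi(y)\leq 1$, so (\ref{valdef3}) gives $\varphi(1+x/y)\leq C$. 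Multiplying through by $\varphi(y)$ yields
\[
\varphi(x+y)=\varphi(y)\,\varphi(1+x/y) \leq C\,\varphi(y) = C\max(\varphi(x),\varphi(y)),
\]
as required.

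I do not expect any real obstacle; the only subtlety worth flagging is the legitimacy of the normalization trick, which hinges on $y\neq 0$ (handled separately) and on the multiplicativity $\varphi(x/y)=\varphi(x)/\varphi(y)$, which is immediate from axiom (2) and the earlier observation that $\varphi$ restricts to a homomorphism $F^*\to \mathbb{R}^{>0}$.
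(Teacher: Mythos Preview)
Your proof is correct and follows essentially the same approach as the paper: normalize by the element of larger valuation to reduce the two-variable inequality to the one-variable hypothesis, and specialize $y=1$ for the converse. You are in fact more careful than the paper in handling the edge case $y=0$ and in noting that $C\geq 1$.
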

\begin{proof}
	Without loss of generosity, assume that $\varphi(x) \leq \varphi(y)$, then $\varphi(x/y) \leq 1 \Rightarrow \varphi(x/y+1) \leq C$. Multiplying $\varphi(y)$ on both side we get
    \[
    	\varphi(x+y) \leq C\varphi(y) = C\max(\varphi(x), \varphi(y))
    \]
    The case when $\varphi(y) \leq \varphi(x)$ is exactly the same.
    \\ \indent Conversely, $\varphi(x) \leq 1$, then $\max(\varphi(x), \varphi(1)) = \varphi(1)$, so 
    \[
    	\varphi(x+1) \leq Cmax(\varphi(x), \varphi(1)) = C
    \]
\end{proof}
\begin{lem} \label{valalpha} If $\alpha \in \mathbb{R}^{>0}$ and $\varphi$ be a valuation of $F$, then $\varphi^\alpha$ is also a valuation of $F$.
\end{lem}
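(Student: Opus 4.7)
The plan is to verify the three defining axioms of a valuation for $\varphi^\alpha$ one at a time, leaning on the fact that the map $t \mapsto t^\alpha$ on $\mathbb{R}^{\geq 0}$ is strictly increasing, sends $0$ to $0$, fixes $1$, and is multiplicative.

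First, for axiom (i) I would observe that since $\alpha > 0$, $\varphi(x)^\alpha = 0$ if and only if $\varphi(x) = 0$, which by assumption on $\varphi$ is equivalent to $x = 0$. Second, for axiom (ii), the multiplicativity of $t \mapsto t^\alpha$ gives $\varphi^\alpha(xy) = (\varphi(x)\varphi(y))^\alpha = \varphi(x)^\alpha \varphi(y)^\alpha = \varphi^\alpha(x)\varphi^\alpha(y)$. These two steps are essentially formal.

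The only step requiring any thought is axiom (iii), where one must exhibit a suitable constant. Here I would take the $C$ afforded by $\varphi$ and claim $C^\alpha$ works for $\varphi^\alpha$. Indeed, if $\varphi^\alpha(x) \leq 1$, then because $t \mapsto t^\alpha$ is monotonic and fixes $1$, we get $\varphi(x) \leq 1$; thus $\varphi(1+x) \leq C$, and applying $t \mapsto t^\alpha$ again (monotonicity once more) yields $\varphi^\alpha(1+x) = \varphi(1+x)^\alpha \leq C^\alpha$. This is the only nontrivial point, but it is not really an obstacle, just a careful use of monotonicity.

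I do not anticipate any real difficulty; the proof is essentially three short verifications, and the main thing to state explicitly is the choice of constant $C^\alpha$ in axiom (iii). If one preferred to use the equivalent formulation $\varphi(x+y) \leq C \max(\varphi(x),\varphi(y))$ proved just above, the verification would look almost identical, raising both sides to the $\alpha$-th power.
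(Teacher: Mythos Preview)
Your proof is correct and follows essentially the same route as the paper: verify the three axioms directly, with the only substantive point being that the constant $C^\alpha$ works for axiom (iii) via the monotonicity of $t \mapsto t^\alpha$. The paper's argument is line-for-line the same chain of implications you wrote down.
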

\begin{proof} ~
	\begin{enumerate}
    	\item $\varphi^\alpha(x)=0 \Leftrightarrow (\varphi(x))^\alpha = 0 \Leftrightarrow \varphi(x) =0 \Leftrightarrow x=0$,
        \item $\varphi^\alpha(xy) = (\varphi(xy))^\alpha = (\varphi(x)\varphi(y))^\alpha = \varphi(x)^\alpha \varphi(y)^\alpha = \varphi^\alpha(x) \varphi^\alpha(y)$,
        \item Let $C \in \mathbb{R}$ such that
        \[
        	\varphi(x) \leq 1 \Rightarrow \varphi(x+1) \leq C
        \]
        then
        \begin{equation} \label{Salphabij}
        	\varphi^\alpha(x) \leq 1 \Rightarrow \varphi(x)^\alpha \leq 1 \Rightarrow \varphi(x) \leq 1 \Rightarrow \varphi(x+1) \leq C \Rightarrow \varphi(x+1)^\alpha \leq C^\alpha
        \end{equation}
        The last line shows that if $C$ is a real number number that satisfies (\ref{valdef3}) for $\varphi$, then $C^\alpha$ satisfies (\ref{valdef3}) for $\varphi^\alpha$.
    \end{enumerate}
\end{proof}
If $\varphi$ is a valuation of $F$, we define $S_\varphi$ to be the set of all real numbers satisfying the condition (\ref{valdef3}), i.e.
\[
	S_\varphi:= \{ C \in \mathbb{R} ~|~ \varphi(x) \leq 1 \Rightarrow \varphi(1+x) \leq C \text{ for all } x \in F\}
\]
Then $1=\varphi(1) = \varphi(1+0) \leq C$ shows that $1$ is a lower bound of $S_\varphi$. Thus we may define the \red{norm} of a valuation denoted
\[
	||\varphi|| = \inf{S_\varphi}
\]
\begin{prp} Let $C = ||\varphi||$, then $C \in S_\varphi$.
\end{prp}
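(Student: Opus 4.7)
The plan is to recognize that $S_\varphi$ is exactly the set of upper bounds of the set
\[
T := \{ \varphi(1+x) \mid x \in F,~ \varphi(x) \leq 1 \},
\]
so that $C = \inf S_\varphi$ coincides with $\sup T$, and a supremum is always itself an upper bound of its set. This immediately makes $C$ a witness to the valuation axiom.

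First I would note that $S_\varphi$ is nonempty (by the third axiom in the definition of a valuation, applied to $\varphi$ itself) and bounded below by $1$ (as remarked immediately before the statement, since $\varphi(1) = 1 \leq C'$ for every $C' \in S_\varphi$). Consequently $C = \inf S_\varphi$ exists as a real number by the least upper bound property of $\mathbb{R}$ established in the previous section.

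Then, to verify $C \in S_\varphi$, I would fix an arbitrary $x \in F$ with $\varphi(x) \leq 1$ and show $\varphi(1+x) \leq C$. For every $C' \in S_\varphi$, the defining condition of $S_\varphi$ gives $\varphi(1+x) \leq C'$. Thus the single real number $\varphi(1+x)$ is a lower bound for $S_\varphi$, and hence $\varphi(1+x) \leq \inf S_\varphi = C$. Since $x$ was an arbitrary element of $F$ with $\varphi(x) \leq 1$, this is exactly the assertion $C \in S_\varphi$.

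I do not foresee any real obstacle: the content of the proposition is just that the infimum of the set of upper bounds of $T$ is attained, which is automatic. The only care required is the order of quantifiers — one must fix $x$ first, and only then observe that the fixed real number $\varphi(1+x)$ is bounded above by every $C' \in S_\varphi$ and therefore by $\inf S_\varphi$; reversing the order would instead require an exchange of $\inf$ and $\sup$ that the argument does not need.
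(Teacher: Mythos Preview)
Your proof is correct and is essentially the same argument as the paper's: both fix $x$ with $\varphi(x)\le 1$ and conclude $\varphi(1+x)\le C$ from the fact that $\varphi(1+x)\le C'$ for every $C'\in S_\varphi$. The only cosmetic difference is that the paper phrases this via a sequence $D_n\in S_\varphi$ with $D_n\to C$ and passes to the limit, whereas you invoke the order-theoretic characterization of the infimum directly; your version is slightly cleaner but not a different idea.
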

\begin{proof}
	$C = ||\varphi||$ implies that there exists a sequence of numbers $D_n$ that converges to $C$. In fact, for each $n$, there exists $D_n \in S_\varphi$ such that $D_n - C < 1/n$, then $D_n \rightarrow C$. For any $\varphi(x) \leq 1$, we have that
    \[
    	\varphi(1+x) \leq D_n
    \]
then $n \rightarrow \infty$, we get
	\[
    	\varphi(1+x) \leq C
    \]
which exactly states that $C \in S_\varphi$.
\end{proof}
\begin{prp} For all $\alpha \in \mathbb{R}^{>0}$, $||\varphi^\alpha|| = ||\varphi||^\alpha$.
\end{prp}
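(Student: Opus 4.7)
The plan is to show that the map $C \mapsto C^\alpha$ is a bijection between $S_\varphi$ and $S_{\varphi^\alpha}$, and then use the monotonicity of $x \mapsto x^\alpha$ on $\mathbb{R}^{>0}$ to pass the infimum through the $\alpha$-th power.

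First I would establish the set equality $S_{\varphi^\alpha} = \{ C^\alpha : C \in S_\varphi \}$. The inclusion $\supseteq$ is already contained in the chain of implications (\ref{Salphabij}) from the proof of \Cref{valalpha}: if $C \in S_\varphi$, then $C^\alpha \in S_{\varphi^\alpha}$. For the reverse inclusion $\subseteq$, I would apply the same lemma with $\alpha$ replaced by $1/\alpha \in \mathbb{R}^{>0}$ to the valuation $\varphi^\alpha$. This shows that if $D \in S_{\varphi^\alpha}$, then $D^{1/\alpha} \in S_{(\varphi^\alpha)^{1/\alpha}} = S_\varphi$, so that $D = (D^{1/\alpha})^\alpha$ lies in the image $\{ C^\alpha : C \in S_\varphi \}$.

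Next I would take the infimum of both sides. Because $\alpha > 0$, the map $t \mapsto t^\alpha$ is a strictly increasing continuous bijection on $\mathbb{R}^{>0}$, so it carries infima to infima: for any nonempty set $T \subset \mathbb{R}^{>0}$ bounded below by $1$, we have $\inf \{ t^\alpha : t \in T \} = (\inf T)^\alpha$. Applied to $T = S_\varphi$, this yields
\[
    ||\varphi^\alpha|| = \inf S_{\varphi^\alpha} = \inf \{ C^\alpha : C \in S_\varphi \} = (\inf S_\varphi)^\alpha = ||\varphi||^\alpha,
\]
which is the desired identity.

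There is essentially no obstacle; the proof is a direct repackaging of \Cref{valalpha}. The only mildly subtle point is the reverse inclusion, where one must remember that \Cref{valalpha} applies to \emph{any} positive exponent, so it may be reused with exponent $1/\alpha$ to invert the construction.
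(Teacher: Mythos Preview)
Your proof is correct and follows essentially the same route as the paper: both establish the bijection $S_\varphi \to S_{\varphi^\alpha}$, $C \mapsto C^\alpha$, via (\ref{Salphabij}) applied with exponents $\alpha$ and $1/\alpha$, and then transport the infimum across. The only cosmetic difference is that the paper verifies directly that $\|\varphi\|^\alpha$ is a lower bound of $S_{\varphi^\alpha}$ and lies in it (using the earlier fact $\|\varphi\|\in S_\varphi$), whereas you invoke the order-preserving nature of $t\mapsto t^\alpha$ to commute the infimum with the $\alpha$-th power; these are interchangeable.
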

\begin{proof} If $C \in S_\varphi$, then by (\ref{Salphabij}), $C^\alpha \in S_{\varphi^\alpha}$. Hence we get a canonical bijection between $S_\alpha$ and $S_{\varphi^\alpha}$ by
\[
	C \mapsto C^\alpha
\]
\begin{enumerate}
	\item Let $||\varphi||=C$, for all $D \in S_{\varphi^\alpha}$, $D^{1/\alpha} \in S_\varphi \Rightarrow C^\alpha \leq D$, hence $C^\alpha$ is a lower bound of $S_{\varphi^\alpha}$.
    \item $C \in S_\varphi \Rightarrow S^\alpha \in S_{\varphi^\alpha}$.
\end{enumerate}
This concludes that $||\varphi||^\alpha = \inf{S_{\varphi^\alpha}} = ||\varphi^\alpha||$.
\end{proof}
\begin{lem} Let $\varphi$ be a valuation of $F$, then 
\[
	||\varphi|| \leq 2 \Rightarrow \varphi(x_1 + \cdots + x_n) \leq 2n \max(\varphi(x_1), \ldots, \varphi(x_n))
\]
\end{lem}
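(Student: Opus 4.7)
The plan is to combine the binary bound with a doubling trick. From the preceding proposition, the condition $\lVert \varphi \rVert \leq 2$ gives us the usable inequality
\[
    \varphi(x+y) \leq 2 \max(\varphi(x), \varphi(y))
\]
for all $x,y \in F$. I will first establish the bound when $n$ is a power of two, and then reduce the general case to this by padding with zeros.

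\textbf{Step 1 (powers of two by induction).} I will show by induction on $k \geq 0$ that
\[
    \varphi(x_1 + \cdots + x_{2^k}) \leq 2^k \max_{1 \leq i \leq 2^k} \varphi(x_i).
\]
The base case $k=0$ is immediate. For the inductive step, I split the sum of $2^{k+1}$ terms into its first and second halves, apply the binary inequality to the two halves, and then apply the inductive hypothesis to each half; the resulting factor becomes $2 \cdot 2^k = 2^{k+1}$.

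\textbf{Step 2 (reduction from $n$ to the next power of two).} Given an arbitrary $n$, I choose $k$ so that $2^{k-1} \leq n \leq 2^k$ (i.e., $k = \lceil \log_2 n \rceil$, with the trivial $n=1$ case handled separately). I then set $x_{n+1} = \cdots = x_{2^k} = 0$; since $\varphi(0) = 0$, padding leaves both the sum and the maximum of the $\varphi(x_i)$ unchanged. Applying Step 1 to the padded list of length $2^k$ gives
\[
    \varphi(x_1 + \cdots + x_n) \leq 2^k \max_{1 \leq i \leq n} \varphi(x_i) \leq 2n \max_{1 \leq i \leq n} \varphi(x_i),
\]
where the final inequality uses $2^k \leq 2n$, which follows directly from $n \geq 2^{k-1}$.

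There is no real obstacle here; the only subtle point is making sure the binary bound $C = 2$ is available, which is exactly the content of $\lVert \varphi \rVert \leq 2$ together with the fact (proved just above) that the infimum $\lVert \varphi \rVert$ is actually attained in $S_\varphi$. The rest is a clean doubling-and-padding argument that trades the tight power-of-two bound $2^k$ for the slightly weaker but uniform bound $2n$.
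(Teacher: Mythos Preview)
Your proof is correct and follows essentially the same route as the paper: establish the bound $2^k$ for sums of $2^k$ terms by repeatedly halving, then pad an arbitrary $n$-term sum with zeros up to the next power of two and use $2^k \leq 2n$. The only cosmetic difference is indexing (the paper writes $2^r \leq n < 2^{r+1}$ and pads to $2^{r+1}$, you write $2^{k-1} \leq n \leq 2^k$ and pad to $2^k$), which amounts to the same thing.
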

\begin{proof} $||\varphi|| \leq 2$ implies that $\varphi(x_1 + x_2) \leq 2\max(\varphi(x_1), \varphi(x_2))$, so by induction, we have 
\[
	\varphi(x_1 + x_2 + \cdots +x_r) \leq 2^r \max(\varphi(x_1), \ldots, \varphi(x_r))
\]
then for any $n$, we have a unique $r$ such that $2^r \leq n < 2^{r+1}$, then
\[
	\begin{array}{lcl}
    	\varphi(x_1+ \cdots + x_n) & = & \varphi(x_1+ \cdots + x_n + 0 + \cdots + 0) \\
        & \leq & 2^{r+1}\max(\varphi(x_1), \ldots, \varphi(x_n)) \\
        & \leq & 2n \max(\varphi(x_1), \ldots, \varphi(x_n))
    \end{array}
\]
where we have added $0$ in order to make it into $2^{r+1}$ terms.
\end{proof}
\begin{prp} Let $\varphi$ be a function from $F$ to $\mathbb{R}^{\geq 0}$ satisfying
\begin{enumerate}
	\item $\varphi(x) = 0 \Leftrightarrow x = 0$,
    \item $\varphi(xy) = \varphi(x)\varphi(y)$,
\end{enumerate}
then we have that
\[
	||\varphi|| \leq 2 \Leftrightarrow \varphi(x+y) \leq \varphi(x) + \varphi(y)
\]
\end{prp}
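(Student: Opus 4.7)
The easy direction is the backward implication. Assume the triangle inequality holds. Then for any $x$ with $\varphi(x) \leq 1$, the identity $\varphi(1) = 1$ (which follows from multiplicativity applied to $1 \cdot 1 = 1$) gives $\varphi(1+x) \leq \varphi(1) + \varphi(x) \leq 2$, so $2 \in S_\varphi$ and $\|\varphi\| \leq 2$. This takes only a line or two.

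The forward direction is the real content, and I plan to follow the classical Artin--Bourbaki strategy: use the preceding lemma (which gave $\varphi(x_1 + \cdots + x_n) \leq 2n \max_i \varphi(x_i)$ under the hypothesis $\|\varphi\| \leq 2$) to push a binomial expansion through and then extract the sharp inequality by taking an $n$-th root. First I would record the auxiliary estimate $\varphi(m) \leq 2m$ for every positive integer $m$, which comes from applying the lemma to the sum $1 + 1 + \cdots + 1$ ($m$ terms) and using $\varphi(1)=1$. By multiplicativity this also bounds $\varphi\bigl(\binom{n}{k}\bigr) \leq 2\binom{n}{k}$.

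The heart of the argument is then the binomial expansion
\[
    (x+y)^n = \sum_{k=0}^n \binom{n}{k} x^k y^{n-k},
\]
which has $n+1$ terms. Applying the lemma, multiplicativity, and the bound on $\varphi\bigl(\binom{n}{k}\bigr)$ in succession gives
\[
    \varphi(x+y)^n = \varphi\bigl((x+y)^n\bigr) \leq 2(n+1)\cdot 2 \sum_{k=0}^n \binom{n}{k}\varphi(x)^k \varphi(y)^{n-k} = 4(n+1)\bigl(\varphi(x)+\varphi(y)\bigr)^n,
\]
where I have crudely bounded the maximum by the full sum so that the binomial theorem applied to $\varphi(x)+\varphi(y)$ collapses it. Taking $n$-th roots yields
\[
    \varphi(x+y) \leq \bigl(4(n+1)\bigr)^{1/n}\bigl(\varphi(x)+\varphi(y)\bigr),
\]
and since $(4(n+1))^{1/n} \to 1$ as $n \to \infty$, the triangle inequality follows in the limit.

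The main obstacle is not any single step but rather trusting that the constants $2$ and $2(n+1)$ that appear do not obstruct the conclusion; this is precisely why the argument must be performed for $(x+y)^n$ rather than $x+y$, since the multiplicative constants are killed by the $n$-th root while the genuinely multiplicative factor $(\varphi(x)+\varphi(y))^n$ survives. The only standard analytic fact invoked is $c^{1/n} \to 1$ for any fixed $c > 0$ together with $(n+1)^{1/n} \to 1$, both of which are available from the background analysis the paper assumes.
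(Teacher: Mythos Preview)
Your proof is correct and follows essentially the same route as the paper: the backward direction is immediate, and the forward direction uses the preceding lemma on the binomial expansion of $(x+y)^n$, bounds $\varphi\bigl(\binom{n}{k}\bigr)\leq 2\binom{n}{k}$, replaces the maximum by the full sum to recover $(\varphi(x)+\varphi(y))^n$, and finishes by taking $n$-th roots and letting $n\to\infty$. The only cosmetic difference is that the paper phrases the easy direction as $\varphi(x+y)\leq 2\max(\varphi(x),\varphi(y))$ rather than checking $\varphi(1+x)\leq 2$ directly, but these are equivalent.
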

\begin{proof} If the triangle equality holds,
\[
	\varphi(x+y) \leq \varphi(x) + \varphi(y) \leq \max(\varphi(x), \varphi(y)) + \max(\varphi(x), \varphi(y)) = 2 \max(\varphi(x), \varphi(y))
\]
hence $||\varphi|| \leq 2$.
\\ \indent Suppose $||\varphi|| \leq 2$, then we observe
\[
	\begin{array}{lcl}
    	(\varphi(x+y))^n & = & \displaystyle \varphi \left ( x^n + \binom{n}{1} x^{n-1}y + \cdots + \binom{n}{n-1} xy^{n-1} + y^n \right ) \\ 
        & \leq & \displaystyle 2(n+1) \max \left ( \varphi \left ( \binom{n}{i} x^{n-i} y^i \right ) \right ) \\ 
        & \leq & \displaystyle 2(n+1) \max \left ( 2 \binom{n}{i} \varphi(x)^{n-i} \varphi(y)^i \right ) \\ 
        & \leq & \displaystyle 4(n+1) \sum_{i=1}^n \left ( \binom{n}{i} \varphi(x)^{n-i}\varphi(y)^i \right ) \\
        & = & 4(n+1) (\varphi(x)+\varphi(y))^n
    \end{array}
\]
We take the $n^{\text{th}}$ root and let $n \rightarrow \infty$, then we get the triangle inequality.
\end{proof}
\begin{thm} Let $\varphi$ be a valuation of $F$ and $T_{\varphi}$ be a topology with the fundamental open sets
    \[
    	B(x,\varepsilon) = \{ y \in F ~|~ \varphi(x-y) < \varepsilon \}
    \]
    then $T_\varphi$ is a Hausdorff space.
\end{thm}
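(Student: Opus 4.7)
The plan is to use the ``ultrametric-like'' inequality $\varphi(a+b) \leq C \max(\varphi(a), \varphi(b))$ established earlier (with $C = \|\varphi\|$, which lies in $S_\varphi$) to separate any two distinct points by balls whose radius is chosen in terms of $C$ and $\varphi(x-y)$. Since the balls $B(x,\varepsilon)$ are declared to be a base of $T_\varphi$, it suffices to produce, for any $x \neq y$ in $F$, an $\varepsilon > 0$ such that $B(x,\varepsilon) \cap B(y,\varepsilon) = \varnothing$.

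First I would fix distinct $x, y \in F$ and set $\delta = \varphi(x-y)$. Since $x - y \neq 0$, axiom (i) of a valuation gives $\delta > 0$. Letting $C = \|\varphi\|$ (so that $\varphi(a+b) \leq C \max(\varphi(a), \varphi(b))$ for all $a, b \in F$ by the earlier proposition), I would then pick any $\varepsilon$ with $0 < \varepsilon < \delta/C$; such an $\varepsilon$ exists because $\delta, C > 0$.

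The core step is the disjointness check. If some $z$ belonged to $B(x,\varepsilon) \cap B(y,\varepsilon)$, then $\varphi(x-z) < \varepsilon$ and $\varphi(z-y) < \varepsilon$, and writing $x - y = (x-z) + (z-y)$ I would apply the multiplicative form of axiom (iii) to obtain
\[
  \delta = \varphi(x-y) \;\leq\; C \max\bigl(\varphi(x-z),\, \varphi(z-y)\bigr) \;<\; C\varepsilon \;<\; \delta,
\]
a contradiction. Hence the two balls are disjoint open neighborhoods of $x$ and $y$, proving Hausdorff.

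There is no real obstacle here; the only mild subtlety is remembering that the triangle inequality is not available in general and that one must use the constant $C = \|\varphi\|$ rather than $C = 2$ or $C = 1$. If one preferred a cleaner presentation, an equivalent route would be to invoke \Cref{valalpha} and the subsequent proposition to replace $\varphi$ with $\varphi^\alpha$ for sufficiently small $\alpha$, making $\varphi^\alpha$ an honest metric, and to note that the bijection $t \mapsto t^\alpha$ on $\mathbb{R}^{\geq 0}$ shows the balls of $\varphi$ and of $\varphi^\alpha$ generate the same topology; but the direct argument above avoids even that detour.
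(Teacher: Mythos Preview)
Your argument is correct. The separation step via $\varphi(x-y) \leq C\max(\varphi(x-z),\varphi(z-y)) < C\varepsilon < \delta$ is exactly right, and it is a genuinely more direct route than the paper's: the paper first replaces $\varphi$ by a suitable power $\varphi^\alpha$ so as to assume $\|\varphi\| \leq 2$, then uses the ordinary triangle inequality with $\varepsilon/2$-balls. Your direct use of the constant $C = \|\varphi\|$ avoids that reduction entirely --- the very detour you mention at the end \emph{is} the paper's chosen path. What your version gains is economy; what the paper's version gains is that, once reduced to an honest metric, it can also verify cleanly that the balls $B(x,\varepsilon)$ really do form a base for a topology (the paper checks this explicitly as part of the same proof, whereas you take it as declared by the statement). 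If you wanted to match the paper's completeness on that point, you could add a line noting that for $z \in B(x,\varepsilon)$ with $\varphi(x-z) = \lambda$, the ball $B(z,(\varepsilon-\lambda)/C)$ sits inside $B(x,\varepsilon)$, which is the basis check done with the $C\max$ inequality instead of the triangle inequality.
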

\begin{proof}
     Let $\varphi$ be a valuation of $F$, then for any $\alpha \in \mathbb{R}^{> 0}$, we have that the fundamental open sets of $\varphi$ and $\varphi^\alpha$ are the same. In fact, 
\[
     B_\varphi(x, \varepsilon) = \{ y \in F ~|~ \varphi(x-y) < \varepsilon \} = \{ y \in F ~|~ \varphi^\alpha(x-y) < \varepsilon^\alpha \} = B_{\varphi^\alpha}(x, \varepsilon^\alpha)
\]
This shows that $T_\varphi = T_{\varphi^\alpha}$.
\\ \indent We can assume that $\varphi$ satisfies the triangle inequality because if $||\varphi|| > 2$, then we can always find $\alpha \in \mathbb{R}^{\geq 0}$ such that $||\varphi^\alpha|| = ||\varphi||^\alpha \leq 2$. We first show that the fundamental open sets is actually a basis.
\begin{enumerate}
	\item $x \in B(x,1)$,
    \item Let $y \in B(x, \varepsilon)$, with $\varphi(x-y) = \lambda$. Then we have $0 < \gamma < \varepsilon - \lambda$, so that for any $z \in (y, \gamma)$, we have that
    \[
    	\varphi(x-z) = \varphi(x-y+ y-z) \leq \varphi(x-y) + \varphi(y-z) < \lambda + \varepsilon - \lambda = \varepsilon
    \]
For any two fundamental open sets $B(y_1, \varepsilon_1), B(y_2, \varepsilon_2)$, and a point $x \in B(y_1, \varepsilon_1) \cap B(y_2, \varepsilon_2)$, we just shown that there exists $B(x, \gamma_i) \subset B(y_i, \varepsilon_2)$. Furthermore, $x \in B(x, \gamma_1) \cap B(x, \gamma_2) = B(x, \gamma) \subset B(y_1, \varepsilon_1) \cap B(y_2, \varepsilon_2)$, where $\gamma = \min(\gamma_1, \gamma_2)$.
\end{enumerate}
This shows that the fundamental open sets are indeed a basis and defines a topology on $F$. Now if $x$ and $y$ are two distinct points with $\varphi(x-y) = \varepsilon$, we have the two distinct open sets $B(x, \varepsilon/2)$ and $B(y, \varepsilon/2)$, for any $z$ in the intersection, we have
\[
	\varepsilon = \varphi(x-y) = \varphi(x- z + z-y) \leq \varphi(x-z) + \varphi(z-y) < \varepsilon/2 + \varepsilon/2 = \varepsilon
\]
which gives a contradiction.
\end{proof}
A topology induced by a valuation $\varphi$ will be always be denoted by $T_\varphi$. Viewing the field $F$ as a topological space via $T_\varphi$, we have the following characterization of convergence.
\[
	x_n \rightarrow 0 \text{ in } T_\varphi \text{ if and only if } \varphi(x_n) \rightarrow 0 \text{ in } \mathbb{R}
\]
For one direction, we have that fix $\varepsilon > 0$, then for because $B(0, \varepsilon)$ is an open set containing $0$, there exists $N$ such that $n \geq N \Rightarrow x_n \in B(0, \varepsilon)$. This is translated into
\[
	n \geq N \Rightarrow |\varphi(x_n) - 0| < \varepsilon
\]
hence $\varphi(x_n) \rightarrow 0$ in $\mathbb{R}$. 
\\ \indent Conversely, let $U$ be an open set containing $0$, then because $0$ is an interior point of $U$. Hence there exists $\varepsilon > 0$ such that $0 \in B(0, \varepsilon) \subset U$. For this $\varepsilon$, there exists $N$ such that 
\[
	n \geq N \Rightarrow |\varphi(x_n)| < \varepsilon \Rightarrow x_n \in B(0,\varepsilon) \subset U
\]
Hence the usual metric convergence is equivalent to the topological convergence. Furthermore, we know that in $\mathbb{R}$, we have
\[
	\varphi(x^n) \rightarrow 0 \text{ in } \mathbb{R} \Leftrightarrow \varphi(x) < 1 \Leftrightarrow x \in B(0, 1)
\]
\begin{thm} Let $\varphi_1$ and $\varphi_2$ be nontrivial valuations of $F$ and $x \in F$ be arbitrary, then the following are equivalent
\begin{enumerate}
	\item $\varphi_2 = \varphi_1^\alpha$ for some $\alpha \in \mathbb{R}^{\geq 0}$,
    \item $T_{\varphi_1} = T_{\varphi_2}$,
    \item $T_{\varphi_1} \supset T_{\varphi_2}$, i.e. $T_{\varphi_1}$ is a stronger/finer topology than $T_{\varphi_2}$,
    \item $\varphi_1(x) < 1 \Rightarrow \varphi_2(x) <1$,
    \item $\varphi_1(x) \leq 1 \Rightarrow \varphi_2(x) \leq 1$,
    \item $ \left \{  
    \begin{array}{lcl}
    	\varphi_1(x) < 1 & \Leftrightarrow & \varphi_2(x) <1 \\
        \varphi_1(x) \leq 1 & \Leftrightarrow & \varphi_2(x) \leq 1 \\
        \varphi_1(x) > 1 & \Leftrightarrow & \varphi_2(x) >1
    \end{array} \right .$
\end{enumerate}
\end{thm}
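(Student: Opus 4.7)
The plan is to prove the cycle $(1) \Rightarrow (2) \Rightarrow (3) \Rightarrow (4) \Rightarrow (5) \Rightarrow (6) \Rightarrow (1)$. The opening implications are nearly formal: $(1) \Rightarrow (2)$ follows from the identity $B_\varphi(x, \varepsilon) = B_{\varphi^\alpha}(x, \varepsilon^\alpha)$ established in the proof of the previous theorem; $(2) \Rightarrow (3)$ is trivial; and for $(3) \Rightarrow (4)$, I would use the characterization stated just before the theorem that $\varphi_i(x) < 1$ is equivalent to $x^n \to 0$ in $T_{\varphi_i}$. Since $T_{\varphi_1} \supset T_{\varphi_2}$ means $T_{\varphi_1}$ is the finer topology, any sequence converging in $T_{\varphi_1}$ also converges in $T_{\varphi_2}$, giving $\varphi_1(x) < 1 \Rightarrow \varphi_2(x) < 1$.

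For $(4) \Rightarrow (5)$, I would argue by contradiction: suppose $\varphi_1(x) \leq 1$ but $\varphi_2(x) > 1$. Use nontriviality of $\varphi_1$ to pick $y \in F^*$ with $\varphi_1(y) < 1$. Then $\varphi_1(x^n y) = \varphi_1(x)^n \varphi_1(y) \leq \varphi_1(y) < 1$ for every $n$, so $(4)$ yields $\varphi_2(x^n y) < 1$; but $\varphi_2(x^n y) = \varphi_2(x)^n \varphi_2(y) \to \infty$ as $n \to \infty$, a contradiction. For $(5) \Rightarrow (6)$, the forward implication $\varphi_1(x) \leq 1 \Rightarrow \varphi_2(x) \leq 1$ is given; applied to $x^{-1}$, it yields $\varphi_1(x) > 1 \Rightarrow \varphi_2(x) \geq 1$. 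To upgrade $\geq 1$ to $> 1$, suppose instead that $\varphi_2(x) = 1$ while $\varphi_1(x) > 1$, and use nontriviality of $\varphi_2$ to pick $y \in F^*$ with $\varphi_2(y) > 1$. Then $z_k := x^{-k} y$ satisfies $\varphi_2(z_k) = \varphi_2(y) > 1$ for every $k$, while $\varphi_1(z_k) = \varphi_1(x)^{-k} \varphi_1(y) \to 0$ as $k \to \infty$; for large $k$, $\varphi_1(z_k) \leq 1$, and $(5)$ forces $\varphi_2(z_k) \leq 1$, contradicting $\varphi_2(z_k) > 1$. The remaining biconditionals in $(6)$ follow symmetrically.

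The main obstacle is $(6) \Rightarrow (1)$, which I would handle with the standard rational-approximation argument. Fix $y_0 \in F^*$ with $\varphi_1(y_0) > 1$ (so also $\varphi_2(y_0) > 1$ by $(6)$), and set $\alpha := \log \varphi_2(y_0) / \log \varphi_1(y_0) > 0$. For any $x \in F^*$, write $\beta := \log \varphi_1(x) / \log \varphi_1(y_0)$, so that $\varphi_1(x) = \varphi_1(y_0)^\beta$; it suffices to show $\varphi_2(x) = \varphi_2(y_0)^\beta$, since then taking logarithms gives $\varphi_2(x) = \varphi_1(x)^\alpha$. For any rational $p/q > \beta$ with $q > 0$, the identity $\varphi_1(x^q / y_0^p) = \varphi_1(y_0)^{\beta q - p} < 1$ holds, and $(6)$ gives $\varphi_2(x^q / y_0^p) < 1$, i.e., $\varphi_2(x) < \varphi_2(y_0)^{p/q}$. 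Letting $p/q \downarrow \beta$ through rationals yields $\varphi_2(x) \leq \varphi_2(y_0)^\beta$; the reverse inequality is obtained analogously by approximating $\beta$ from below, which closes the cycle and completes the proof.
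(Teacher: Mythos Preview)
Your proof is correct and follows essentially the same cycle $(1)\Rightarrow(2)\Rightarrow\cdots\Rightarrow(6)\Rightarrow(1)$ as the paper, with the same mechanism at each step (open-ball identity for $(1)\Rightarrow(2)$, the $x^n\to 0$ criterion for $(3)\Rightarrow(4)$, auxiliary-element contradictions for $(4)\Rightarrow(5)$ and $(5)\Rightarrow(6)$, and the rational-approximation argument for $(6)\Rightarrow(1)$). The only cosmetic difference is in $(5)\Rightarrow(6)$: the paper first rules out $\varphi_1(x)<1,\ \varphi_2(x)=1$, whereas you first rule out $\varphi_1(x)>1,\ \varphi_2(x)=1$; once either is done, the remaining biconditionals follow by contrapositive and replacing $x$ by $x^{-1}$, so the two organizations are equivalent.
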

\begin{proof} ~
	\begin{itemize}
    	\item[] (i) $\Rightarrow$ (ii): 
        \begin{multline*}
        	B_2(x, r) = \{ y \in F ~|~ \varphi_2(x-y) < r \} = \{ y \in F ~|~ \varphi_1^\alpha(x-y) < r \} \\ = \{ y \in F ~|~ \varphi_1(x-y) < r^{1/\alpha} \} = B_1(x,r^{1/\alpha})
        \end{multline*}
        The above shows that the fundamental open sets of both topology coincide, so two topologies coincide. 
        \item[] (ii) $\Rightarrow$ (iii): Obvious
        \item[] (iii) $\Rightarrow$ (iv): 
        \[
        	\varphi_1(x) < 1 \Rightarrow x \rightarrow 0 \text{ in } T_{\varphi_1}
        \]
        which implies that any open set $U$ in $T_{\varphi_1}$, there exists $n \geq N$ such that $x_n \in U$. Any open set in $T_{\varphi_2}$ is an open set in $T_{\varphi_1}$, so the same criterion holds. This implies that
        \[
        	x \rightarrow 0 \text{ in } T_{\varphi_2} \Rightarrow \varphi_2(x) < 1
        \]
        \item[] (iv) $\Rightarrow$ (v): Suppose $\varphi_1(x) =1$ and $\varphi_2 (x) >1$ for contradiction. Because $\varphi_1$ is nontrivial, there exists $y \in F$ such that $\varphi_1(y) <1$, so $\varphi_2(y) < 1$. Because $\varphi_2(x) > 1$ there exists $n \in \mathbb{N}$ such that $\varphi_2(1/y) < \varphi_2(x)^n \Rightarrow 1 < \varphi_2(x^ny)$.
        \[
        	\varphi_1(x^ny) = \varphi_1(y) < 1 \Rightarrow \varphi_1(x^ny) <1
        \]
        gives a contradiction.
        \item[] (v) $\Rightarrow$ (vi): Now suppose $\varphi_1(x) < 1$ and $\varphi_2(x) = 1$ and let $y \in F$ such that $\varphi_2(y) >1$. There exists $n \in \mathbb{N}$ such that
        \[
        \varphi_1(x)^n \varphi_1(y) < 1 \Rightarrow \varphi_2(x^ny) <1
        \]
        but $\varphi_2(x^ny) = \varphi_2(y) > 1$, a contradiction. This shows that $\varphi_1(x) < 1 \Rightarrow \varphi_2(x) < 1$. The contrapositive of the assumption gives $\varphi_2(y) > 1 \Rightarrow \varphi_1(y) >1$. Then we have
        \[
        	\varphi_1(x) > 1 \Rightarrow \varphi_1(1/x) < 1 \Rightarrow \varphi_2(1/x) < 1 \Rightarrow \varphi_2(x) >1
        \]
        \[
        	\varphi_2(x) < 1 \Rightarrow \varphi_2(1/x) > 1 \Rightarrow \varphi_1(1/x) >1 \Rightarrow \varphi_1(x) < 1
        \]
        Thus it remains to show that 
        \[
        	\varphi_1(x) = 1 \Leftrightarrow \varphi_2(x) = 1
        \]
        But by the result above $\varphi_1(x) = 1 \Rightarrow \varphi_2(x) \geq 1$ and $\varphi_2(x) \leq 1$, hence $\varphi_2(x) =1$. The analogue proof gives us the desired result.
        \item[] (vi) $\Rightarrow$ (i): The last proof uses the fact that $\mathbb{Q}$ is dense in $\mathbb{R}$. Now let $x \in F$ such that $\varphi_1(x) >1$, and thus $\varphi_2(x) > 1$, then we define 
        \[
        	\alpha = \frac{\log{\varphi_2(x)}}{\log{\varphi_1(x)}}
        \]
        and claim that
        \[
        	\varphi_2 = \varphi_1^{\alpha}
        \]
        Suppose 
        \[
        	\gamma_i = \frac{\log{\varphi_i(y)}}{\log{\varphi_i(x)}}
        \]
        for any $y \in F$. If we could show that $\gamma_1 = \gamma_2$, then we show that $\log{\varphi_1(y)}/\log{\varphi_2(y)} = \log{\varphi_2(x)}/\log{\varphi_1(x)}$, hence for any $y \in F$,
        \[
        	\varphi_1(y)^\alpha = \varphi_1(y)^{\log{\varphi_2(x)}/\log{\varphi_1(x)}} = \varphi_1(y)^{\log{\varphi_2(y)}/\log{\varphi_1(y)}} = \varphi_2(y)
        \]
        so it suffice to show that $\gamma_1 = \gamma_2$. Now let $m/n$ be a rational number such that $m/n \geq \gamma_1$, then 
        \[
        	\begin{array}{lcl}
            	m/n \geq \gamma_1 & \Leftrightarrow & m \log{\varphi_1(x)} \geq n \log{\varphi_1(y)} \\ 
                & \Leftrightarrow & \varphi_1(x^m) \geq \varphi_2(y^n) \\
                & \Leftrightarrow & \varphi_1(x^m/y^n) \geq 1 \\
                & \Leftrightarrow & \varphi_2(x^m/y^n) \geq 2 \\
                & \Leftrightarrow & \varphi_2(x^m) \geq \varphi_2(y^n) \\
                & \Leftrightarrow & m \log{\varphi_2(x)} \geq n \log{\varphi_2(y)} \\
                & \Leftrightarrow & m/n \geq \gamma_2
            \end{array}
        \]
        Performing the same procedure, we get $m/n \leq \gamma_1 \Rightarrow m/n \leq \gamma_2$. By the denseness of $\mathbb{Q}$, we can always fined sequences of $\mathbb{Q}$ $\{ q_i \}$ from above and $\{ t_i \}$ from below that converge to $\gamma_1$. We clearly have that
        \[
        	t_i \leq \gamma_2 \leq q_i
        \]
        with $\lim_{i \rightarrow \infty} t_i = \lim_{i \rightarrow \infty} q_i = \gamma_1$ gives that $\gamma_2 = \gamma_1$ by comparison theorem.
    \end{itemize}
\end{proof}
Two nontrivial valuations are \red{equivalent} if the two induces the same topology. This is clearly a equivalence relation, and by the theorem, we have that its equivalence class is 
\[
	\mfk{p}=[\varphi] = \{ \varphi' ~|~ T_\varphi = T_{\varphi'}\}  = \{ \varphi' ~|~ \varphi' = \varphi^\alpha \text{ for some } \alpha \in \mathbb{R}^{\geq 0} \}
\]
The equivalence class of valuations are called a \red{place}, but, in the case of number theory, places will be called the \red{prime divisor}. The deep connection between the primes and the places will be established in the next section.
\\ \indent Let $\varphi$ and $\varphi'$ be two equivalent valuations, then $\varphi' = \varphi^\alpha$. Then we have that 
\[
	||\varphi'||>1 \Leftrightarrow ||\varphi||^\alpha > 1 \Leftrightarrow ||\varphi|| > 1^{1/\alpha} =1
\]
and similarly, 
\[
	||\varphi'|| = 1 \Leftrightarrow ||\varphi|| = 1
\]
\\ \indent Let $\mfk{p}$ be a prime divisor of $F$, then $\mfk{p}$ is called \red{archimedean} if $||\varphi|| >1$ for all $\varphi \in \mfk{p}$. $\mfk{p}$ is called \red{nonarchimedean} if $||\varphi|| = 1$ for all $\varphi \in \mfk{p}$. Our next goal in this paper will be classifying all prime divisors in a number field $k$. The above remark shows that prime divisors are either archimedean or nonarchimedean.
\\ \indent Let $K, F$ be fields with a field embedding $\mu: F \rightarrow K$. Then for any valuation $\varphi$ of $K$, 
\[
	\varphi^\mu = \varphi \circ \mu
\]
is easily verified to be a valuation of $F$. Furthermore, if $\alpha \in \mathbb{R}^{>0}$, then 
\[
	(\varphi^\alpha)^\mu(x) = \varphi(\mu(x))^\alpha = (\varphi^\mu)^\alpha
\]
hence sends equivalent valuations to equivalent valuations. Hence $\mu$ induces a map $\mu^*$ sending prime divisors of $K$ to prime divisors of $F$ by composing with the valuations of $K$.
\\ \indent Let $\mu: F \rightarrow K$ be a field embedding with $\mfk{q}$ a prime divisor of $K$. Let $\mfk{p}:=\mu^*(\mfk{q})$, then $\mu$ becomes a topological mapping between the topological spaces $(F, T_\mfk{p})$ and $(K, T_\mfk{q})$. We may view $F$ as a subspace of $K$ viewing the homeomorphism $F \cong \mu(F)$, hence the field embedding can be viewed as a topological embedding, thus continuous. In fact, we have $\varphi = \psi \circ \mu$ for some $\psi \in \mfk{q}$ and $\varphi \in \mfk{p}$. $\psi (x-y) < \varepsilon \Rightarrow \psi(\mu(x)- \mu(y)) < \varepsilon$. Furthermore, this shows that 
\begin{align*}
	\{ x_n \} \text{ Cauchy/null sequence in } F & \Rightarrow \{ \mu(x_n) \} \text{ Cauchy/null sequence in } K 
\end{align*}
\section{Nonarchimedean valuation of $k$}
A valuation of a number field $k$ will be sometimes called a \red{multiplicative valuation} or an \red{absolute value} because the concept of valuation is a generalization of the usual absolute value of $\mathbb{Q}$. From now on $F$ will denote arbitrary field, and $k$ will denote number field.
\begin{prp} \label{archchar}
	Let $\varphi$ be a valuation of an arbitrary field $F$, then the following are equivalent
    \begin{enumerate}
    	\item $\mfk{p}=[\varphi]$ is nonarchimedean,
       	\item $\varphi(x+y) \leq \max(\varphi(x), \varphi(y))$ for all $x,y \in F$,
        \item $\{ \varphi(n \cdot 1) ~|~ n \in \mathbb{Z} \}$ is bounded.
    \end{enumerate}
\end{prp}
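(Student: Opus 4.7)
The plan is to prove the cycle (i) $\Rightarrow$ (ii) $\Rightarrow$ (iii) $\Rightarrow$ (i), with the last implication being the substantive one that needs a binomial-expansion argument.

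For (i) $\Rightarrow$ (ii): Since $\mathfrak{p}$ is nonarchimedean, $\|\varphi\| = 1$, and the earlier result that $\|\varphi\| \in S_\varphi$ together with the reformulation of axiom (3) already proved (namely $\varphi(x+y) \leq \|\varphi\| \max(\varphi(x),\varphi(y))$) gives exactly $\varphi(x+y) \leq \max(\varphi(x),\varphi(y))$.

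For (ii) $\Rightarrow$ (iii): Induct on $|n|$. We have $\varphi(1) = 1$, and assuming $\varphi(k \cdot 1) \leq 1$ for $|k| < n$, the ultrametric inequality (ii) yields $\varphi(n \cdot 1) = \varphi((n-1)\cdot 1 + 1) \leq \max(\varphi((n-1)\cdot 1), 1) \leq 1$; the negative case follows from $\varphi(-1) = 1$. So the set is bounded by $1$.

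For (iii) $\Rightarrow$ (i) (the main obstacle): Let $N$ be such that $\varphi(n \cdot 1) \leq N$ for all $n \in \mathbb{Z}$. We need $\|\varphi'\| = 1$ for every $\varphi' \in \mathfrak{p}$, and since $\|\varphi^\alpha\| = \|\varphi\|^\alpha$ it suffices to treat one representative. First, observe that replacing $\varphi$ by $\varphi^\alpha$ for small $\alpha > 0$ keeps (iii) intact (the bound becomes $N^\alpha$) and eventually forces $\|\varphi^\alpha\| = \|\varphi\|^\alpha \leq 2$, so by the earlier proposition we may assume $\varphi$ satisfies the triangle inequality. Now for any $x,y \in F$ expand
\[
	\varphi(x+y)^n = \varphi\left( \sum_{i=0}^n \binom{n}{i} x^i y^{n-i}\right) \leq \sum_{i=0}^n \varphi\!\left(\tbinom{n}{i}\cdot 1\right) \varphi(x)^i \varphi(y)^{n-i} \leq N(n+1) M^n
\]
where $M = \max(\varphi(x),\varphi(y))$, since each binomial coefficient is an integer and hence has $\varphi$-value at most $N$. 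Taking $n$-th roots and letting $n \to \infty$ yields $\varphi(x+y) \leq M$, which is property (ii). In particular $C = 1$ lies in $S_\varphi$, forcing $\|\varphi\| \leq 1$; combined with the general lower bound $\|\varphi\| \geq 1$ this gives $\|\varphi\| = 1$, and equivalently $\|\varphi'\| = 1$ for every $\varphi' \in \mathfrak{p}$.

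The key subtlety in the last step is the preliminary passage to an equivalent valuation with $\|\varphi\| \leq 2$; without triangle inequality, the binomial estimate above cannot be applied term-by-term. Everything else is routine manipulation of the bounds already established in the preceding lemmas.
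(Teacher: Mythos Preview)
Your proof is correct and follows essentially the same route as the paper: the cycle (i)$\Rightarrow$(ii)$\Rightarrow$(iii)$\Rightarrow$(i), with the substantive step (iii)$\Rightarrow$(i) handled by a binomial expansion of $(x+y)^n$, bounding the integer coefficients, and taking $n$-th roots. The only cosmetic difference is that the paper bounds $\varphi\bigl(\sum_i a_i\bigr)$ via the earlier lemma $\varphi(x_1+\cdots+x_m)\le 2m\max_i\varphi(x_i)$, whereas you use the ordinary triangle inequality term-by-term; both require $\|\varphi\|\le 2$, and you are actually more careful than the paper in making that preliminary reduction to an equivalent valuation with $\|\varphi\|\le 2$ explicit.
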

\begin{proof}
The equivalence of $i)$ and $ii)$ are clear from previous remarks. So we assume $ii)$. Then by induction we have that
\[
	\varphi(x_1+ \cdots + x_n) \leq \max(\varphi(x_1), \ldots, \varphi(x_n))
\]
Then $\varphi(n \cdot 1) \leq \varphi(1)$ which is bounded above $\varphi(1)$.
\\ \indent Now assume $iii)$, then for all $n$, we have $\varphi(n) \leq M$ for some $M$. 
\[
	\begin{array}{lcl}
		[\varphi(x+y)]^n & \leq & \varphi((x+y)^n) \\
    	& = & \varphi \left ( \displaystyle \sum_{i=1}^n \binom{n}{i} x^iy^{n-i}  \right ) \\
        & \leq & 2(n+1) \max \left [ \varphi \left ( \displaystyle \binom{n}{i} x^iy^{n-i} \right ) \right ] \\ 
        & \leq & 2(n+1)M \max (\varphi(x)^i\varphi(y)^{n-i}) \\ 
        & \leq & 2(n+1)M \varphi(x)^n
    \end{array}
\]
where in the last inequality, we assumed that $\varphi(x) \geq \varphi(y)$, then by taking the $n^{\text{th}}$ root of unity, we get
\[
	\varphi(x+y) \leq \sqrt[n]{2(n+1)M} \varphi(x) = \sqrt[n]{2(n+1)M} \max(\varphi(x), \varphi(y))
\]
taking $n \rightarrow \infty$, we see that
\[
	\varphi(x+y) \leq \max(\varphi(x), \varphi(y))
\]
\end{proof}
Now our task is to classify all nonarchimedean prime divisors of $\mathbb{Q}$. We first introduce another form of valuation that resembles the properties of the logarithmic function. An \red{exponential valuation} of $F$ is a function $v: F \rightarrow \mathbb{R} \cup \{ \infty \}$ satisfying,
\begin{enumerate}
	\item $v(x) = \infty \Leftrightarrow x = 0$,
    \item $v(xy) = v(x)v(y)$,
    \item $v(x+y) \geq \min(v(x), v(y))$.
\end{enumerate}
First note that $v$ induces a homomorphism $v:F^* \rightarrow \mathbb{R}$ with $v(F^*)$ a subgroup of additive group $\mathbb{R}$. Let $G \subset \mathbb{R}$ be a subgroup and suppose that $G$ is not discrete, then there exists $x \in G$ that is not an isolated point. This means that for all $\varepsilon \in \mathbb{R}^{>0}$, there exists $y_\varepsilon \in G$ such that $|x - y_\varepsilon| < \varepsilon$. This is again tantamount to saying that for all $\varepsilon \in \mathbb{R}^{>0}$, there exists $0 < y_\varepsilon < \varepsilon$ such that $x \pm y_\varepsilon \in G$. Let $z \in \mathbb{R} \backslash G$ with a fixed $\varepsilon \in \mathbb{R}^{>0}$. We would like to show that $B(z, \varepsilon) \cap G \neq \varnothing$. By the archimedean property we have $n \in \mathbb{N}$, 
\[
	nx \leq y < nx + |x|
\]
then if $z - nx < \varepsilon$, then $nx \in B(z, \varepsilon) \cap G$. So we suppose that $z-nx = \delta > \varepsilon$. There exists $0 < y_\varepsilon < \varepsilon$ and $m$ such that $my_\varepsilon < \delta < (m+1)y_\varepsilon$, so that
\[
	0 \leq \delta - m y_\varepsilon < y_\varepsilon < \varepsilon
\]
so
\[
	|nx + my_\varepsilon - z | = |my_\varepsilon - \delta| < \varepsilon
\]
So we may conclude that $nx + my_\varepsilon \in B(0, \varepsilon)$, so $G$ is dense in $\mathbb{R}$. We thus showed that any subgroup of $\mathbb{R}$ is either discrete of dense in $\mathbb{R}$. We call an exponential valuation $v$ \red{discrete} if $v(F^*)$ is a discrete subgroup of $\mathbb{R}$. 
\\ \indent Furthermore, if $G$ is a discrete subgroup of $\mathbb{R}$, and suppose $G \neq 0$, then there exists $s \in \mathbb{R}$ where $s$ is the smallest positive number in $G$. Unless, $0$ is not an isolated point in $G$. Now suppose there exists $t \in G$, such that $t \neq ns$ for all $n \in \mathbb{Z}$, then for some $n$, we have
\[
	ns < t < (n+1)s
\]
with $0 < t - ns < s$ contradicting the minimality of $s$. Hence all discrete subgroup of $\mathbb{R}$ is of the form $s\mathbb{Z}$. The discrete exponential valuation $v$ is \red{normalized} if $v(F^*) = \mathbb{Z}$ meaning that $s=1$.
\\ \indent The exponential valuation of $\mathbb{Q}$ arises from observing the fundamental theorem of arithmetic and extending to $\mathbb{Q}$, but because our main goal is to study $\mathcal{O}_k$, we give a generalized construction using the unique factorization of ideals. For a fixed $\mfk{p}$ a prime ideal of $\mathcal{O}_k$, for all fractional $\mfk{A}$ of $\mathcal{O}_k$, we have the decomposition
\[
	\mfk{A} = \prod_{\substack{ q \\ \text{ prime ideal }}} \mfk{q}^{e_\mfk{q}(\mfk{A})}
\]
We get a map $\widetilde{v_\mfk{p}} : J_k \rightarrow \mathbb{Z}$ defined by $\mfk{A} \mapsto e_{\mfk{q}}(\mfk{A})$ where $J_k$ is the group of fractional ideals of $\mathcal{O}_k$. This map induces a map $v_\mfk{p}: k \rightarrow \mathbb{Z} \cup \{ \infty \}$ by the composition
\[
	k^* \rightarrow P_k \hookrightarrow J_k \xlongrightarrow{~\widetilde{v_\mfk{p}}~} \mathbb{Z}
\]
with $0 \mapsto \infty$. Suppose $x,y \in F$ with 
\[
	(x) = \prod \mfk{p}^{e_\mfk{p}} \hspace*{4em} (y) = \prod \mfk{p}^{f_\mfk{p}}
\]
where $(x)$ denotes the fractional ideal generated by $x$, then
\begin{enumerate}
	\item $v_\mfk{p}(x) = \infty \Leftrightarrow x=0$ by construction,
    \item We observe that
    \[
    	(xy) = (x)(y) = \prod_{\substack{\mfk{p} \text{ prime ideal }}} \mfk{p}^{e_\mfk{p} + f_\mfk{p}}
    \]
    clearly shows that $v_\mfk{p}(xy) = v_\mfk{p}(x) + v_\mfk{p}(y)$.
    \item We may assume that $e_\mfk{p} \leq f_\mfk{p}$, with $n = e_\mfk{p}$, then we have that
    \[	
    	x,y \in \mfk{p}^n \Rightarrow x+y \in \mfk{p}^n
    \]
    We get the desired result that $v_\mfk{p}(x+y) \geq v_\mfk{p}(x) = \min(v_\mfk{p}(x), v_\mfk{p}(y))$.
\end{enumerate}
Let $v$ be an exponential valuation of $F$, then we may choose $q >1$ and define $\varphi_v: F \rightarrow \mathbb{R}^{\geq 0}$ by
\[
	x \mapsto q^{-v(x)}
\]
which is, in fact, a nonarchimedean valuation of $F$. Conversely, for any nonarchimedean valuation $\varphi$ of $F$, we can define an exponential valuation $v_\varphi: F \rightarrow \mathbb{R} \cup \{ \infty \}$
\[
	x \mapsto - \log{\varphi(x)}
\]
By the remark above, we see that there exists a canonical bijection $v \leftrightarrow \varphi$, and for a fixed $q>1$, we see that another bijection $ v^\alpha \leftrightarrow \varphi^\alpha$. So we will interchangeably use $v \in \mfk{p}$ and $\varphi \in \mfk{p}$ for nonarchimedean prime divisor $\mfk{p}$.
\begin{lem} Let $v$ be a nonarchimedean exponential valuation of a number field $k$, then 
\[
	v = sv_\mfk{p}
\]
for some $s \in \mathbb{R}^{>0}$ and a unique nonzero prime ideal $\mfk{p}$.
\end{lem}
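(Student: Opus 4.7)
The plan is to extract the prime ideal directly from the valuation by setting $\mfk{p} = \{\alpha \in \mathcal{O}_k : v(\alpha) > 0\}$, show this is a nonzero prime of $\mathcal{O}_k$, and then verify that $v$ is a positive scalar multiple of $v_\mfk{p}$ by testing on a uniformizer. The statement implicitly assumes $v$ is nontrivial, since no $s \in \mathbb{R}^{>0}$ works for the trivial exponential valuation.

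The first technical step is to show $v(\alpha) \geq 0$ for every $\alpha \in \mathcal{O}_k$. Since $v(1) = 0$ and $v(x+y) \geq \min(v(x), v(y))$, induction gives $v(n) \geq 0$ for every $n \in \mathbb{Z}$. If $\alpha \in \mathcal{O}_k$ satisfies $\alpha^n + c_{n-1}\alpha^{n-1} + \cdots + c_0 = 0$ with $c_i \in \mathbb{Z}$, and one supposes $v(\alpha) < 0$ for contradiction, then $v(\alpha^n) = nv(\alpha)$, while the relation forces $v(\alpha^n) \geq \min_{0 \leq i < n}(v(c_i) + iv(\alpha)) \geq (n-1)v(\alpha)$, where the last inequality uses $v(\alpha) < 0$ together with $i \leq n-1$; this contradicts $nv(\alpha) < (n-1)v(\alpha)$. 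With this in hand, $\mfk{p}$ is a prime ideal: closure under addition follows from the nonarchimedean inequality, closure under $\mathcal{O}_k$-multiplication uses $v \geq 0$ on $\mathcal{O}_k$, and primality follows because $v(\alpha) + v(\beta) > 0$ with $v(\alpha), v(\beta) \geq 0$ forces one of the two to be positive. Nontriviality of $v$ provides some $x \in k^*$ with $v(x) \neq 0$; after possibly inverting, $v(x) > 0$, and writing $x = a/b$ with $a, b \in \mathcal{O}_k$ gives $v(a) > v(b) \geq 0$, so $a$ is a nonzero element of $\mfk{p}$.

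The central step is to pick $\pi \in \mfk{p} \setminus \mfk{p}^2$, which by unique factorization of ideals forces $v_\mfk{p}(\pi) = 1$, and set $s := v(\pi) > 0$. For an arbitrary $x \in k^*$ with $n = v_\mfk{p}(x)$, the element $u := x\pi^{-n}$ is a unit in the localization $\mathcal{O}_{k,\mfk{p}}$, so one can write $u = a/b$ with $a, b \in \mathcal{O}_k \setminus \mfk{p}$; then $v(a)$ and $v(b)$ are both $\geq 0$ by the integrality step and both fail to be $> 0$ because $a, b \notin \mfk{p}$, so $v(a) = v(b) = 0$, whence $v(u) = 0$ and $v(x) = nv(\pi) = s\,v_\mfk{p}(x)$. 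Uniqueness of $\mfk{p}$ is then immediate, since any such representation recovers $\mfk{p}$ as $\{\alpha \in \mathcal{O}_k : v(\alpha) > 0\}$.

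The main obstacle is the integrality step showing $v \geq 0$ on all of $\mathcal{O}_k$, since it is the one place where a \emph{global} property of $\mathcal{O}_k$ (integrality over $\mathbb{Z}$) is indispensable; the only mildly delicate manipulation is keeping track of which direction inequalities flip when $v(\alpha)$ is negative. Once this is in place, everything else is routine bookkeeping with the DVR structure of $\mathcal{O}_{k,\mfk{p}}$ and the ideal-theoretic definition of $v_\mfk{p}$.
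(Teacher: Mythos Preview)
Your proof is correct and follows the same overall strategy as the paper: define $\mfk{p}=\{\alpha\in\mathcal{O}_k: v(\alpha)>0\}$, use an integral equation over $\mathbb{Z}$ to force $v\ge 0$ on $\mathcal{O}_k$, verify that $\mfk{p}$ is a nonzero prime, pick $\pi\in\mfk{p}\setminus\mfk{p}^2$, and reduce to showing that $v$ vanishes on elements of $\mfk{p}$-adic valuation zero.

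The one genuine difference is how that last reduction is justified. The paper inserts an inner lemma, proved via ideal-class arithmetic (choosing an integral ideal in the inverse class prime to $\mfk{p}$), to show that any $\alpha$ with $v_\mfk{p}(\alpha)\ge 0$ can be written as $a/b$ with $a,b\in\mathcal{O}_k$ and $v_\mfk{p}(b)=0$. You bypass this entirely by invoking the standard identification of the localization $(\mathcal{O}_k)_\mfk{p}$ with the valuation ring of $v_\mfk{p}$ (which the paper itself records in the diagram opening Part~II): since $u=x\pi^{-n}$ has $v_\mfk{p}(u)=0$, it lies in $(\mathcal{O}_k)_\mfk{p}^\times$, hence $u=a/b$ with $b\notin\mfk{p}$, and then $v_\mfk{p}(a)=0$ forces $a\notin\mfk{p}$ as well. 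Your route is shorter and avoids the class-group detour; the paper's route is self-contained in that it re-proves the localization/valuation-ring identification from scratch, at the cost of extra machinery.
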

\begin{proof}
We first prove existence of such $\mfk{p}$. Let
\[
	\mfk{p} := \{ \alpha \in \mathcal{O}_k ~|~ v(\alpha) > 0 \}
\]   
We have that $v(\alpha) = \infty >0$, and $v(1) = v(1) + v(1) \Rightarrow v(1) = 0$. Also, $v(1) = v(-1 \cdot -1) = 2v(-1) \Rightarrow v(-1) = 0$. For any $n \in \mathbb{Z}$, $v(n) \geq v(1) = 0$.
\\ \indent Now let $\alpha \in \mathcal{O}_k$ nonzero, then by definition, there exists an equation
\[
	\alpha^n + a_{n-1}\alpha^{n-1} + \cdots + a_0 = 0
\]
with $n$ smallest possible, $a_i \in \mathbb{Z}$. Moving the lower terms to the other side, we get
\[
	\alpha^n = -a_{n-1} \alpha^{n-1} - \cdots - a_0
\]
Let $0 \leq j < n$ with $a_j \neq 0$,
\[
	v(-a_j\alpha^{j}) \geq j v(\alpha)
\]
If $v(\alpha) >0$,
	\[
    	v(\alpha^n) \geq v(-a_{n-1} \alpha^{n-1} - \cdots - a_0) \geq v(-a_j \alpha^{j}) \geq jv(n) \geq (n-1)v(n)  
    \]
and we have $n v(\alpha) = v(\alpha^n)$, so we get
\[
	n v(\alpha) \geq (n-1) v(\alpha) \Rightarrow v(\alpha) \geq 0
\]
which yields a contradiction. This proves that for all $\alpha \in \mathcal{O}_k$, $v(\alpha) \geq 0$.
\\ \indent Next step is to show that $\mfk{p}$ is a prime ideal. But this is easy to see as $x,y \in \mfk{p}$, $v(x+y) \geq min(v(x), v(y)) >0$, hence $x+y \in \mfk{p}$, and for all $x \in \mathcal{O}_k$ and $y \in \mfk{p}$, we have $v(x) \geq 0$, $v(y) >0$. $v(xy) = v(x) + v(y) >0$. Finally, $v$ is not identically $0$ for $k^*$ because $v$ is not trivial. $v$ is not identically $0$ for all $\mathcal{O}_k \backslash \{ 0 \}$. In fact, if it was, then for any $x/y \in k^*$, $v(x/y) = v(x) - v(y) = 0 - 0 =0$. We have shown that $\mfk{p}$ is nonzero.
\\ \indent Suppose $v_\mfk{p}(\alpha) = 0$, then we will show that $v(\alpha) = 0$. To prove this we need another lemma.
\begin{lem} Let $\mfk{p} \subset \mathcal{O}_k$ be a prime ideal with $\alpha \in k^*$, then $\alpha = a/b$ with $v_\mfk{p}(b) = 0$ for some $a,b \in \mathcal{O}_k$.
\end{lem}
\begin{proof}
	Let $(\alpha) = \mfk{A}\mfk{B}^{-1}$ where $\mfk{A}, \mfk{B}$ is an ideal of $\mathcal{O}_k$ with $(\mfk{A}, \mfk{B}) = \mathcal{O}_k$, i.e. relatively prime. Then we have $v_\mfk{p}(\mfk{B}) = 0$ as $v_\mfk{p}(\alpha) \geq 0$. Because $\mfk{B}(\alpha) = \mfk{A}$, we have that $[\mfk{A}] = [\mfk{B}]$, where $[~~]$ denotes the ideal class.
    \\ \indent Let $[\mfk{A}]$ be any arbitrary ideal class, then $\mfk{A}$ has an unique decomposition,
    \[
    	\mfk{A} = \prod_{i=1}^n \mfk{p}^{e_{\mfk{p}_i}}
    \]
    with $e_{\mfk{p}_i} \in \mathbb{Z}$. Now fix a $\mfk{p}$ prime ideal with and let $\pi \in \mfk{p}^m \backslash \mfk{p}^{m+1}$, then we have
    \[
    	(\pi) = \mfk{p}^m \prod \mfk{q}^{f_{\mfk{q}_i}}
    \]
    where $m = e_{\mfk{p}_i}$. By chinese remainder theorem, we have an element $x$ relatively prime to $\mfk{p}$ and divisible by $\mfk{p}_i, \mfk{q}_i$ that is not $\mfk{p}$, then taking sufficiently large $l$, we get
    \[
    	(\pi^{-1}x^l)\mfk{A}
    \]
    represents $[\mfk{A}]$, is an integral ideal, and is relatively prime to $\mfk{p}$. 
    \\ \indent Let $\mfk{C} \in [\mfk{A}]^{-1} = [\mfk{B}]^{-1}$ with $\mfk{C}$ an integral ideal relatively prime to $\mfk{p}$, then we get
    \[
    	(\alpha) = \mfk{A}\mfk{C} (\mfk{B}\mfk{C})^{-1}
    \]
$\mfk{A}\mfk{C} = (a), \mfk{B}\mfk{C} = (b)$ giving us 
\[
	\alpha = \frac{a}{b}
\]
with $v_\mfk{p}(b) = 0$ as both $\mfk{B}, \mfk{C}$ are relatively prime to $\mfk{p}$.
\end{proof}
By the lemma, we easily get
\[
	v_\mfk{p}(\alpha) = v_\mfk{p}(a/b) = v_\mfk{p}(a) - v_\mfk{p}(b) = v_\mfk{p}(a) = 0
\]
$a,b \in \mathcal{O}_k$ and $a,b \notin \mfk{p}$, we have $v(a) = v(b) = 0$ by definition. For any $\alpha \in k^*$, $n = v_\mfk{p}(\alpha) \in \mathbb{Z}$, and for $\pi \in \mfk{p} \backslash \mfk{p}^2$, we get $v_\mfk{p}(\pi) =1$. $v_\mfk{p}(\alpha \pi^{-n}) = 0$, so $v(\alpha \pi^{-n}) = 0$.
\[
	v(\alpha) = v(\pi^n) = v(\pi) n = v(\pi) v_\mfk{p}(\alpha)
\]
Let $s= v(\pi)$ which is positive as $\pi \in \mfk{p}$, we get the desired result.
\\ \indent Now for uniqueness, let $v = s_1v_\mfk{p_1} = s_2 v_\mfk{p_2}$, then 
\begin{multline*}
	\mfk{p_1} = \{\alpha \in \mathcal{O}_k ~|~ v_\mfk{p_1}(\alpha) > 0 \} = \{\alpha \in \mathcal{O}_k ~|~ s_1 v_\mfk{p_1}(\alpha) > 0 \} \\ = \{\alpha \in \mathcal{O}_k ~|~ s_2 v_\mfk{p_2}(\alpha) > 0 \} = \{\alpha \in \mathcal{O}_k ~|~ v_\mfk{p_1}(\alpha) > 0 \} = \mfk{p_2}
\end{multline*}
hence the prime ideal $\mfk{p}$ is unique.
\end{proof}
We thus have shown that the only nonarchimedean prime divisor $\mfk{p}$ of a number field $k$ is $[v_\mfk{p}]$. Our next goal would be to classify all archimedean valuations of $k$,but we need completion which is a generalized technique used to construct the real numbers from $\mathbb{Q}$.
\section{Completion}
Let $\mfk{p}$ be a prime divisor of a field $F$, then a $\mfk{p}$-Cauchy sequence is a sequence $\{x_n\}$ in $F$ such that
\[
	x_n - x_m \rightarrow 0 \text{ as } n, m \rightarrow \infty \text{ in } T_\mfk{p}
\]
$F$ is said to be \red{$\mfk{p}$-complete} if all $\mfk{p}$-Cauchy sequences converge. A field extension $\widehat{F}$ of $F$ with a prime $\widehat{\mfk{p}}$ with a field embedding $\mu:F \rightarrow \widehat{F}$ is said to be a \red{$\mfk{p}$-completion} of $F$ if
\begin{enumerate}
	\item $F \subset \widehat{F}$,
    \item $\mu(F) \subset \widehat{F}$ is dense,
    \item $(\widehat{F}, \widehat{\mfk{p}})$ is $\widehat{\mfk{p}}$-complete.
\end{enumerate}
Notice that once we have found $\widehat{F}$ satisfying ii) and iii), we may use set-theoretical trick to make $\widehat{F}$ into an extension of $F$. We find $S$ which is a set bijective to $\widehat{F} - F$ with respect to a bijection $\psi$. Let $\widetilde{F} = S \cup F$, then the bijection $\psi$ can be extended to $\Psi: \widehat{F} \rightarrow \widetilde{F}$. Then $\widetilde{F}$ is an extension of $F$ by for all $x,y \in \widetilde{F}$,
\[
	xy :=\Psi(\Psi^{-1}(x)\Psi^{-1}(y)) \text{ and } x+y :=\Psi(\Psi^{-1}(x)+\Psi^{-1}(y))
\]
\begin{thm} Let $F$ be a field with a prime divisor $\mfk{p}$. A $\mfk{p}$-completion of $F$ exists, and it is essentially unique. In other words, if 
\[
	(\widehat{F_1}, \widehat{\mfk{p}_1}, \mu_1) \text{ and } (\widehat{F_2}, \widehat{\mfk{p}_2}, \mu_2)
\]
are two $\mfk{p}$-completions of $F$, then there exists an unique isomorphism
\[
	\sigma: \widehat{F_1} \rightarrow \widehat{F_2}
\]
which is an extension of $1_F: F \rightarrow F$ such that $\sigma^*(\widehat{\mfk{p}_2}) = \widehat{\mfk{p}_1}$.
\end{thm}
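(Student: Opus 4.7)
The plan is to imitate the construction of $\mathbb{R}$ from $\mathbb{Q}$ essentially verbatim, with $\mathbb{Q}$ replaced by $F$ and the ordinary absolute value replaced by any $\varphi \in \mfk{p}$; by \Cref{valalpha} I may harmlessly replace $\varphi$ by a power $\varphi^\alpha$ of norm $\leq 2$, so I assume throughout that $\varphi$ satisfies the ordinary triangle inequality. First I form $\mathcal{R}$, the ring of $\mfk{p}$-Cauchy sequences in $F$, as a subring of $\prod F$; the arguments of \Cref{cauchyring} show sums and products of Cauchy sequences are Cauchy. The null sequences $\mathcal{N}$ again form an ideal by the same proof as before, and I set $\widehat{F} := \mathcal{R}/\mathcal{N}$. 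To see $\widehat{F}$ is a field I repeat the argument of \Cref{inversenull}: a nonzero class $[x_n]$ has, by Cauchyness and the failure-of-null condition $(\dagger)$, a tail bounded away from $0$ in $\varphi$-value, and inverting on that tail (with zeros prepended) produces a Cauchy inverse. The constant-sequence map $\mu\colon F \hookrightarrow \widehat{F}$ is then a field embedding, and by the set-theoretic surgery outlined before the theorem statement I promote $\mu$ to a genuine inclusion $F \subset \widehat{F}$.

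\textbf{Extending the valuation, density, and completeness.} I extend $\varphi$ to $\hat{\varphi}\colon \widehat{F} \to \mathbb{R}^{\geq 0}$ by $\hat{\varphi}([x_n]) := \lim_n \varphi(x_n)$; the inverse triangle inequality $\bigl| \varphi(x_n) - \varphi(x_m) \bigr| \leq \varphi(x_n - x_m)$ shows $\{\varphi(x_n)\}$ is a Cauchy sequence in $\mathbb{R}$, so the limit exists by completeness of $\mathbb{R}$, and the same inequality shows the value is independent of representative. Properties (i)--(iii) of a valuation transfer to $\hat{\varphi}$ by passage to the limit, and I set $\widehat{\mfk{p}} := [\hat{\varphi}]$. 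Density of $F$ in $\widehat{F}$ is immediate: given $[x_n]$ and $\varepsilon > 0$, Cauchyness supplies $N$ with $\varphi(x_n - x_N) < \varepsilon/2$ for $n \geq N$, so $\hat{\varphi}([x_n] - x_N) \leq \varepsilon/2$. Completeness of $\widehat{F}$ then mimics the proof that $\mathbb{R}$ is Cauchy complete: a $\widehat{\mfk{p}}$-Cauchy sequence $\{\xi_n\}$ in $\widehat{F}$ can, by density, be approximated termwise by $a_n \in F$ with $\hat{\varphi}(\xi_n - a_n) < 1/n$; then $\{a_n\}$ is $\mfk{p}$-Cauchy in $F$, its image in $\widehat{F}$ converges to its own class, and hence so does $\{\xi_n\}$.

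\textbf{Uniqueness.} Given two completions $(\widehat{F_i}, \widehat{\mfk{p}_i}, \mu_i)$ for $i=1,2$, I build $\sigma\colon \widehat{F_1} \to \widehat{F_2}$ by transport of Cauchy sequences: for $x \in \widehat{F_1}$ I pick $a_n \in F$ with $\mu_1(a_n) \to x$ (possible by density), observe $\{a_n\}$ is $\mfk{p}$-Cauchy in $F$ since field embeddings preserve Cauchy and null sequences (remark at the end of the previous section), push it to $\widehat{F_2}$ via $\mu_2$ where it remains Cauchy, and define $\sigma(x)$ as its limit, which exists by completeness of $\widehat{F_2}$. Standard $\varepsilon$-chases show $\sigma$ is a well-defined field homomorphism fixing $F$ pointwise, and continuity follows from the identity $\hat{\varphi}_2(\sigma(x) - \sigma(y)) = \hat{\varphi}_1(x-y)$ read off from the approximating sequences. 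The relation $\sigma^*(\widehat{\mfk{p}_2}) = \widehat{\mfk{p}_1}$ holds because both $\hat{\varphi}_1$ and $\hat{\varphi}_2 \circ \sigma$ restrict to the same valuation on $F$ and agree on the dense subset $F$; since valuations are continuous, they coincide on $\widehat{F_1}$. Uniqueness of $\sigma$ itself is immediate from the earlier proposition that two continuous maps into a Hausdorff space agreeing on a dense set are equal. The main obstacle is checking that the construction is independent of the chosen representative $\varphi \in \mfk{p}$; I resolve this by noting that the extension operation is compatible with $\varphi \mapsto \varphi^\alpha$, so $\widehat{\mfk{p}}$ is a canonical place on $\widehat{F}$ rather than a specific valuation.
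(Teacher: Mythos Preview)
Your proof is correct and follows essentially the same route as the paper: choose $\varphi\in\mfk{p}$ with $\|\varphi\|\le 2$, form $\widehat{F}=\mathcal{R}/\mathcal{N}$, extend the valuation by $\hat\varphi([x_n])=\lim_n\varphi(x_n)$, and verify density and completeness exactly as you describe. The only structural difference is that the paper packages your uniqueness argument (transport of Cauchy sequences, well-definedness via null sequences, and the dense-plus-Hausdorff uniqueness principle) into a separate, slightly more general proposition about extending an arbitrary embedding $\sigma\colon F\to K$ to $\widehat\sigma\colon\widehat F\to\widehat K$, and then invokes it with $\sigma=1_F$; your direct argument is the specialization of that proposition and is equally valid.
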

\begin{proof}
	We choose $\varphi \in \mfk{p}$ such that $||\varphi|| \leq 2$. In other words, $\varphi$ satisfies the triangle inequality. We let $\mathcal{R}$ be the set of all $\mfk{p}$-Cauchy sequences in $F$, then addition and multiplication defined componentwise makes $\mathcal{R}$ into a ring as seen in \Cref{cauchyring}. Let $X = \{ x_n \} \in \mathcal{R}$, then there exists a canonical way to extend $\varphi$ by putting
    \[	
  		\widetilde{\varphi}(X) = \lim_{n \rightarrow \infty} \varphi(x_n)
    \]
    then the reverse triangle inequality \Cref{reversetri} shows that the $\varphi(x_n)$ is Cauchy,
    \[
    	|\varphi(x_n) - \varphi(x_m)| \leq \varphi(x_n - x_m)
    \]
    and the fact that $\mathbb{R}$ is complete shows that the limit converges. We can easily see that for any $X,Y \in \mathcal{R}$,
    \begin{enumerate}
    	\item $\widetilde{\varphi}(X) \geq 0$, as $\varphi(x_n) \geq 0$ for all $n$.
        \item $\widetilde{\varphi}(XY) = \widetilde{\varphi}(x)\widetilde{\varphi}(Y)$, as
        \[
        	\widetilde{\varphi}(XY) = \lim_{n \rightarrow \infty} \varphi(x_n) \varphi(y_n) = \lim_{n \rightarrow \infty} \varphi(x_n) \lim_{n \rightarrow \infty} \varphi(y_n)  = \widetilde{\varphi}(X) \widetilde{\varphi}(Y)
        \]
        \item $\widetilde{\varphi}(X+Y) \leq \widetilde{\varphi}(X) + \widetilde{\varphi}(Y)$ as 
        \[
        	\widetilde{\varphi}(X+Y) = \lim_{n \rightarrow \infty} \varphi(x_n + y_n) \leq \lim_{n \rightarrow \infty} \varphi(x_n) + \varphi(y_n) = \widetilde{\varphi}(X)\widetilde{\varphi}(Y)
        \]
    \end{enumerate}
\noindent hence $\widetilde{\varphi}$ defines a valuation of $\mathcal{R}$. Next we define the subset of $\mathcal{N}$ of $\mathcal{R}$ by 
\[
	\mathcal{N}:= \{ X \in \mathcal{R} ~|~ X=\{x_n\}, \lim_{n \rightarrow \infty} \varphi(x_n) = 0 \}
\]
called the \red{null sequences} of $F$. We define $\widehat{F}:= \mathcal{R}/\mathcal{N}$ and see that it is a field is proven similarly to \Cref{inversenull}. Also, let $N \in \mathcal{N}$ be a null sequence, then we have
\[
	\widetilde{\varphi}(A) \leq \widetilde{\varphi}(A+N) + \widetilde{\varphi}(N) = \widetilde{\varphi}(A+N) \leq \widetilde{\varphi}(A)
\]
shows that $\widetilde{\varphi}$ is well-defined for the cosets of the form $X + \mathcal{N}$. We have that $\widetilde{\varphi}$ is a valuation for $\widehat{F}$. We let $\widehat{\mfk{p}} = [\widetilde{\varphi}]$. Also, we have a monomorphism $\mu: F \rightarrow \widehat{F}$ by $x \mapsto [x]$ where $[x_n]$ stands for the equivalence class of $\{ x_n \}$ in $\widehat{F}$. We have that $\mu^*(\widehat{\mfk{p}}) = \mfk{p}$ because
\[
	\widetilde{\varphi}(\mu(x)) = \lim_{n \rightarrow \infty} \varphi(x) = \varphi(x)
\]
Now consider $[X] \in \widehat{F}$ with $X = \{ x_n \} \in \mathcal{R}$. For each $n$, $[X]- \mu(x_n)$ is represented by the sequence
\[
	\{ x_1 - x_n, x_2 - x_n, \ldots \}
\]
Then by using the property above,
\[
	\lim_{n \rightarrow \infty} \widetilde{\varphi}([X] - \mu(x_n)) = \lim_{n \rightarrow \infty} \left ( \lim_{m \rightarrow \infty} \varphi(x_m - x_n) \right ) = 0
\]
as $\{x_n\}$ is a Cauchy sequence. This shows that $\mu(F)$ is dense in $\widehat{F}$. We have found an element in $F$ that is arbitrary close to a point in $\widehat{F}$.
\\ \indent Finally consider a Cauchy sequence $\{ [X]_n \}$ of $\widehat{F}$, then by density, there exists $y_n \in F$ such that 
\[
	\widetilde{\varphi}([X]_n - \mu(y_n)) < 1/n
\]
for each $n$, then we have $\{ [X]_n - \mu(y_n) \}$ is a null sequence which shows that $\{ \mu(y_n) \}$ is a Cauchy sequence in $\widehat{F}$. Then because $\widetilde{\varphi}^\mu = \varphi$, we have that $\{ y_n \}$ is a Cauchy sequence in $F$.  Let $[Y] \in \widetilde{F}$ with $Y = \{ y_n \}$, then we have
\[
	\widetilde{\varphi}([Y] - \mu(y_n)) = 0 \Rightarrow \widetilde{\varphi}([Y]- [X]_n) = 0
\]
hence $\widehat{F}$ is $\widehat{\mfk{p}}$ complete. The uniqueness is due to the following more general proposition,
\end{proof}
\begin{prp} \label{incluison-valuation} Consider $K, F$ be two fields with prime divisors $\mfk{q}$ and $\mfk{p}$ respectively. Suppose there exists a field embedding $\sigma: F \rightarrow K$ such that $\sigma^*(\mfk{q}) = \mfk{p}$. Let $(\widehat{F}, \widehat{\mfk{p}}, \mu)$ and $(\widehat{K}, \widehat{\mfk{q}}, \lambda)$ be $\mfk{p}, \mfk{q}$ completion of $F, K$ respectively, then there exists a unique monomorphism $\widehat{\sigma}: \widehat{F} \rightarrow \widehat{K}$ such that
\begin{enumerate}
	\item $(\widehat{\sigma})^*(\widehat{\mfk{q}}) = \widehat{\mfk{p}}$,
    	\begin{center}
    	\begin{tikzpicture}
        	\matrix (m) [matrix of math nodes, nodes={anchor=center}, row sep = 3em, column sep= 3em]{
            	\widehat{F} & \widehat{K} \\
                F & K \\
            };
            \draw[->]
            	(m-1-1) edge node[above]{$\widehat{\sigma}$} (m-1-2)
                (m-2-1) edge node[above]{$\sigma$} (m-2-2)
                		edge node[left]{$\mu$} (m-1-1)
                (m-2-2) edge node[right]{$\lambda$} (m-1-2);
               
        \end{tikzpicture}
    \end{center}
    \item $\widehat{\sigma} \circ \mu = \mu' \circ \sigma$,
\end{enumerate}
\end{prp}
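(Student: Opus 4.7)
The idea is to extend $\lambda \circ \sigma$ from the dense subfield $\mu(F) \subset \widehat{F}$ to all of $\widehat{F}$ by continuity. Choose representatives $\varphi \in \mfk{p}$ and $\psi \in \mfk{q}$ with $\varphi = \psi \circ \sigma$ (possible since $\sigma^*(\mfk{q}) = \mfk{p}$); these then extend to $\widetilde{\varphi}$ on $\widehat{F}$ and $\widetilde{\psi}$ on $\widehat{K}$, representing $\widehat{\mfk{p}}$ and $\widehat{\mfk{q}}$ respectively, as in the construction of the completion. The hypothesis $\sigma^*(\mfk{q})=\mfk{p}$ immediately implies that $\sigma$ sends $\mfk{p}$-Cauchy sequences in $F$ to $\mfk{q}$-Cauchy sequences in $K$ and null sequences to null sequences, since $\psi(\sigma(x_n)) = \varphi(x_n)$.

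First, I would construct $\widehat{\sigma}$. Given $\hat{x} \in \widehat{F}$, density of $\mu(F)$ furnishes a sequence $\{x_n\} \subset F$ with $\mu(x_n) \to \hat{x}$. Such a sequence is automatically Cauchy in $F$, so $\{\sigma(x_n)\}$ is Cauchy in $K$, whence $\{\lambda(\sigma(x_n))\}$ is Cauchy in the complete field $\widehat{K}$ and converges to some element, which I define to be $\widehat{\sigma}(\hat{x})$. To check independence of the representing sequence, note that if $\mu(x_n) \to \hat{x}$ and $\mu(y_n) \to \hat{x}$, then $\mu(x_n - y_n) \to 0$, so $\{x_n-y_n\}$ is null in $F$, hence $\{\sigma(x_n) - \sigma(y_n)\}$ is null in $K$, and the two candidate limits agree.

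Next I would verify the remaining properties. That $\widehat{\sigma}$ is a ring homomorphism follows from the continuity of the field operations on $\widehat{K}$ applied termwise to representing sequences, together with the fact that $\sigma$ is already a ring homomorphism on $F$. Taking the constant sequence $x_n = x$ gives $\widehat{\sigma}(\mu(x)) = \lambda(\sigma(x))$, which is the compatibility condition $\widehat{\sigma} \circ \mu = \lambda \circ \sigma$. For the valuation condition, continuity of $\widetilde{\psi}$ on $\widehat{K}$ yields
\[
    \widetilde{\psi}(\widehat{\sigma}(\hat{x})) = \lim_{n \to \infty} \widetilde{\psi}(\lambda(\sigma(x_n))) = \lim_{n \to \infty} \psi(\sigma(x_n)) = \lim_{n \to \infty} \varphi(x_n) = \widetilde{\varphi}(\hat{x}),
\]
so $\widehat{\sigma}^*(\widehat{\mfk{q}}) = \widehat{\mfk{p}}$; in particular $\widehat{\sigma}$ is injective, since $\widetilde{\varphi}(\hat{x}) = 0$ iff $\hat{x}=0$.

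Finally, uniqueness follows from the earlier proposition on continuous maps into Hausdorff spaces: any two candidates for $\widehat{\sigma}$ must agree on the dense subset $\mu(F)$ (where both equal $\lambda \circ \sigma$), and both are continuous into the Hausdorff space $\widehat{K}$, so they coincide everywhere. Observing that the essential-uniqueness claim in the preceding theorem is just this proposition with $F=K$, $\sigma = 1_F$, and the two completions playing the roles of $(\widehat{F},\widehat{\mfk{p}},\mu)$ and $(\widehat{K},\widehat{\mfk{q}},\lambda)$ simultaneously closes the earlier proof. The most delicate step, in my view, is the consistent choice of representative valuations $\varphi$ and $\psi$ so that the equality chain for $\widetilde{\psi} \circ \widehat{\sigma}$ really identifies $\widehat{\mfk{p}}$ with $\widehat{\sigma}^*(\widehat{\mfk{q}})$ rather than merely some equivalent valuation; everything else is a formal exercise in passing to limits.
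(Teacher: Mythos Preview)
Your proposal is correct and follows essentially the same route as the paper: define $\widehat{\sigma}(\hat{x})$ as $\lim \lambda(\sigma(x_n))$ for any sequence $x_n$ in $F$ with $\mu(x_n)\to\hat{x}$, check well-definedness via null sequences, verify the homomorphism and compatibility properties termwise, deduce the valuation compatibility by passing the chain $\widetilde{\psi}\circ\lambda\circ\sigma=\varphi$ to the limit, and obtain uniqueness from the dense--Hausdorff proposition. The only cosmetic difference is that you fix compatible representatives $\varphi=\psi\circ\sigma$ at the outset and read off $\widetilde{\psi}\circ\widehat{\sigma}=\widetilde{\varphi}$ directly, whereas the paper starts from an arbitrary $\varphi\in\widehat{\mfk{q}}$ and matches it to some $\theta\in\widehat{\mfk{p}}$ via $\theta^\mu=\varphi^{\lambda\sigma}$; both arrive at the same conclusion.
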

\begin{proof} Let $x \in \widehat{F}$. By denseness of $F$, there exists a Cauchy sequence $\{ x_n \}$ in $F$ such that $\mu(x_n) \rightarrow x$ in $\widehat{F}$. By assumption we have $\mu^*(\widehat{\mfk{p}}) = \mfk{p}, \sigma^*(\mfk{q}) = \mfk{p}, \lambda^*(\widehat{q}) =\mfk{q}$.
\[
	\{ x_n \} \text{ Cauchy in } T_\mfk{p} \Leftrightarrow \{ \sigma(x_n) \} \text{ Cauchy in } T_\mfk{q} \Leftrightarrow \{ \lambda(\sigma(x_n)) \} \text{ Cauchy in } T_{\widehat{q}}
\]
as $\widehat{K}$ is complete, we not define $\widehat{\sigma}$ by
\[
	\widehat{\sigma}(x) = \lim_{n \rightarrow \infty} \lambda(\sigma(x_n))
\]
\begin{enumerate}
	\item Suppose $\{ y_n \}$ is another Cauchy sequence that converges to $x$. We have that $\{ x_n - y_n \}$ is a null sequence in $T_\mfk{p}$, then $\{ \lambda(\sigma(x_n - y_n)) \}$ is a null sequence in $T_{\widehat{\mfk{q}}}$. Hence
    \[
    	\lim_{n \rightarrow \infty} \lambda(\sigma(x_n)) = \lim_{n \rightarrow \infty} \lambda(\sigma(y_n))
    \]
    \item We have that $\sigma, \lambda, \lim_{n \rightarrow \infty}$ has the additive, multiplicative property, hence $\widehat{\sigma}$ is a homomorphism,
    \item 
    \[
    	\widehat{\sigma}\mu(x) = \lim_{n \rightarrow \infty} \lambda(\sigma(x)) = \lambda(\sigma(x))
    \]
    shows that the diagram commutes.
    \item Let $\widehat{\sigma}(x) = \widehat{\sigma}(y)$, then we have $\lim_{n \rightarrow \infty} \lambda(\sigma(x_n)) = \lim_{n \rightarrow \infty} \lambda(\sigma(y_n)) \Rightarrow x_n-y_n$ is a null sequence in $F$, hence $\lim_{n \rightarrow \infty} \mu(x_n-y_n) = 0 \Rightarrow x = \lim_{n \rightarrow \infty} \mu(x_n) = \lim_{n \rightarrow \infty} \mu(y_n) = y$.
\end{enumerate}
Next step is to prove that $(\widehat{\sigma})^*(\widehat{\mfk{q}}) = \widehat{\mfk{p}}$. Let $\varphi \in \widehat{\mfk{q}}$, then we use that notation $\varphi^{\lambda}$ for $\varphi \circ \lambda$, then $\varphi^{\lambda \sigma} \in \sigma^*( \lambda^*(\widehat{\mfk{q}})) = \sigma^*(\mfk{q}) = \mfk{p}$. As $\mu^*(\widehat{\mfk{p}}) = \mfk{p}$, there exists $\theta \in \widehat{p}$ such that $\theta^\mu = \varphi^{\lambda \sigma}$. Now let $x \in F$, then we have that
\[
	x = \lim_{n \rightarrow \infty} \mu(x_n)
\]
\begin{align*}
	\varphi^{\widehat{\sigma}}(x) & = \varphi(\widehat{\sigma}(x)) \\
    & = \varphi \left ( \lim_{n \rightarrow \infty} \lambda(\sigma(x_n)) \right ) \\ 
    & = \lim_{n \rightarrow \infty} \varphi^{\lambda \sigma} (x_n) \\
    & = \lim_{n \rightarrow \infty} \theta^\mu(x_n) \\
    & = \theta \left ( \lim_{n \rightarrow \infty} \mu(x_n) \right ) = \theta(x)
\end{align*}
hence we have proved that $\varphi^{\widehat{\sigma}} \in \widehat{\mfk{p}}$. In particular, $(\widehat{\sigma})^*(\widehat{\mfk{q}}) = \widehat{\mfk{p}}$.
\\ \indent As we have proved existence, we are left to show uniqueness. Let $\widehat{\sigma}_1, \widehat{\sigma}_2$ be two extensions of $\sigma$ with the desired properties, then $\widehat{\sigma}_1(\mu(F)) = \widehat{\sigma}_2(\mu(F))$ with $\mu(F)$ dense in $\widehat{F}$. Because a continuous mapping agrees in a dense set, and $\widehat{K}$ is Hausdorff, we conclude that $\widehat{\sigma}_1 = \widehat{\sigma}_2$.
\end{proof}
Let every exponential valuation of $v$ of $K$, we have a completion $K_v$ which extends canonically to a exponential valuation of $K_v$ denoted again by $v$. There exists a unique extension $\overline{v}$ to $\overline{K_v}$, the algebraic closure of $K_v$. The unique extension exists due to the following theorem.
\begin{thm} Let $F$ be complete with respect to the valuation $\varphi$. If $K/F$ is an algebraic extension, $\varphi$ is extended to a unique extension of $\varphi$ to a valuation of $L$. If $[K:F]<\infty$, then the extension is given by
\[
	\varphi'(x) = \sqrt[n]{\varphi(N^K_F(x))}
\]
and $L$ is complete respect to the extension.
\end{thm}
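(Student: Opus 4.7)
The plan is to handle three things in order: existence of the formula in the finite case, uniqueness of any extension, and the bootstrap from finite to arbitrary algebraic extensions. Throughout I would use that, by the equivalence theorem already established, it suffices to produce \emph{some} valuation extending $\varphi$ up to the equivalence relation on valuations, and the uniqueness argument will pin down the representative.

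For existence when $n=[K:F]<\infty$, I define $\varphi'(x):=\sqrt[n]{\varphi(N^K_F(x))}$. Axioms (i) and (ii) of a valuation are immediate from multiplicativity and nondegeneracy of the field norm, and the formula genuinely restricts to $\varphi$ on $F$ since $N^K_F(a)=a^n$ for $a\in F$. The real work is axiom (iii). I would reduce to the nonarchimedean case (in the archimedean case the only complete options are $\mathbb{R}$ and $\mathbb{C}$, and the classical extension $\mathbb{R}\hookrightarrow\mathbb{C}$ handles everything). The key lemma, which I expect to be the main obstacle, is the integrality characterization
\[
\varphi'(x)\leq 1 \quad\Longleftrightarrow\quad x \text{ is integral over the valuation ring }\mathcal{O}_F.
\]
The ``$\Leftarrow$'' direction is classical: if $x$ satisfies a monic polynomial over $\mathcal{O}_F$, then $N^K_F(x)$ is a product of conjugates which are also integral, so $\varphi(N^K_F(x))\leq 1$. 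The ``$\Rightarrow$'' direction is where completeness of $F$ is essential; I would argue via the minimal polynomial of $x$ and a Newton-polygon style argument, using that over a complete nonarchimedean field the valuation forces the intermediate coefficients of the minimal polynomial into $\mathcal{O}_F$ once the constant term is there. With the lemma in hand, $\varphi'(x)\leq 1$ makes $x$ integral, hence $1+x$ integral, so $\varphi'(1+x)\leq 1$, which gives the even stronger nonarchimedean inequality.

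For uniqueness in the finite case, suppose $\psi_1,\psi_2$ both extend $\varphi$. View $K\cong F^n$ as a finite-dimensional vector space over the complete field $F$. The standard fact that all norms on a finite-dimensional vector space over a complete valued field are equivalent forces $T_{\psi_1}=T_{\psi_2}$, and the equivalence theorem from the previous section then yields $\psi_2=\psi_1^\alpha$ for some $\alpha\in\mathbb{R}^{>0}$; restricting to $F$ and using $\varphi=\varphi^\alpha$ gives $\alpha=1$. Completeness of $K$ under $\varphi'$ comes from the same equivalence-of-norms principle: the $\varphi'$-topology agrees with the product topology on $F^n$, and a finite product of complete spaces is complete.

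Finally, for an arbitrary algebraic extension $K/F$, I define $\varphi'(x)$ as the value of the unique extension constructed above on the finite subfield $F(x)$. Well-definedness across nested finite subextensions follows from uniqueness, and the valuation axioms applied to a pair $x,y\in K$ are verified inside the finite subextension $F(x,y)$, where the finite case already supplies a valuation. Uniqueness globally follows because any extension to $K$ restricts to a valuation on each finite subextension that extends $\varphi$, and is therefore determined by the finite-case uniqueness. The completeness assertion in the theorem is only claimed for $[K:F]<\infty$, so no further work is needed in the infinite case.
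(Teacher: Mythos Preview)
Your proposal is correct and follows essentially the same route as the paper: verify the norm formula gives a valuation by showing that $\varphi'(x)\le 1$ forces the minimal polynomial of $x$ into $\mathcal{O}_F[t]$ (the paper derives this from its Hensel-lemma corollary rather than a Newton-polygon argument, but the content is identical), deduce $\varphi'(1+x)\le 1$, obtain uniqueness and completeness from the equivalence of norms on a finite-dimensional space over a complete field, and pass to arbitrary algebraic $K/F$ by writing $K$ as the union of its finite subextensions. You are in fact slightly more careful than the paper in explicitly separating off the archimedean case via Ostrowski before invoking the nonarchimedean machinery.
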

\begin{proof}
	 We start with $[K:F]< \infty$. Clearly $N^K_F(x) = x^n$ for $x \in F$, hence $\varphi'|_K = \varphi$. Because $\varphi(y) \geq 0$, $\varphi'(x) \geq 0$, and $\sqrt[n]{x} = 0, \varphi(x) = 0, N_F^K(x) = 0 \Leftrightarrow x=0$ shows that $\varphi'(x)=0 \Leftrightarrow x=0$. Also all three functions, $\sqrt[n]{\color{white}x}, \varphi, N_F^K$ are all multiplicative implies that $\varphi'$ multiplicative. So it remains to show that $\varphi'(x) \leq 1 \Rightarrow \varphi'(1+x) \leq 1$.
     \\ \indent Obviously, $\varphi'(x) \leq 1 \Rightarrow \sqrt[n]{\varphi(N_F^K(x))} \leq 1 \Rightarrow \varphi(N_F^K(x)) \leq 1$. Let $f = m_F(x)(t)$ be the minimal polynomial of $x$ over $F$ with $f = t^r + \cdots + a_0$. Then $N_F^K(x) = \pm a_0^d$ where $d=[K:F(x)]$ by \Cref{normlem}. This gives that $\varphi(a_r)^d \leq 1 \Rightarrow a_r \in \mathcal{O} \Rightarrow f \in \mathcal{O}[t]$ by \Cref{henlem}.  We now put $g(t) = f(t-1)$, then $g(1+x) = f(x) = 0$ with $g$ minimal polynomial of $1+x$ over $F$. 
     \[
     	N^K_F(1+x) = (\pm g(0))^d = (\pm f(-1))^d = \pm ((-1)^d + \cdots +a_r) \in \mathcal{O} \Rightarrow \varphi(N_F^K(1+x)) \leq 1
     \]
     $\varphi'(1+x) \leq 1$ then easily follows.
\\ \indent If $K/F$ is infinite, then we observe that 
\[
	K = \bigcup_E E
\]
where $E$ runs over all finite intermediate field of $K/F$. For each $E$, there exists an extension $\varphi_E$ of $E$. Let $x \in K$, then $x \in E$ for some $E$. We define $\varphi'(x) = \varphi_E(x)$. To finish the proof we need uniqueness which is dealt in a more general setting.
\end{proof}
An finite dimensional $F$-vector space $X$ is said to be \red{normed} over $(F, \varphi)$ if there exists a norm function $\|{\color{white}x}\|: X \rightarrow \mathbb{R}$, satisfying,
\begin{enumerate}
	\item $\| \zeta \| \geq 0$ and $\| \zeta \| = 0 \Leftrightarrow \zeta = 0$,
    \item $\|x \zeta \| = \varphi(x) \| \zeta \|$,
    \item $\| \zeta + \eta \| \leq \| \zeta \| + \| \eta \|$.
\end{enumerate}
Let $(E, \varphi')$ be a extension of $(F, \varphi)$, then the additive group $E$ with $\|{\color{white}x} \| = \varphi'$ is normed over $(F, \varphi)$. Also, if $X$ is a finite dimensional vector space over $F$ with $w_1, \ldots, w_n$ as its basis, then for all $\zeta \in X$, $\zeta = a_1 w_1 + \cdots + a_n w_n$. Define
\[
	\|\zeta\| = \max(\varphi(a_i))
\]
which runs over all $i$, then $\|{\color{white}x}\|$ becomes a norm of $X$ and call it the \red{canonical norm}.
\begin{thm} Let $F$ be complete respect with $\mfk{p}$ and let $\varphi \in \mfk{p}$ with $\| \varphi \| \leq 2$. Suppose that $X$ is a finite dimensional vector space over $F$, and $|{\color{white}x}|$ be any norm on $X$ over $(F, \varphi)$, then $X$ is complete topological group in the metric topology determined by $|{\color{white}x}|$ and there exists $C_1, C_2 >0$ such that
\[
	C_1 \| \zeta \| \leq |\zeta| \leq C_2 |\zeta|
\]
for all $\zeta \in X$.
\end{thm}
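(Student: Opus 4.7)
The plan is to prove both the norm‑equivalence and the completeness simultaneously by induction on $n=\dim_F X$, with the basis $w_1,\dots,w_n$ fixed and $\|\zeta\|=\max_i\varphi(a_i)$ for $\zeta=\sum a_iw_i$. Note there is a small typo in the stated inequality; the intended statement is $C_1\|\zeta\|\le|\zeta|\le C_2\|\zeta\|$, which is what I will prove.

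The upper bound is cheap. Using the triangle inequality for $|\cdot|$ and property (ii) of a norm,
\[
|\zeta|\le\sum_{i=1}^n \varphi(a_i)\,|w_i|\le\Bigl(\sum_{i=1}^n|w_i|\Bigr)\|\zeta\|,
\]
so take $C_2=\sum_i|w_i|$. This also shows that the canonical norm is finer than (hence at least as strong as) $|\cdot|$, so convergence in $\|\cdot\|$ implies convergence in $|\cdot|$.

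The base case $n=1$ is immediate: $|aw_1|=\varphi(a)|w_1|=|w_1|\cdot\|aw_1\|$, and the map $a\mapsto aw_1$ from $F$ to $X$ is an isometry for $\|\cdot\|$, so completeness of $F$ gives completeness of $X$. For the inductive step, assume both statements for dimensions $<n$. The heart of the proof is the lower bound, and this is the main obstacle. I argue by contradiction: suppose no $C_1>0$ works, so there exist $\zeta^{(k)}\in X$ with $|\zeta^{(k)}|/\|\zeta^{(k)}\|\to 0$. For each $k$, pick an index $i_k$ with $\varphi(a_{i_k}^{(k)})=\|\zeta^{(k)}\|$ and replace $\zeta^{(k)}$ by $\zeta^{(k)}/a_{i_k}^{(k)}$ (allowed since both $|\cdot|$ and $\|\cdot\|$ scale by $\varphi(a_{i_k}^{(k)})$). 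By the pigeonhole principle, after passing to a subsequence, $i_k$ is constant, say $i_k=n$. Then $\zeta^{(k)}=\sum_{i<n}b_i^{(k)}w_i+w_n$ with $\varphi(b_i^{(k)})\le 1$ and $|\zeta^{(k)}|\to 0$.

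Set $\eta^{(k)}=\zeta^{(k)}-w_n\in X':=Fw_1+\cdots+Fw_{n-1}$. Then $|\eta^{(k)}-(-w_n)|=|\zeta^{(k)}|\to 0$, so $\{\eta^{(k)}\}$ is Cauchy in $(X,|\cdot|)$ and hence Cauchy in $(X',|\cdot|)$. By the induction hypothesis applied to $X'$, the norm $|\cdot|$ on $X'$ is equivalent to the canonical norm of $X'$, and $(X',|\cdot|)$ is complete. Thus $\eta^{(k)}$ converges to some $\eta\in X'$ in $|\cdot|$, and by the upper bound together with the equivalence on $X'$ the coordinates $b_i^{(k)}$ converge in $F$ to some $b_i$, giving $\eta=\sum_{i<n}b_iw_i$. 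By uniqueness of limits in the Hausdorff space $(X,|\cdot|)$ we get $\eta=-w_n$, i.e.\ $\sum_{i<n}b_iw_i+w_n=0$, contradicting the linear independence of $w_1,\dots,w_n$. This produces the required $C_1$.

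Finally, having established $C_1\|\zeta\|\le|\zeta|\le C_2\|\zeta\|$ on $X$, completeness in $|\cdot|$ is equivalent to completeness in $\|\cdot\|$. The latter is clear: a Cauchy sequence in $(X,\|\cdot\|)$ is coordinate-wise Cauchy in $F$, and completeness of $F$ produces a limit in $X$. Continuity of addition, negation, and scalar multiplication in the metric topology of $|\cdot|$ follows directly from the norm axioms (e.g.\ $|\zeta+\eta-\zeta_0-\eta_0|\le|\zeta-\zeta_0|+|\eta-\eta_0|$), so $X$ is a complete topological group. The main difficulty is genuinely the lower bound in the inductive step, because it is the only place where completeness of $F$ must be invoked to rule out a ``runaway'' sequence whose limit escapes the lower-dimensional subspace.
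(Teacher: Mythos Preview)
Your proof is correct and follows the same approach as the paper's: the cheap upper bound $C_2=\sum_i|w_i|$, then induction on $\dim X$ for the lower bound, using that each $(n{-}1)$-dimensional coordinate subspace is complete (hence closed) in $|\cdot|$ by the inductive hypothesis. The paper phrases the inductive step topologically---each affine hyperplane $Y_i+w_i$ is closed and misses $0$, so some ball of radius $C_1$ about $0$ avoids their union---while you reach the same contradiction sequentially after normalizing and pigeonholing onto one coordinate; these are the same argument in different clothing.
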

\begin{proof} Let $\{ \zeta_m \}$ be Cauchy sequence respect to $|{\color{white}x}|$, then for all $\varepsilon \in \mathbb{R}^{>0}$, we have
\[
	\| \zeta_{m_1} - \zeta_{m_2} \| < \varepsilon \text{ whenever } m_1, m_2 \geq N
\]
with $\zeta_m = a_{m,1}w_1 + \cdots + a_{m,n}w_n$. We then have a natural inequality,
\[
	\varphi(z_{m_1, i} - z_{m_2, i}) \leq \|z_{m_1} - z_{m_2} \| < \varepsilon
\]
shows that each component is a Cauchy sequence in $F$, thus there exists $a_j$ such that $a_{m, j} \rightarrow a_j$. Let $\zeta = a_1w_1 + \cdots + a_nw_n$, then we claim that $\zeta_m \rightarrow \zeta$. For any $\varepsilon$, and for all $i$, there exists $N_i$ such that
\[
	m \geq N_i \Rightarrow \varphi(a_{m,i}-a_i) < \varepsilon
\]
then if $N = \max(N_i)$, implies that
\[
	m \geq N \Rightarrow \| \zeta_m - \zeta \| = \max( \varphi(a_{m,i}-a_i)) < \varepsilon
\]
which proves the claim. Furthermore, proves that $X$ is complete with respect to $\|{\color{white}x}\|$. 
\\ \indent Let $\zeta = a_1 w_1 + \cdots a_n w_n$, then
\[
	\begin{array}{lcl}
    	|\zeta| & = & |a_1w_1 + \cdots + a_nw_n| \\
        & \leq & |a_1w_1| + \cdots + |a_nw_n| \\
        & = & \varphi(a_1) |w_1|+ \cdots + \varphi(a_n) |w_n| \\
        & \leq & \|\zeta\| |w_1| + \cdots + \| \zeta \| |w_n| = C_2 \| \zeta \|
    \end{array}
\]
if we let $C_2 = |w_1| + \cdots + |w_n|$. If $n=1$, then $\varphi = |{\color{white}x}|$, hence the result is immediate. Suppose that we have  the conclusion for $n-1$-dimensional. Then we define
\[
	Y_i = Fw_1 + \cdots + Fw_{i-1} + Fw_{i+1} + \cdots + Fw_n
\]
then $X = Y_i + Fw_i$. We have that $Y_i$ is complete by induction hypothesis. $Y_i$ complete implies that $Y_i$ closed in $X$ as all limit point of $Y_i$ has a Cauchy sequence in $Y_i$. Since $X$ is a topological group, $Y_i + w_i$ is closed. Because $0 \notin Y_i + w_i$, so $0 \notin \cup_{i=1}^n (Y_i + w_i)$ which is also closed. There exists a neighborhood of $0$ disjoint from the union. Hence there exist $C_1$ such that
\[
	|\eta_i + w_i| \geq C_1
\]
for all $\eta_i \in Y_i$. If $\zeta = a_1w_1 + \cdots + a_nw_n|$ nonzero with $\varphi(a_r) \|\zeta\|$, then $a_r \neq 0$, and 
\[
	|a_r^{-1}| = |(a_1/a_r)w_1 + \cdots + w_r + \cdots + (a_n/a_r)w_n| \geq C_1
\]
hence we have that $|\zeta| \geq C_1 \varphi(a_r) = C_1 \| \zeta \|$.
\\ \indent For all $y \in B_{|~|}(x, \varepsilon)$, $y \in B_{\|~\|}(x, \varepsilon/C_1) \subset B_{|~|}(x, \varepsilon)$, and similar process shows that the topology induced by both norms are equivalent.
\end{proof}
\begin{prp} Let $F$ be $\mfk{p}$-complete and $K/F$ be finite. If an extension of $\mfk{p}$ exists, then it is unique. Furthermore, if $K/F$ be algebraic extension, then the extension of $\mfk{p}$ is unique if it exists.
\end{prp}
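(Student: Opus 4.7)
The plan is to reduce everything to the preceding theorem on norms over a complete field. Suppose $\mfk{q}_1$ and $\mfk{q}_2$ are two prime divisors of $K$ both restricting to $\mfk{p}$; to conclude $\mfk{q}_1 = \mfk{q}_2$ it suffices to show that they induce the same topology on $K$, since the earlier characterization theorem then forces them into the same equivalence class.

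First I would fix $\varphi \in \mfk{p}$ with $\|\varphi\| \leq 2$, available by \Cref{valalpha}, so that $(F, \varphi)$ is a complete topological field whose valuation satisfies the triangle inequality. Using \Cref{valalpha} again, I would select representatives $\varphi_1' \in \mfk{q}_1$ and $\varphi_2' \in \mfk{q}_2$ that (a) both restrict to $\varphi$ on $F$ and (b) both satisfy the triangle inequality on $K$. Each $\varphi_i'$ is then a norm on the finite-dimensional $F$-vector space $K$ over $(F, \varphi)$. Since $F$ is complete, the preceding theorem on normed vector spaces applies: each $\varphi_i'$ is topologically equivalent to the canonical norm built from a fixed basis of $K/F$, and hence they are equivalent to one another. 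Therefore $T_{\varphi_1'} = T_{\varphi_2'}$, and the characterization of equivalent valuations gives $\mfk{q}_1 = \mfk{q}_2$.

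For the infinite algebraic case, I would write $K = \bigcup_E E$ as the union of all finite intermediate extensions $F \subseteq E \subseteq K$. If $\mfk{q}_1$ and $\mfk{q}_2$ are two extensions of $\mfk{p}$ to $K$, then for each such $E$ the restrictions $\mfk{q}_i\big|_E$ are two extensions of $\mfk{p}$ to the finite extension $E/F$, and so coincide by the finite case already handled. Since every $x \in K$ lies in some finite $E$, the two extensions must agree globally.

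The main obstacle is the normalization step: the definition of norm insists on the triangle inequality, yet an arbitrary representative in $\mfk{q}_i$ only restricts to \emph{some} positive power of $\varphi$. Arranging both conditions simultaneously is routine bookkeeping with \Cref{valalpha} (raise each chosen representative to a suitable exponent, using that the equivalence class is closed under positive powers), but it is the one place where care is needed. Once this is done, the actual uniqueness argument is a direct application of the preceding norm-equivalence theorem.
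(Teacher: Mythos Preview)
Your approach is correct and matches the paper's: both invoke the preceding norm-equivalence theorem over a complete field to conclude that any two extensions induce the same topology on $K$, and both handle the infinite algebraic case by restriction to finite subextensions (the paper uses $F(x)$ for a point of disagreement, which is your union argument phrased contrapositively). The only cosmetic difference is that the paper, after obtaining $\varphi_1 = \varphi_2^\alpha$ from equal topologies, restricts to $F$ to force $\alpha = 1$; you stop at equality of prime divisors, which is all the statement asks, and you are in fact more explicit than the paper about the triangle-inequality normalization needed to apply the norm theorem.
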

\begin{proof}
First for the finite case, we choose $\varphi \in \mfk{p}$ such that $\| \varphi \| \leq 2$, with $\varphi_1, \varphi_2$ be two extensions of $\varphi$ to $K$. Then by above, $\varphi_1 = \varphi_2^\alpha$ because they induce the same topology. Furthermore, $\varphi_1|_F = (\varphi_2|_F)^\alpha \Rightarrow \varphi = \varphi^\alpha \Rightarrow \alpha =1$. We thus see that they are equal. 
\\ \indent Now for algebraic extension $K/F$, let $\varphi_1, \varphi_2$ be two distinct extensions of $\varphi$ to $K$, then there exists $x \in K$ such that $\varphi_1(x) \neq \varphi_2(x)$, but $\varphi_1$ and $\varphi_2 $ coincide in $F(x)$. Hence leads to contradiction.
\end{proof}
Let $K/F$ be finite with $\varphi \in \mfk{p}$ be valuation of $F$ and $\psi \in \mfk{q}$ be valuation of $K$ extending $\varphi$. Also, we have the canonical extension $\widetilde{\varphi}$ of $\varphi$ to $F_\mfk{p}$. We also have unique extension $\obar{\varphi}$ of $\widetilde{\varphi}$ to $\obar{F_\mfk{p}}$.
\\ \indent We first show that $F_\mfk{p} \subset K_\mfk{q}$. If we denote $[x_n]_\mfk{p} \in F_\mfk{p}$ for the $\mfk{p}$-Cauchy sequence with $x_n \in F$. Similarly, $[x_n]_\mfk{q}$ denote $\mfk{q}$-Cauchy sequence with $x_n \in K$. As $F \subset K$, we consider the map $[x_n]_\mfk{p} \mapsto [x_n]_\mfk{q}$. For $x_n, y_n \in F$,
\[
	[x_n]_\mfk{p} = [y_n]_\mfk{p} \Leftrightarrow  \lim_{n \rightarrow \infty} \varphi(x_n - y_n) = 0 \Leftrightarrow \lim_{n \rightarrow \infty} \psi(x_n - y_n) = 0  \Leftrightarrow [x_n]_\mfk{q} = [y_n]_\mfk{q}
\]
show that the map is well-defined and injective. Hence we proved the inclusion of $F_\mfk{p}$ into $K_\mfk{q}$.
\\ \indent $F_\mfk{p}/F$ and $K/F$ finite implies that $KF_\mfk{p}/F_\mfk{p}$ is finite. As $K$ and  $F_\mfk{p}$ both lies inside $K_\mfk{q}$, we have $KF_{\mfk{p}} \subset K_\mfk{q}$. If $\widetilde{\psi}$ denote the canonical extension of $\psi$, then we give $KF_{\mfk{p}}$ the valuation $\widetilde{\psi}|_{KF_\mfk{p}}$, then since it extends $\psi$, we see that it must be uniquely determined and $KF_\mfk{p}$ is complete. Observing the tower of fields,
\[
	K \subset KF_\mfk{p} \subset K_\mfk{q}
\]
we see that every completing via $\mfk{q}$, we get
\[
	K_\mfk{q} \subset KF_{\mfk{p}} \subset K_\mfk{q} \Rightarrow K_\mfk{q} = KF_\mfk{p}
\]
Hence $K_\mfk{q}/F_\mfk{p}$ is a finite extension. We are now ready to prove the extension theorem which is the following
\begin{thm}[Extension Theorem] Let $K/F$ be a finite extension and $\varphi \in \mfk{p}$ be a valuation of $F$, then every extension $\psi$ of $\varphi$ arises as the composite $\psi = \overline{\varphi} \circ \tau$ for some embedding $\tau: K \rightarrow \obar{F_\mfk{p}}$.
\end{thm}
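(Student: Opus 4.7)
The plan is to reduce the problem to the uniqueness of extensions in the complete setting, which was established immediately before this theorem. Given an extension $\psi$ of $\varphi$ to $K$, let $\mfk{q} = [\psi]$ be its equivalence class, and form the completion $K_\mfk{q}$ with the canonical extension $\widetilde{\psi}$. The paragraph preceding the theorem already furnishes the crucial structural fact $K_\mfk{q} = KF_\mfk{p}$, so $K_\mfk{q}/F_\mfk{p}$ is a finite (in particular algebraic) extension of complete fields, and $K$ sits inside $K_\mfk{q}$ with $\widetilde{\psi}|_K = \psi$.

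First I would verify that $\widetilde{\psi}|_{F_\mfk{p}} = \widetilde{\varphi}$. Both are valuations of $F_\mfk{p}$ agreeing with $\varphi$ on the dense subfield $F$ (since $\psi|_F = \varphi$), and they are continuous on $F_\mfk{p}$, so they must coincide by the general principle that continuous maps into a Hausdorff space are determined by their values on a dense subset. Next I would choose an $F_\mfk{p}$-embedding $\sigma: K_\mfk{q} \hookrightarrow \obar{F_\mfk{p}}$; such a $\sigma$ exists because $K_\mfk{q}/F_\mfk{p}$ is algebraic and $\obar{F_\mfk{p}}$ is an algebraic closure of $F_\mfk{p}$.

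The central computation is that $\obar{\varphi} \circ \sigma$ is a valuation of $K_\mfk{q}$ whose restriction to $F_\mfk{p}$ equals $\obar{\varphi}|_{F_\mfk{p}} = \widetilde{\varphi}$ (since $\sigma$ fixes $F_\mfk{p}$ pointwise). But $\widetilde{\psi}$ is also a valuation of $K_\mfk{q}$ restricting to $\widetilde{\varphi}$ on $F_\mfk{p}$. Since $F_\mfk{p}$ is $\widetilde{\varphi}$-complete and $K_\mfk{q}/F_\mfk{p}$ is a finite extension, the uniqueness of extension proved just above forces
\[
\obar{\varphi} \circ \sigma = \widetilde{\psi} \quad \text{on } K_\mfk{q}.
\]
Setting $\tau := \sigma|_K : K \to \obar{F_\mfk{p}}$, restriction to $K$ yields $\obar{\varphi}(\tau(x)) = \widetilde{\psi}(x) = \psi(x)$ for every $x \in K$, which is the desired identity $\psi = \obar{\varphi} \circ \tau$.

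The hard part is conceptual rather than computational: one must realize that the extension $\psi$ already encodes a copy of $K$ sitting inside the completion $K_\mfk{q}$, and that $K_\mfk{q}$ embeds canonically into $\obar{F_\mfk{p}}$ by the algebraic closure property. Once this geometric picture is in place, the equality $\obar{\varphi} \circ \sigma = \widetilde{\psi}$ drops out of the uniqueness theorem for extensions of a complete valuation to a finite extension, and restricting to $K$ finishes the proof. The only place where care is needed is in confirming that $\sigma$ can be chosen to fix $F_\mfk{p}$ and that $\widetilde{\psi}$ restricts to $\widetilde{\varphi}$ on $F_\mfk{p}$, so that the uniqueness theorem actually applies.
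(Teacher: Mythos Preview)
Your proposal is correct and follows essentially the same route as the paper: use the identification $K_\mfk{q} = KF_\mfk{p}$ established just before the theorem, pick an $F_\mfk{p}$-embedding of $K_\mfk{q}$ into $\obar{F_\mfk{p}}$, invoke uniqueness of the extension of $\widetilde{\varphi}$ to the finite extension $K_\mfk{q}/F_\mfk{p}$ to conclude $\obar{\varphi}\circ\sigma = \widetilde{\psi}$, and restrict to $K$. Your write-up is in fact more explicit than the paper's about the step $\widetilde{\psi}|_{F_\mfk{p}} = \widetilde{\varphi}$, which the paper leaves implicit in its inclusion $K_\mfk{q}\subset\obar{F_\mfk{p}}$.
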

\begin{proof}
	With the above notation, $K_\mfk{q} \subset \obar{F_\mfk{p}}$ shows that $\obar{\varphi}|_{K_\mfk{q}} = \widetilde{\psi}$ as $\widetilde{\psi}$ is the unique extension of $\widetilde{\varphi}$. Because $K_\mfk{q}/F_\mfk{p}$ finite, we have a $F_\mfk{p}$-embedding,
    \[
    	\tau : K_\mfk{q} \rightarrow \obar{F_\mfk{p}}
    \]
    Clearly, $\varphi \circ \tau$ is a valuation of $K_\mfk{q}$, and $(\obar{\varphi} \circ \tau)|_{F_\mfk{p}} = \obar{\varphi} \circ \tau|_{F_\mfk{p}} = \obar{\varphi}|_{F_\mfk{p}} = \widetilde{\varphi}$. We have a $F$-embedding, $\tau|_{\mfk{p}}:K \rightarrow \obar{F_\mfk{p}}$,
    \[
    	\widetilde{\varphi} = \obar{\varphi} \circ \tau|_{F_\mfk{p}}
    \]
\end{proof}
Before we classify the archimedean valuations of a number field $k$, we state the theorem due to Ostrowski.
\begin{thm}[Ostrowski] Let $K$ be complete respect to an archimedean valuation $|{\color{white} x}|$, then there exists an isomorphism $\sigma$ of $K$ to either $\mathbb{R}$ or $\mathbb{C}$. If $\mfk{p}_\infty$ denotes the prime divisor of $|{\color{white}x}|$ and $\mfk{q}_\infty$ denotes the prime divisor of the usual absolute value of either $\mathbb{R}$ or $\mathbb{C}$, then $\sigma^*(\mfk{q}_\infty) = \mfk{p}_\infty$. 
\end{thm}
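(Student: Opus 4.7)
Since $\varphi \in \mathfrak{p}_\infty$ is archimedean, \Cref{archchar} gives that $\{\varphi(n \cdot 1) : n \in \mathbb{Z}\}$ is unbounded, so $K$ has characteristic zero and $\mathbb{Q} \subseteq K$. The restriction $\varphi|_\mathbb{Q}$ is an archimedean valuation of $\mathbb{Q}$; by the classical Ostrowski theorem over $\mathbb{Q}$ (obtained by comparing $\varphi(N)^{1/\log N}$ across $N \geq 2$) it is equivalent to the usual absolute value, and by \Cref{valalpha} I may replace $\varphi$ by an equivalent valuation so that $\varphi|_\mathbb{Q}$ is exactly the standard absolute value. Since $K$ is complete, the closure of $\mathbb{Q}$ in $K$ is a completion of $\mathbb{Q}$ with respect to this absolute value, hence isomorphic to $\mathbb{R}$ by essential uniqueness of completions. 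So I may regard $\mathbb{R} \subseteq K$.

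The plan is now to show that either $K = \mathbb{R}$ or $K = \mathbb{C}$. If $K$ contains a square root of $-1$, let $L = K$; otherwise set $L = K(i)$, a degree-$2$ extension to which $\varphi$ extends uniquely (using the extension formula $\varphi'(x) = \sqrt{\varphi(N^L_K(x))}$ and uniqueness over a complete base field), and which remains complete since every finite-dimensional normed space over a complete valued field is complete. In either case $\mathbb{C} = \mathbb{R}(i) \subseteq L$, and once I show $L = \mathbb{C}$, the tower law immediately gives $K = \mathbb{C}$ (first case) or $K = \mathbb{R}$ (second case, since $i \notin K$ forces $[K : \mathbb{R}] = 1$). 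So I may assume $\mathbb{C} \subseteq K$ and aim to prove $K = \mathbb{C}$.

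Fix $\xi \in K$ and define $f : \mathbb{C} \to \mathbb{R}^{\geq 0}$ by $f(z) = |\xi - z|$. The reverse triangle inequality makes $f$ continuous, and $f(z) \geq \bigl||z| - |\xi|\bigr| \to \infty$ as $|z| \to \infty$, so $f$ attains its infimum $m \geq 0$ at some $z_0 \in \mathbb{C}$. I claim $m = 0$; otherwise, for any $z \in \mathbb{C}$ with $w := z - z_0$ of absolute value $|w| < m$, and any integer $n \geq 1$, I factor in $\mathbb{C}[X]$:
\[
(\xi - z_0)^n - w^n = \prod_{k=0}^{n-1} \bigl(\xi - z_0 - w \zeta^k\bigr), \qquad \zeta = e^{2 \pi i / n}.
\]
The $k = 0$ factor is $\xi - z$; each of the remaining $n - 1$ factors has absolute value at least $m$ since $z_0 + w \zeta^k \in \mathbb{C}$. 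Combining with the triangle bound $|(\xi - z_0)^n - w^n| \leq m^n + |w|^n$ yields
\[
|\xi - z| \cdot m^{n-1} \leq m^n + |w|^n, \qquad \text{so} \qquad |\xi - z| \leq m + m \left(\frac{|w|}{m}\right)^n.
\]
Letting $n \to \infty$ forces $|\xi - z| \leq m$, hence $|\xi - z| = m$ by minimality. So the whole open disc $\{z \in \mathbb{C} : |z - z_0| < m\}$ lies in the minimum set of $f$; iterating from any new center, this minimum set spreads to cover all of $\mathbb{C}$, contradicting $f(z) \to \infty$. Therefore $m = 0$, $\xi \in \mathbb{C}$, and $K = \mathbb{C}$.

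The isomorphism $\sigma$ is then the inverse of the inclusion $\mathbb{R} \hookrightarrow K$ or $\mathbb{C} \hookrightarrow K$, and the normalization of $\varphi$ in Paragraph~1 ensures $\sigma^*(\mathfrak{q}_\infty) = \mathfrak{p}_\infty$. The main obstacle is the cyclotomic minimum argument: the key insight is that factoring $X^n - w^n$ over the algebraically closed $\mathbb{C}$ produces $n - 1$ ``free'' lower bounds of $m$, whose cumulative weight $m^{n-1}$ overwhelms the error term $|w|^n$ precisely when $|w| < m$, propagating the minimum outward until it covers the whole plane. Secondary care is needed to invoke Ostrowski over $\mathbb{Q}$ cleanly and to verify that adjoining $i$ preserves completeness; both follow from results already established in this section.
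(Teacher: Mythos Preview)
Your argument is correct and is a standard variant, but it differs from the paper's in one structural choice. The paper never adjoins $i$: for $\zeta \in K$ it studies $f(z) = |\zeta^2 - (z+\bar z)\zeta + z\bar z|$ on $\mathbb{C}$, which makes sense because $z+\bar z,\, z\bar z \in \mathbb{R} \subseteq K$, and then runs the minimum argument through the auxiliary polynomial $G(x) = (g(x)-\varepsilon)^n - (-\varepsilon)^n$ with $g(x) = x^2 - (z_0+\bar z_0)x + z_0\bar z_0 + \varepsilon$, concluding that $\zeta$ satisfies a real quadratic and hence $[K:\mathbb{R}] \leq 2$. You instead pass to $L = K(i)$ so that $\mathbb{C} \subseteq L$, which buys you the much cleaner function $f(z) = |\xi - z|$ and the transparent factorization $(\xi - z_0)^n - w^n = \prod_k (\xi - z_0 - w\zeta^k)$; the ``spreading minimum'' mechanism is far more visible in your version. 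The trade-off is that you must extend the valuation to $K(i)$, and in the paper's logical order this is not free: the extension theorem proved just above (via $\mathcal{O}[t]$ and \Cref{henlem}) is explicitly nonarchimedean, so strictly speaking you owe a direct verification that $|a+bi| := |a^2+b^2|^{1/2}$ satisfies the valuation axioms on $K(i)$. That check is routine, and uniqueness and completeness then follow from the norm-equivalence theorem as you say, but the paper's real-quadratic device sidesteps the issue entirely.
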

\begin{proof} $K$ having archimedean valuation implies that $K$ has characteristic $0$, the prime field of $K$ is isomorphic to $\mathbb{Q}$. By \Cref{incluison-valuation}, we may assume that $\mathbb{R} \subset K$. Next we prove that $K$ is algebraic over $\mathbb{R}$, then $K$ is embedded in $\mathbb{C}$, hence $K= \mathbb{R}$ or $K = \mathbb{C}$.
\\ \indent So consider $\zeta \in K$ and a continuous map $f: \mathbb{C} \rightarrow \mathbb{R}$ defined by 
\[
	f(z) = |\zeta^2 - (z+\overline{z})\zeta+ z \overline{z}|
\]
with $z+\overline{z}, z \overline{z} \in \mathbb{R}$. We first observe that $ \displaystyle \lim_{z \rightarrow \infty} f(z) = \infty$. If we let $\zeta = a+bi$, then 
\[
	f(z) = |(a+bi)^2 - 2Re(z)(a+bi) + Re(z)^2 + Im(z)^2|
\]
then $Im(z) \rightarrow \infty$ easily shows that $f(z) \rightarrow \infty$.
\\ \indent Let $z_0$ be any complex number, then let $M=f(z_0)$, then there exists $N$ such that $|z| > N \Rightarrow |f(z)| > M$. There exists $z_m \in \mathbb{C}$ such that $f(z_m)$ is minimal in $f(\overline{B(0, N)})$ as $\overline{B(0,N)}$ is compact. Hence there exists a minimum value for $f$. We let $f(z_m) = m$. Because $\{m\}$ is closed in $\mathbb{R}$, we have
\[
	S = \{ z \in \mathbb{C} ~|~ f(z) = m \}
\]
is nonempty, bounded, and closed as $f$ is continuous. Also because $S$ is closed, there exists $z_0$ such that $|z_0| \geq |z|$ for all $z \in S$, i.e. the supremum. It suffices to show that $m=0$, then $\zeta$ is algebraic over $\mathbb{R}$.
\\ \indent Suppose for contradiction that $m>0$, then there exists $0 < \varepsilon < m$ and a polynomial in real coefficients
\[
	g(x) = x^2 - (z_0+\overline{z_0})x + z_0 \overline{z_0} + \varepsilon
\]
with roots $z_1, \overline{z_1} \in \mathbb{C}$. If $z_0 \in \mathbb{R}$, then we have 
\[
	g(x) = x^2 + z_0^2 + \varepsilon \Rightarrow z_1 = \sqrt{z_0^2 + \varepsilon}
\]
If $z_0 \in \mathbb{C} \backslash \mathbb{R}$, then we have that $Re(z_0) = Im(z_1)$. Let's denote it $a$. $z_0 = a+bi$ and $z_1 = a+ci$. Then we have $a^2+ c^2 = a^2 + b^2 + \varepsilon \Rightarrow c = \sqrt{b^2 + \varepsilon}$.
\\ \indent Because
\[
	z_0 \overline{z_0} + \varepsilon = z_1 \overline{z_1} \Rightarrow |z_1| > |z_0| \Rightarrow f(z_1) > m
\]
Let $n \in \mathbb{N}$ be fixed and consider another real polynomial
\[
	G(x) = [g(x) - \varepsilon]^n - (-\varepsilon)^n = \prod_{i=1}^{2n} (x - \alpha_i) = \prod_{i=1}^{2n} (x-\overline{\alpha_i})
\]
then clearly $G(z_1) = 0$, so we may assume that $z_1 = \alpha_1$ without loss of generosity. Now we substitute $\zeta$, then we get
\[
	|G(\zeta)|^2 = \prod_{i=1}^{2n} f(\alpha_i) \geq f(\alpha_1)m^{2n-1}
\]
Furthermore, 
\[
	|G(\zeta)| \leq |g(\zeta) - \varepsilon|^n + |-\varepsilon|^n = |\zeta^2 - (z_0+ \overline{z_0})\zeta +z_0 \overline{z_0}|^n + \varepsilon^n = f(z_0)^n + \varepsilon^n = m^n + \varepsilon^n
\]
then
\[
	f(\alpha_1)m^{2n-1} \leq (m^n + \varepsilon^n)^2
\]
\[
	\frac{f(\alpha_1)}{m} \leq \left ( 1+ \left( \frac{\varepsilon}{m} \right)^n \right)
\]
then as $n \rightarrow \infty$, we get $f(\alpha_1) \leq m$, which contradicts our assumption.
\end{proof}
\begin{prp} Let $\varphi$ be an archimedean valuation of $\mathbb{Q}$, then there exists $\alpha \in \mathbb{R}^{\geq 0}$ such that for all $x \in \mathbb{Q}$,
\[
	\varphi(x) = |x|^\alpha
\]
where $|{\color{white}x}|$ the usual absolute value of $\mathbb{Q}$.
\end{prp}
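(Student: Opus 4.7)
The plan is to form the $\mathfrak{p}$-completion $\widehat{\mathbb{Q}}$ of $\mathbb{Q}$ with respect to $\mathfrak{p} = [\varphi]$, feed it into Ostrowski's theorem, and then pull the conclusion back along the dense inclusion $\mu : \mathbb{Q} \hookrightarrow \widehat{\mathbb{Q}}$. Since $\varphi$ is archimedean, the canonical extension $\widetilde{\varphi}$ on $\widehat{\mathbb{Q}}$ is archimedean as well, so Ostrowski gives an isomorphism $\sigma : \widehat{\mathbb{Q}} \to \mathbb{R}$ or $\widehat{\mathbb{Q}} \to \mathbb{C}$ which satisfies $\sigma^*(\mathfrak{q}_\infty) = \widehat{\mathfrak{p}}$, where $\mathfrak{q}_\infty$ is the prime divisor of the usual absolute value.

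Next I would rule out the possibility $\widehat{\mathbb{Q}} \cong \mathbb{C}$. The composition $\sigma \circ \mu : \mathbb{Q} \to \mathbb{R}$ or $\mathbb{C}$ is a field homomorphism between characteristic-zero fields that sends $1$ to $1$, so it is forced to be the canonical inclusion. Therefore $\sigma(\mu(\mathbb{Q}))$ is just the usual copy of $\mathbb{Q}$. Because $\mu(\mathbb{Q})$ is dense in $\widehat{\mathbb{Q}}$ and $\sigma$ is a topological isomorphism (it identifies $T_{\widehat{\mathfrak{p}}}$ with $T_{\mathfrak{q}_\infty}$), $\mathbb{Q}$ must be dense in $\sigma(\widehat{\mathbb{Q}})$. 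But $\mathbb{Q} \subset \mathbb{R}$ is closed in $\mathbb{C}$, hence not dense in $\mathbb{C}$, so we conclude $\widehat{\mathbb{Q}} \cong \mathbb{R}$.

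Now the valuation $\widetilde{\varphi}$ on $\widehat{\mathbb{Q}}$ transports to a valuation $\widetilde{\varphi} \circ \sigma^{-1}$ on $\mathbb{R}$ which is equivalent (same prime divisor $\mathfrak{q}_\infty$) to the usual absolute value $|\cdot|$. By the equivalence theorem for valuations proved earlier, there exists $\alpha \in \mathbb{R}^{>0}$ such that $\widetilde{\varphi}(\sigma^{-1}(y)) = |y|^\alpha$ for every $y \in \mathbb{R}$. Restricting to $y \in \mathbb{Q}$, where $\sigma^{-1}(y) = \mu(y)$ and $\widetilde{\varphi} \circ \mu = \varphi$, gives $\varphi(x) = |x|^\alpha$ for all $x \in \mathbb{Q}$, which is exactly what we want.

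The only real obstacle is the dichotomy in Ostrowski, i.e.\ excluding the complex case; once that is handled, the rest is bookkeeping using the completion machinery and the earlier equivalence theorem. Everything else (existence of $\widehat{\mathbb{Q}}$, extension of $\varphi$ to $\widetilde{\varphi}$, equivalence implying a power relation) has already been set up in the excerpt.
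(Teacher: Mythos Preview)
Your argument is formally valid if Ostrowski's theorem is granted as a black box, but within this paper's development it is circular. Look again at the proof of Ostrowski: its very first move is ``by \Cref{incluison-valuation} we may assume $\mathbb{R}\subset K$.'' Concretely, one takes the inclusion $\mathbb{Q}\hookrightarrow K$, pulls back the archimedean prime divisor to $\mathbb{Q}$, completes, and identifies that completion with $\mathbb{R}$. That identification is precisely the content of the proposition you are trying to prove. So invoking Ostrowski to conclude that the $\varphi$-completion of $\mathbb{Q}$ is $\mathbb{R}$ presupposes the result.

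The paper instead gives the classical direct argument: for integers $m,n>1$, expand $m^t$ in base $n$, estimate $\varphi(m)^t$ by $(1+t\log m/\log n)\,n\,\max(1,\varphi(n))^{t\log m/\log n}$, take $t$-th roots, and let $t\to\infty$ to obtain $\varphi(m)\le\max(1,\varphi(n))^{\log m/\log n}$. Archimedeanness forces $\varphi(n)>1$ for all $n>1$, whence $\varphi(m)^{1/\log m}=\varphi(n)^{1/\log n}=e^{\alpha}$ for a fixed $\alpha>0$, and multiplicativity extends $\varphi(x)=|x|^{\alpha}$ to all of $\mathbb{Q}$. This proof is self-contained and is logically prior to Ostrowski; it is exactly what makes the step ``$\mathbb{R}\subset K$'' legitimate. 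Your route would be acceptable in a treatment that establishes Ostrowski without first embedding $\mathbb{R}$, but in this paper's ordering you need the elementary computation.
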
wh 
\begin{proof} Let $m,n \in \mathbb{Z}$ both greater than $1$. For any integer $t>0$, we can write
\[
	m^t = a_0 + a_1n + \cdots +a_s n^s
\]
where $a_i \in \mathbb{Z}$ and $0 \leq a_i < n$ with $a_s \neq 0$. Because $a_s \geq 1$, we have that $n^s \leq m^t$, which taking the log gives $s \log{n} \leq t \log{m}$. We thus have
\[
	s \leq t \dfrac{\log{m}}{\log{n}}
\]
Also we have that $\varphi(n) \leq \varphi(1) + \cdots + \varphi(1) = n$ for any $n$, so we get
\[
	\varphi(a_i) \leq a_i < n
\]
It follows that 
\[
	\begin{array}{lcl}
    	\varphi(m^t) & \leq & \varphi(a_0) + \cdots \varphi(a_s)\varphi(n)^s \\
        & \leq & n ( 1+ \cdots \varphi(n)^s) \\
        & \leq & n (s+1) M^s \text{ where } M = \max(1, \varphi(n)) \\
        & \leq & n \left ( 1+ t \dfrac{\log{m}}{\log{n}} \right )[\max(1, \varphi(n))]^{t(\log{m}/\log{n})}
    \end{array}
\]
Take the $t^{\text{th}}$ root of unity, we get 
\[
	\varphi(m) \leq \max(1, \varphi(n))^{(\log{m}/\log{n})}
\]
Now we assert that $\varphi(n)>1$ for $n>1$ because if there exists $n_0 >1$ with $\varphi(n_0) \leq 1$, then we get
\[
	\varphi(m) \leq 1 \text{ for all } m \in \mathbb{Z}
\]
which contradicts \Cref{archchar} which says that archimedean valuation has $\{ \varphi(m) ~|~ m \in \mathbb{Z} \}$ unbounded.  Hence we achieve a relationship
\[
	\varphi(m) \leq \varphi(n)^{\log{n}/\log{m}} \Rightarrow \varphi(m)^{1/\log{m}} \leq \varphi(n)^{1/\log{n}}
\]
Because $m,n$ were arbitrary, we get the equality
\[
	\varphi(m)^{1/\log{m}} = \varphi(n)^{1/\log{n}} = e^{\alpha}
\]
for some $\alpha \in \mathbb{R}^{> 0}$ because $\varphi(m) > 1 \Rightarrow \varphi(m)^{1/\log{m}} >1$. Hence we get
\[
	\varphi(n) = e^{\alpha \log{n}} = n^\alpha
\]
for any $n >1$, so for any $n \mathbb{Z}$, $\varphi(n) = |n|^\alpha$. By multiplicative property of $|{\color{white}x}|$, we conclude that $\varphi(x) = |x|^\alpha$ for all $x \in \mathbb{Q}$.
\end{proof}
Let $k$ be a number field and $\varphi$ be an archimedean valuation, then we have that $\varphi$ is an extension of $|{\color{white}x}|$, the absolute value of $\mathbb{Q}$. Because $k/\mathbb{Q}$ is an algebraic extension, by the existence theorem, all archimedean valuations arise from $\sigma: k \rightarrow \mathbb{C}$ as $\mathbb{C} = \overline{\mathbb{Q}_{|{\color{white}x}|}}$, i.e. classified by the complex embeddings of $k$, $Hom(k, \mathbb{C})$. We have
\[
	Hom(k, \mathbb{C}):=\{ \sigma_1, \ldots, \sigma_n \}
\]
where $\sigma_1, \ldots, \sigma_r$ is the real embeddings of $k$ and $\sigma_{r+1}, \overline{\sigma_{r+1}}, \ldots, \sigma_{r+s}, \overline{\sigma_{r+s}}$ which are complex embeddings of $k$. Thus we have $r+2s = n$. Because $|\sigma(x)| = \sigma(x)\overline{\sigma(x)} = |\overline{\sigma(x)}|$, we have $\sigma_i$ and $\overline{\sigma_i}$ induces the same topology. Now consider the following set of valuations $M_k$
\[
	\sigma_1, \ldots, \sigma_{r+s}, v_{\mfk{p}}
\]
where $v_\mfk{p}$ is the normalized discrete nonarchimedean valuations of $k$ with $\mfk{p}$ varies over all prime ideals of $k$. $M_k$ is called the \red{canonical set} of $k$. We will use $v \in M_k$ to mean both the archimedean and nonarchimedean valuations of $k$ to unify the notation.
\begin{prp} Let $v_1, v_2 \in M_k$, then $v_1$ and $v_2$ are not equivalent.
\end{prp}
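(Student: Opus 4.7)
The plan is to split into three cases according to whether each of $v_1, v_2$ is archimedean or nonarchimedean.

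First, if one of $v_1, v_2$ is archimedean and the other nonarchimedean, then the two cannot be equivalent: the observation made just after the definitions of archimedean and nonarchimedean primes shows that equivalence preserves whether $\|\varphi\| > 1$ or $\|\varphi\| = 1$, so a mixed pair is ruled out immediately.

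Next, suppose both are nonarchimedean, so $v_1 = v_{\mfk{p}_1}$ and $v_2 = v_{\mfk{p}_2}$ with $\mfk{p}_1 \neq \mfk{p}_2$. Equivalence would give $v_1 = s v_2$ for some $s \in \mathbb{R}^{>0}$, and then the uniqueness clause of the lemma which classifies every nonarchimedean exponential valuation of $k$ as $s v_\mfk{p}$ for a \emph{unique} prime ideal $\mfk{p}$ would force $\mfk{p}_1 = \mfk{p}_2$, a contradiction.

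The main case is when both are archimedean, say $v_j = |\sigma_j(\cdot)|$ for embeddings $\sigma_1, \sigma_2 : k \to \mathbb{C}$ chosen from distinct conjugate pairs. Suppose for contradiction that $v_1 = v_2^{\alpha}$ for some $\alpha \in \mathbb{R}^{>0}$. Restricting to $\mathbb{Q}$, on which each $\sigma_j$ acts as the identity, yields $|x| = |x|^{\alpha}$ for all $x \in \mathbb{Q}$, so $\alpha = 1$, and hence $|\sigma_1(x)| = |\sigma_2(x)|$ for every $x \in k$.

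To finish, I would invoke the primitive element theorem to write $k = \mathbb{Q}(\theta)$ and apply the preceding equality to $\theta - q$: for every $q \in \mathbb{Q}$,
\[
	|\sigma_1(\theta) - q| = |\sigma_2(\theta) - q|,
\]
and by density of $\mathbb{Q}$ in $\mathbb{R}$ (together with the continuity of $q \mapsto |z - q|$) the identity persists for every $q \in \mathbb{R}$. Writing $\sigma_j(\theta) = a_j + b_j i$ with $a_j, b_j \in \mathbb{R}$ and expanding $(a_1 - q)^2 + b_1^2 = (a_2 - q)^2 + b_2^2$ as an identity in $q$ yields $a_1 = a_2$ and $b_1^2 = b_2^2$, so $\sigma_2(\theta) \in \{\sigma_1(\theta), \overline{\sigma_1(\theta)}\}$. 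The convention that $M_k$ contains at most one representative from each conjugate pair then forces $\sigma_1 = \sigma_2$, a contradiction. The hard part is this final geometric argument in $\mathbb{C}$; once the reduction to $\alpha = 1$ and to a primitive element is in place, everything collapses to the observation that a point of $\mathbb{C}$ is determined, up to complex conjugation, by its distances to all real numbers.
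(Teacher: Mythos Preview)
Your proof is correct, but it follows a different route from the paper's. The paper gives a one-line argument that forward-references Lemma~\ref{><}: for distinct $v_1,v_2\in M_k$ there exists $x\in k$ with $\varphi_1(x)>1$ and $\varphi_2(x)<1$, and since the open unit ball $B(0,1)$ is determined by the topology (it is exactly the set of $x$ with $x^n\to 0$), the two topologies must differ. Your approach instead handles the three cases directly using tools already available at this point in the paper: the invariance of $\|\varphi\|>1$ versus $\|\varphi\|=1$ under equivalence for the mixed case, the uniqueness clause of the lemma $v=sv_{\mfk{p}}$ for the nonarchimedean case, and a geometric argument in $\mathbb{C}$ for the archimedean case. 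The paper's route is shorter and gives a uniform criterion, but at the cost of a forward reference; moreover, the proof of Lemma~\ref{><} in the archimedean--archimedean case simply asserts that distinct (non-conjugate) embeddings give $\varphi_1(x)\neq\varphi_2(x)$ for some $x$, which is precisely the content of your Case~3 argument. So your proof is self-contained and in fact supplies the justification the paper later takes for granted, while the paper's version packages everything into a single lemma that is reused for the approximation theorem.
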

\begin{proof}
	We will prove later in \Cref{><} that for any two distinct $v_i$ with its corresponding valuation $\varphi_i$, there exists $x \in k$ such that
    \begin{equation*} \tag{$\dagger$}
    	\varphi_1(x) > 1 \text{ and } \varphi_2(x) <1
    \end{equation*}
    Because the set $B(0,1)$ consists of $x \in k$ such that $x^n \rightarrow 0$ as $n \rightarrow \infty$, $B(0,1)$ is determined the topology. ($\dagger$) clearly shows that $B(0,1)$ is distinct elements in the canonical set, hence $v_1$ and $v_2$ are not equivalent. This easily extends to the fact that $k_{v_1}$ and $k_{v_2}$ are not isomorphic as their $B(0,1)$ are different when restricted to $k$.
\end{proof}
We conclude this section by observing that $M_k$ is, in fact, a system of representatives of all prime divisors of $k$, hence the name canonical set of $k$.
\section{Topology on local fields}
Let $\mathcal{O}$ be a Dedekind domain with $\mfk{p}$ be a prime ideal. $\mathcal{O}_\mfk{p}$, the localization of $\mathcal{O}$ at $\mfk{p}$ is a discrete valuation ring. We have $(\pi)$ is the unique maximal ideal of $\mathcal{O}_\mfk{p}$. We define
\[
	A_n:= \mathcal{O}_\mfk{p}/(\pi)^n
\]
which gives a inverse system with canonical epimorphisms. We let
\[
	\mathcal{O}_v^{a} := \varprojlim_m A_m
\]
where the superscript a refers to its algebraic construction. Because for any $m$, we have an isomorphism
\[
	\mathcal{O}/\mfk{p}^m \cong \mathcal{O}_\mfk{p}/(\pi)^m
\]
we get $\mathcal{O}_v^a  = \displaystyle \varprojlim_m \mathcal{O}/\mfk{p}^m$ which is analogous to the construction of $p$-adic numbers from integers. By the definition of the inverse limit, the elements in $\mathcal{O}_v^a$ is of the form $(x_i)$ where $\pi_m(x_m) = x_{m-1}$ with $\pi_m : A_m \rightarrow A_{m-1}$ the canonical epimorphism.
\\ \indent If $|\mathcal{O}_k/\mfk{p}| = p^{f}$ with $(p) = \mfk{p} \cap \mathbb{Z}$, then we can choose a system of representatives of $a_0, \ldots, a_{p^f-1}$ with $0$ included, then for any $(x_i)$ n $\mathcal{O}_v^a$, 
\[
	x_1 \equiv c_0 ~ (\bmod{\pi})
\]
So we consider $x_1 - c_0 = (x_i - c_0)$, then because $x_1 - c_0$ is divisible by $\pi$, the first coordinate is $0$, then because we have the commutative diagram
\begin{center}
	\begin{tikzpicture}
    	\matrix (m)[matrix of math nodes, nodes={anchor=center}, row sep=7em, column sep=2em]{
        	\mathcal{O}_\mfk{p} &  & \\
            \mathcal{O}_\mfk{p}/(\pi)^n & & \mathcal{O}_\mfk{p}/(\pi)^m \\
        };
        \draw[->]
        	(m-1-1) edge node[left]{$\pi_n$} (m-2-1)
            (m-2-1) edge node[below]{$\pi^n_m$} (m-2-3)
            (m-1-1) edge node[right]{$\pi_m$} (m-2-3);
    \end{tikzpicture}
\end{center}
with $\pi^n_m: \mathcal{O}_\mfk{p}/(\pi)^n \rightarrow \mathcal{O}_\mfk{p}/(\pi)^m$ a canonical epimorphism. We see that all $x_i - c_0$ are divisible by $\pi$, hence
\[
	\begin{array}{lcl}
    	x - c_0 & = & (x_i - c_0) \\
        & = & (\pi x_i') \\
        & = & \pi(x_i')
    \end{array}
\]
we may continue the process and realize that
\[
	\begin{array}{lcl}
    	x & = & c_0 + (x-c_0)  \\ 
        & = & c_0 + (\pi x') \\ 
        & = & c_0 + \pi ( c_1 - (x' - c_1)) \\
        & = & c_0 + c_1 \pi + c_2 \pi^2 + \cdots
    \end{array}
\]
where $c_i$ are chosen from the system of representatives. Hence we clearly get that all elements in $\mathcal{O}_v^a$ can be uniquely expressed as a infinite sum of the form
\[
	\sum_{n=0}^\infty c_n \pi^n
\]
which correspond to $x = (x_i)$ with $x_i = \sum_{n=0}^i c_n \pi^n$. 
\\ \indent Now we will construct \emph{the ring of integers} $\mathcal{O}_v$ topologically when $v$ is a nonarchimedean valuation. We may define the following invariants
\begin{enumerate}
	\item $\mathcal{O}_v = \{ x \in k_v ~|~ v(x) \geq 0 \} = \{ x \in k_v ~|~ \varphi_v(x) \leq 1 \}$, the \red{valuation ring}  of $k_v$.
    \item $\mfk{p}_v =  \{ x \in k_v ~|~ v(x) > 0 \} = \{ x \in k_v ~|~ \varphi_v(x) < 1 \}$, the \red{maximal ideal} of $\mathcal{O}_v$
    \item $U_v =  \{ x \in k_v ~|~ v(x) = 0 \} = \{ x \in k_v ~|~ \varphi_v(x) = 1 \}$, the \red{unit group} of $\mathcal{O}_v$.
\end{enumerate}
We observe that $v$ is discrete as $v$ is nonarchimedean exponential valuation of a number field, hence $\mathcal{O}_v$ is a discrete valuation ring with some fixed prime number $\pi \in \mathcal{O}_v$.  
\begin{prp} We have the isomorphism
\[
	\mathcal{O}_v/\mfk{p}_v^n \cong \mathcal{O}_\mfk{p}/\mfk{p}\mathcal{O}_\mfk{p}^n
\]
In particular, $\mathcal{O}_v/\mfk{p}_v$ is finite.
\end{prp}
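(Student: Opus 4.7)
The plan is to build an explicit isomorphism from the natural inclusion of $\mathcal{O}_\mfk{p}$ into $\mathcal{O}_v$. Since $k \subset k_v$ and the valuation $v$ on $k_v$ restricts to $v_\mfk{p}$ on $k$, we have $\mathcal{O}_\mfk{p} = \{x \in k \mid v_\mfk{p}(x) \geq 0\} = k \cap \mathcal{O}_v$, so the inclusion $i: \mathcal{O}_\mfk{p} \hookrightarrow \mathcal{O}_v$ is well-defined. Composing with the quotient map, define
\[
    \phi: \mathcal{O}_\mfk{p} \longrightarrow \mathcal{O}_v/\mfk{p}_v^n, \qquad y \mapsto y + \mfk{p}_v^n.
\]
The goal is to show $\phi$ is surjective with kernel $(\mfk{p}\mathcal{O}_\mfk{p})^n$.

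For surjectivity, I would use the density of $k$ in $k_v$ established in the completion theorem. Given any $x \in \mathcal{O}_v$, density produces some $y \in k$ with $v(x - y) \geq n$. Writing $y = x - (x-y)$ and applying the nonarchimedean inequality,
\[
    v(y) \geq \min(v(x),\, v(x-y)) \geq \min(0, n) = 0,
\]
so in fact $y \in k \cap \mathcal{O}_v = \mathcal{O}_\mfk{p}$, and by construction $\phi(y) = x + \mfk{p}_v^n$. For the kernel, note that $\mfk{p}_v = \pi \mathcal{O}_v$ because $\pi \in \mfk{p} \setminus \mfk{p}^2$ still has the minimal positive valuation in the completion; hence $\mfk{p}_v^n = \pi^n \mathcal{O}_v$. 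If $y \in \mathcal{O}_\mfk{p}$ satisfies $\phi(y) = 0$, then $v(y) \geq n$, but $v|_k = v_\mfk{p}$, so $v_\mfk{p}(y) \geq n$, meaning $y \in \pi^n \mathcal{O}_\mfk{p} = (\mfk{p}\mathcal{O}_\mfk{p})^n$. The reverse inclusion is immediate since $\pi \in \mfk{p}_v$. The first isomorphism theorem then yields
\[
    \mathcal{O}_\mfk{p}/(\mfk{p}\mathcal{O}_\mfk{p})^n \;\cong\; \mathcal{O}_v/\mfk{p}_v^n.
\]

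For the final sentence, specialize to $n=1$: this gives $\mathcal{O}_v/\mfk{p}_v \cong \mathcal{O}_\mfk{p}/\mfk{p}\mathcal{O}_\mfk{p}$, and the standard fact that localization preserves the residue field identifies the latter with $\mathcal{O}_k/\mfk{p}$. Since $\mathcal{O}_k$ is a finitely generated $\mathbb{Z}$-module and $\mfk{p}$ contains some rational prime $p$, the quotient $\mathcal{O}_k/\mfk{p}$ is a finite-dimensional $\mathbb{F}_p$-vector space that is also a field, hence finite.

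The main obstacle is really just the approximation step for surjectivity, and even that is cosmetic: the only subtlety is to realize that the naive approximant $y \in k$ supplied by density lies automatically in $\mathcal{O}_\mfk{p}$, which requires the nonarchimedean triangle inequality. Everything else reduces to bookkeeping about how $\pi$ generates the maximal ideal in both the local ring and its completion.
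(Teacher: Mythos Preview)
Your argument is correct and follows the same overall strategy as the paper: define the natural map $\mathcal{O}_\mfk{p}\to\mathcal{O}_v/\mfk{p}_v^n$, identify the kernel using that the same uniformizer $\pi$ works in both rings, and prove surjectivity by approximation. The one point of contrast is in the surjectivity step: the paper first establishes the stronger fact that $\mathcal{O}_v$ is the topological closure of $\mathcal{O}_\mfk{p}$ in $k_v$, which it does by writing an approximant $x_n=a_n/b_n\in k$ with $\varphi_\mfk{p}(b_n)=1$ and then solving a congruence $b_ny_n\equiv a_n\pmod{\pi^n}$ to land in $\mathcal{O}_\mfk{p}$; only afterwards does it read off surjectivity. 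Your route is more economical: you approximate by an arbitrary $y\in k$ and let the ultrametric inequality $v(y)\ge\min(v(x),v(x-y))\ge 0$ do the work of forcing $y\in\mathcal{O}_\mfk{p}$. This bypasses the congruence-solving entirely, though of course the closure statement you skip is of independent interest.
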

\begin{proof} 
	We first need to show that $\mathcal{O}_v$ is the closure of $\mathcal{O}_\mfk{p}$. For any Cauchy sequence $\{ x_n \}$ of $\mathcal{O}_v$ that converges to $x$, we have that $\varphi_\mfk{p}(x_n) \leq 1 \Rightarrow \varphi_\mfk{p}(x) \leq 1 \Rightarrow x \in \mathcal{O}_v$, hence $\mathcal{O}_v$ is closed. Now let $x \in \mathcal{O}_v \backslash \mathcal{O}_\mfk{p}$. We have $x = \displaystyle \lim_{n \rightarrow \infty} x_n$ for $x_n \in k$. Because $\varphi_\mfk{p}$ is discrete, we have that there exists $n_0$ such that $n \geq n_0 \Rightarrow \varphi_\mfk{p}(x) = \varphi_\mfk{p}(x_n) \leq 1$. This is same as saying that $x_n = a_n/b_n$ for some $a_n, b_n \in \mathcal{O}_k$ with $\varphi_\mfk{p}(b_n) = 1$. Because we know that $\mfk{p} = (\pi)$ for some prime element, we can always find $y_n$ such that
    \[
    	b_ny_n \equiv a_n ~(\bmod{~\pi^n})
    \]
    Then We have $\varphi_\mfk{p}(x_n - y_n) \leq \dfrac{1}{(p^f)^n}$. $x = \displaystyle \lim_{n \rightarrow \infty} y_n$, hence $x \in \overline{\mathcal{O}_\mfk{p}}$, the closure of $\mathcal{O}_\mfk{p}$. Combining the two, we se e that $\mathcal{O}_v$ is a closure of $\mathcal{O}_\mfk{p}$.
\\ \indent Now consider the map
\[
	a \mapsto a~(\bmod{~\mfk{p}_v^n}) \text{ defined for } \mathcal{O}_\mfk{p} \rightarrow  \mathcal{O}_v/\mfk{p}_v^n
\]
The same prime element in $\mathcal{O}_\mfk{p}$ is also a prime element for $\mathcal{O}_v$ as they are both discrete valuation ring. Thus the kernel is simply $\mfk{p}\mathcal{O}_\mfk{p}^n$, so we now show that the map is surjective. Let $x \in \mathcal{O}_v$, then there exists $a \in \mathcal{O}_\mfk{p}$ such that
\[
	\varphi_\mfk{p}(x - a) \leq \frac{1}{(p^f)^n}
\]
because it either lies inside $\mathcal{O}_\mfk{p}$ or is a limit point, then by definition $x-a \in \mfk{p}^n_v$, i.e. $x \cong a ~(\bmod{~\mfk{p}_v^n})$. The isomorphism has been established.
\end{proof}
Hence we can choose a system of representative of $\mathcal{O}_v/\mfk{p}_v$ from $\mathcal{O}_v$ containing $0$ which will be denoted $\mathcal{R}$ where $p^f = |\mathcal{O}_v/\mfk{p}_v|$ with $(p) = \mfk{p} \cap \mathbb{Z}$ and $f = [\mathcal{O}_k/\mfk{p}:\mathbb{Z}/(p)]$.  
\begin{thm} Every element $x$ in $k_v$ can be uniquely expressed into the form
\[
	x = \sum_{i=m}^\infty a_i \pi^i
\]
where $m \in \mathbb{Z}$ with $c_i \in \mathcal{R}$. Furthermore, every such infinite sum represents an element of $k_v$ with $v(x) = m$.
\end{thm}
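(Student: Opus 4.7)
My plan is to reduce to the case $x \in \mathcal{O}_v$ by factoring out a power of the uniformizer, then run the standard $\pi$-adic expansion algorithm and invoke the ultrametric inequality. Because the isomorphism $\mathcal{O}_v/\mfk{p}_v^n \cong \mathcal{O}_\mfk{p}/(\pi)^n\mathcal{O}_\mfk{p}$ has already been established, the combinatorics of the expansion will mirror the inverse limit description of $\mathcal{O}_v^a$ given earlier in the excerpt; the new content is only to verify that everything is valid for the topological object $\mathcal{O}_v$ inside $k_v$.

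First I would handle existence on $\mathcal{O}_v$. Given $y \in \mathcal{O}_v$, because $\mathcal{R}$ is a full set of representatives of $\mathcal{O}_v/\mfk{p}_v$ containing $0$, there is a unique $a_0 \in \mathcal{R}$ with $y - a_0 \in \mfk{p}_v = (\pi)$, say $y - a_0 = \pi y_1$ with $y_1 \in \mathcal{O}_v$. Repeating this construction yields sequences $\{a_i\} \subset \mathcal{R}$ and $\{y_i\} \subset \mathcal{O}_v$ with
\[
    y - \sum_{i=0}^{n-1} a_i \pi^i = \pi^n y_n \in \mfk{p}_v^n.
\]
Since $v(\pi^n y_n) \geq n \to \infty$, the partial sums $S_n = \sum_{i=0}^{n-1} a_i \pi^i$ form a Cauchy sequence converging to $y$ in $k_v$, giving the desired expansion. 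Conversely, any formal series $\sum_{i \geq 0} a_i \pi^i$ with $a_i \in \mathcal{R}$ has partial sums that are Cauchy (the tail has $v$-value at least $n$), hence converges in the complete field $k_v$ to an element of $\mathcal{O}_v$.

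Next I would treat the general case: for nonzero $x \in k_v$ set $m = v(x) \in \mathbb{Z}$, so $v(\pi^{-m} x) = 0$ and $\pi^{-m} x \in U_v \subset \mathcal{O}_v$. Apply the previous step to obtain $\pi^{-m} x = \sum_{i \geq 0} b_i \pi^i$ with $b_i \in \mathcal{R}$, and note that $b_0 \neq 0$ in $\mathcal{R}$ because $\pi^{-m} x$ represents a nonzero class in $\mathcal{O}_v/\mfk{p}_v$. Multiplying by $\pi^m$ gives $x = \sum_{i \geq m} a_i \pi^i$ with $a_m \neq 0$. The statement $v(x) = m$ in the converse direction then follows from the nonarchimedean inequality applied termwise: $v(a_m \pi^m) = m$ since $a_m \in \mathcal{R} \setminus \{0\}$ is a unit (its class mod $\mfk{p}_v$ is nonzero, hence $v(a_m) = 0$), while every later term has $v$-value $\geq i > m$, so $v$ of the series equals the strict minimum $m$.

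For uniqueness, suppose $\sum_{i \geq m} a_i \pi^i = \sum_{i \geq m} a_i' \pi^i$ with $a_i, a_i' \in \mathcal{R}$. Multiplying through by $\pi^{-m}$ reduces to the case $m = 0$, so we may assume both series lie in $\mathcal{O}_v$. Reducing modulo $\mfk{p}_v$ gives $a_0 \equiv a_0' \pmod{\mfk{p}_v}$, and since $\mathcal{R}$ contains exactly one representative of each class we conclude $a_0 = a_0'$. Subtracting and dividing by $\pi$ yields a new identity of the same form, and induction on $i$ forces $a_i = a_i'$ for all $i$. The main obstacle here is purely bookkeeping, namely keeping track of the passage between $\mathcal{O}_v$ and $k_v$ via multiplication by $\pi^m$; the analytic content (completeness of $k_v$ and the ultrametric inequality) does all the real work.
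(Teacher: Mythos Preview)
Your proposal is correct and follows essentially the same route as the paper: factor out $\pi^m$ to reduce to an element of $\mathcal{O}_v$ (the paper reduces to a unit $u \in \mathcal{O}_v^*$, you to $\mathcal{O}_v$ and then observe the leading digit is nonzero), then iteratively peel off residues modulo $\mfk{p}_v$ and use completeness to conclude convergence. Your write-up is in fact more thorough than the paper's, which leaves the uniqueness argument and the converse direction (that every formal series converges with $v(x)=m$) largely implicit.
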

\begin{proof}
	Let $x = \pi^m u$ with $u \in \mathcal{O}_v^*$. Let $\mathcal{O}_v/\mfk{p}_v \cong \mathcal{O}_\mfk{p}/\mfk{p}\mathcal{O}_\mfk{p}$, then $\obar{u} \in \mathcal{O}_v/\mfk{p}_v$ has a unique nonzero representation $a_0 \in R$. Hence we have $u =a_0 + \pi b_1$ for some $b_1 \in \mathcal{O}_v$. Assume $a_0, \ldots, a_{n-1} \in \mathcal{R}$ satisfies
    \[
    	u = a_0 + \cdots + a_{n-1}\pi^{n-1} + \pi^n b_n
    \]
    with $a_i$ uniquely determined by $b_n = a_n + \pi b_{n+1}$, then by the same process, we obtain
    \[
    	u=a_0 + \cdots + a_n\pi^n + \pi^{n+1}b_{n+1}
    \]
    Then we have uniquely determined $\sum_{i=m}^\infty a_i \pi^i$ by $u$, and this converges to $u$ because $\pi^{n+1}b_{n+1}$ converges to $0$.
\end{proof}
\noindent We consider the open balls
\[
	B(0, p^{-fs})
\]
where $s \in \mathbb{Z}$, then
\[
	y \in B(0, p^{-fs}) \Leftrightarrow v_\mfk{p}(y) > s \Leftrightarrow y = \displaystyle \sum_{n=s+1}^\infty c_n \pi^n
\]
Hence we immediately see that
\begin{enumerate} 
	\item $\mathcal{O}_v = B(0, p^f)$,
    \item $\mfk{p}_v = B(0, 1)$,
    \item $U_v = \mathcal{O}_v - \mfk{p}_v$.
\end{enumerate}
by comparing the definition. Now for any fundamental open sets $B(0, \gamma)$, by discreteness, we have
\[
	B(0, \gamma) = \overline{B(0, \gamma-\varepsilon)}
\]
for sufficiently small $\varepsilon >0$. Hence all fundamental open sets are closed. It follows that $\mathcal{O}_v, \mfk{p}_v, U_v$ are all open and closed. In fact, for any fundamental open sets $B(x, p^{-fs})$ and $B(0, p^f)$ we have a homeomorphism
\[
	y \mapsto \pi^{-s-1}(y-x)
\]
\begin{thm} $k_v$ is locally compact topological field with compact subsets $\mathcal{O}_v$ and $\mathcal{O}_v^*$.
\end{thm}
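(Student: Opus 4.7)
The plan is to reduce everything to the compactness of $\mathcal{O}_v$. Once this is in hand, local compactness of $k_v$ is immediate: $\mathcal{O}_v = B(0, p^f)$ is an open neighborhood of $0$, so its translate $x + \mathcal{O}_v$ is an open neighborhood of any $x \in k_v$, and translation being a homeomorphism of the additive topological group carries compactness of $\mathcal{O}_v$ over. Compactness of $\mathcal{O}_v^*$ follows from $\mathcal{O}_v^* = \mathcal{O}_v \setminus \mfk{p}_v$ being closed in $\mathcal{O}_v$, since $\mfk{p}_v$ was just shown to be open, and closed subsets of compact sets are compact. Finally, the topological field structure of $k_v$ is verified by checking continuity of $+,-,*,{}^{-1}$ in the topology $T_{\mfk{p}_v}$; these verifications are the same calculations used for $\mathbb{R}$ in \Cref{Rlocallycompact}, with $\varphi_v$ replacing the ordinary absolute value. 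The nonarchimedean triangle inequality handles $+$ and $-$, the multiplicativity of $\varphi_v$ handles $*$, and the identity $\varphi_v(x^{-1}-y^{-1}) = \varphi_v(y-x) / (\varphi_v(x)\varphi_v(y))$ combined with local boundedness of $\varphi_v$ away from $0$ on $k_v^*$ handles inversion.

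For the main step, I would use the unique power series expansion just established: every $x \in \mathcal{O}_v$ has a unique representation $x = \sum_{i=0}^{\infty} a_i \pi^i$ with $a_i \in \mathcal{R}$. This gives a bijection $\Phi: \mathcal{O}_v \to \prod_{i=0}^{\infty} \mathcal{R}$, and I would promote it to a homeomorphism when the codomain has the product topology and each factor $\mathcal{R}$ is given the discrete topology. The key is the identification $B(0, p^{-fs}) = \mfk{p}_v^{s+1}$: two elements of $\mathcal{O}_v$ agree modulo $\mfk{p}_v^{s+1}$ precisely when their first $s+1$ series coefficients coincide, and this matches exactly the basic open sets in $\prod \mathcal{R}$ that fix finitely many initial coordinates. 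Since $|\mathcal{R}| = p^f$ is finite, $\mathcal{R}$ is compact and discrete, so Tychonoff makes $\prod_{i=0}^{\infty} \mathcal{R}$ compact, and $\mathcal{O}_v$ inherits compactness across $\Phi$.

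The main obstacle I anticipate is establishing the homeomorphism $\Phi$ rigorously in both directions: that truncations $\sum_{i=0}^{n} a_i \pi^i$ converge to $\sum_{i=0}^{\infty} a_i \pi^i$ in $T_{\mfk{p}_v}$ (which is immediate from $v(\pi^{n+1}b_{n+1}) \to \infty$), and conversely that approximation of $x$ by $y$ in the valuation topology forces agreement of initial coefficients (which follows from the uniqueness clause in the expansion theorem together with the same identification $B(0,p^{-fs}) = \mfk{p}_v^{s+1}$). This is essentially bookkeeping rather than a deep point, but it is where the care is needed. If one prefers to avoid the product identification, an equivalent route is a direct sequential-compactness proof: given a sequence in $\mathcal{O}_v$, use the pigeonhole principle on the coefficient $a_0$ (finite set $\mathcal{R}$), then on $a_1$, and so on, and diagonalize to obtain a subsequence whose truncations agree to arbitrarily high order; this subsequence is Cauchy in $T_{\mfk{p}_v}$, hence converges in the complete field $k_v$ to a limit in the closed set $\mathcal{O}_v$, proving sequential compactness and hence compactness in the metrizable space $\mathcal{O}_v$.
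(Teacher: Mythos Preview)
Your proposal is correct and follows the same overall architecture as the paper: topological field verifications are delegated to the $\mathbb{R}$ computation in \Cref{Rlocallycompact}, compactness of $\mathcal{O}_v^*$ is deduced from closedness inside $\mathcal{O}_v$, and compactness of $\mathcal{O}_v$ comes from Tychonoff applied to a product of finite discrete spaces.

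The one genuine difference is the choice of product. You coordinatize $\mathcal{O}_v$ by its $\pi$-adic digits, obtaining a \emph{bijection} onto the full product $\prod_{i \ge 0} \mathcal{R}$, and your bookkeeping task is to check this bijection is a homeomorphism. The paper instead uses the product $\prod_{m \ge 1} A_m$ with $A_m = \mathcal{O}_\mfk{p}/(\pi)^m$, identifies $\mathcal{O}_v$ with the inverse limit sitting inside as a \emph{closed subset}, and its bookkeeping task is the explicit verification that the complement of the inverse-limit condition is open. Your version buys a cleaner target (the whole product rather than a subspace) at the cost of invoking the full expansion theorem; the paper's version needs only the quotient maps and avoids uniqueness of digits, but has to argue closedness. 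Your alternative sequential-compactness route via pigeonhole and diagonalization is not in the paper and is a nice self-contained fallback that sidesteps Tychonoff entirely.
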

\begin{proof}
	The proof that $k_v$ is a topological field coincides with the proof that $\mathbb{R}$ is a topological space in \Cref{Rlocallycompact}. Let's consider 
    \[
    	A_m:= \mathcal{O}_\mfk{p}/\mfk{p}\mathcal{O}_\mfk{p}^m
    \]
then we have that $A_m$ is finite, hence compact, so by the Tychonoff theorem, we have that the product
\[
	\prod_{m=1}^\infty A_m
\]
is compact. Let $(a_m) \notin \mathcal{O}_v$, then there exists $m_0$ such that $\varphi_{m_0}(a_{m_0}) \neq a_{m_0 -1}$ with $\varphi_{m_0} : A_m \rightarrow A_{m-1}$, the canonical epimorphism. Now consider the set
\[
	\{ a_0 \} \times \cdots \times \{ a_{m_0} \} \times \prod_{m>m_0} A_m
\]
is open in the product topology, and is disjoint from $\mathcal{O}_v$ since beginning of every element has $\varphi_{m_0}(a_{m_0}) \neq a_{m_0 - 1}$. This shows that $\mathcal{O}_v^c$ is open, and thus $\mathcal{O}_v$ closed. $\mathcal{O}_v$ is a closed subset of $\prod A_m$, hence $\mathcal{O}_v$ is compact. $\mathcal{O}_v^*$ is a closed subset of $\mathcal{O}_v$, hence compact. 
\end{proof}
We conclude by saying that because any two fundamental open sets are homeomorphic, and fundamental open sets are open, closed, and compact simultaneously.

We have established in the previous section that there was a unique representation of the same form for $\mathcal{O}_v$, so we have established a algebraic isomorphism
\[
	\mathcal{O}_v \cong \mathcal{O}_v^a
\]
We then define $v_\mfk{p}^a : \mathcal{O}_v^a \rightarrow \mathbb{Z} \cup \{ \infty \}$ by $v_\mfk{p}^a(x) = m$, where $m$ is the smallest such that $c_m$ is nonzero. Under the isomorphism above, we see that $v_\mfk{p}^a$ induces an exponential valuation of $\mathcal{O}_v$. This coincide with normalized discrete valuation $v_\mfk{p}$.
\\ \indent For all $\mathcal{O}_\mfk{p}/(\pi)^n$, we give discrete topology and give the product 
\[
	\prod \mathcal{O}_\mfk{p}/(\pi)^n
\]
the product topology. Finally, one gives $\mathcal{O}_v^a$ the subspace topology. We will continue to denote $\mathcal{O}_\mfk{p}/(\pi)^n = A_n$. Then the fundamental open sets in the product topology is of the form,
\[
	\prod_{n=1}^\infty B_n \cap \mathcal{O}_v^a
\]
where $B_n \subset A_n$, and $B_n = A_n$ for almost all $n$. Then because of the finiteness the above set is of the form
\[
	U = \left ( \prod_{n=1}^m B_n \times \prod_{n=m+1}^\infty A_n \right ) \cap \mathcal{O}_v^a
\]
then any $ x \in U$, we have that $x \in B(x, p^{-fm}) \subset U$. In fact, if $y \in B(x, p^{-fm})$, then 
\[
	y - x = \sum_{n=m+1}^\infty c_n \pi^n \Rightarrow y = x + \sum_{n=m+1}^\infty c_n \pi^n
\]
then because $x \in U$ and the $n$-th coefficient where $n>m$ doesn't matter, so we have $y \in U$. 
This shows that the metric topology is finer.
\\ \indent Conversely, if $x \in k_v$ with 
\[
	x = \sum_{n=1}^\infty c_n \pi^n 
\]
then we have that 
\[
	B(x, p^{-fm}) = \prod_{n=1}^m \{ c_n \} \times \prod_{n=m+1}^\infty A_n
\]
hence we have that the metric topology and the topology induced by the inverse limit coincide. We have thus established an algebraic and topological isomorphism
\[
	\begin{array}{lcl}
		\mathcal{O}_v & \cong & \mathcal{O}_v^a \\
        & = & \varprojlim \mathcal{O}_\mfk{p}/(\pi)^n \\
        & \cong & \varprojlim \mathcal{O}_k / \mfk{p}^n
     \end{array}
\]
\section{Hensel's Lemma}
For a field $F$ with a nonarchimedean prime divisor $\mfk{p}$, we denote $\mathcal{O}$ or $\mathcal{O}_{v_\mfk{p}}$ to be its valuation ring and denote $\mfk{p}$ to be the unique prime ideal of $\mathcal{O}$. $\kappa = \mathcal{O}/\mfk{p}$ will always denote the residue class field of $F$ with respect to $\mfk{p}$.
\begin{lem}[Hensel's Lemma] Let $F$ be complete with respect to the nonarchimedean prime divisor $\mfk{p}$, and let $f \in \mathcal{O}[t]$ be primitive, i.e. $\overline{f}  \neq 0$ in $\kappa[t]$. Suppose $\overline{f}$ admits a factorization
\[
	\overline{f} = GH \text{ in } \kappa[t]
\]
with $G,H$ relatively prime polynomials in $\kappa[t]$, then there exists $g,h \in \mathcal{O}[t]$ such that
\begin{enumerate}
	\item $f=gh$,
    \item $\overline{g} = G, \overline{h} = H$,
    \item $deg(g) = deg(G)$.
\end{enumerate}
\end{lem}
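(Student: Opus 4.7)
The plan is to construct $g$ and $h$ as limits of sequences $g_n, h_n \in \mathcal{O}[t]$ produced by successive approximation, relying on the $\mfk{p}$-adic completeness of $F$ (and hence of $\mathcal{O}$). The invariants I will maintain at each stage $n$ are: $\overline{g_n} = G$, $\overline{h_n} = H$, $\deg g_n = \deg G$, $\deg h_n \leq \deg f - \deg G$, and the crucial congruence $f \equiv g_n h_n \pmod{\mfk{p}^{n+1}\mathcal{O}[t]}$. With these invariants in place, the coefficients of $g_{n+1} - g_n$ and $h_{n+1} - h_n$ will live in $\mfk{p}^{n+1}$, so each coefficient sequence is a $\mfk{p}$-Cauchy sequence in the complete ring $\mathcal{O}$; the limits $g, h$ will have the desired degrees and reductions, and passing to the limit in $f \equiv g_n h_n \pmod{\mfk{p}^{n+1}}$ yields $f = gh$.

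For the base case, I would pick any lifts $g_0, h_0 \in \mathcal{O}[t]$ of $G, H$ such that $\deg g_0 = \deg G$ and $\deg h_0 \leq \deg f - \deg G$ (possible since $\bar f = GH$ forces $\deg G + \deg H \leq \deg f$). The reduction of $f - g_0 h_0$ in $\kappa[t]$ is $\bar f - GH = 0$, so $f - g_0 h_0$ has coefficients in $\mfk{p}$, giving the case $n=0$.

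The inductive step is the heart of the argument. Assume $g_n, h_n$ have been constructed, and write $f - g_n h_n = \pi^{n+1} w_n$ for some $w_n \in \mathcal{O}[t]$ with $\deg w_n \leq \deg f$, where $\pi$ is a uniformizer of $\mathcal{O}$. Because $G$ and $H$ are coprime in the PID $\kappa[t]$, Bezout gives $A, B \in \kappa[t]$ with $AG + BH = 1$; lifting $A, B$ to $\mathcal{O}[t]$ and multiplying by $w_n$ produces a relation $Gv^\sharp + Hu^\sharp \equiv \overline{w_n} \pmod{\mfk{p}}$ in $\mathcal{O}[t]$ when the right-hand side is interpreted correctly. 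To control degrees I would perform Euclidean division in $\kappa[t]$: write $\overline{Bw_n} = qG + \tilde u$ with $\deg \tilde u < \deg G$, set $\tilde v = \overline{Aw_n} + qH$, so that $G\tilde v + H \tilde u = \overline{w_n}$ in $\kappa[t]$ with $\deg \tilde u < \deg G$ and $\deg \tilde v \leq \deg f - \deg G$. Lifting $\tilde u, \tilde v$ to $u, v \in \mathcal{O}[t]$ with the same degree bounds, I set $g_{n+1} = g_n + \pi^{n+1} u$ and $h_{n+1} = h_n + \pi^{n+1} v$. A short computation
\[
f - g_{n+1}h_{n+1} = \pi^{n+1}\bigl(w_n - Gv\text{-lift} - Hu\text{-lift}\bigr) - \pi^{2(n+1)} uv
\]
shows, after reducing mod $\mfk{p}^{n+2}$, that $f \equiv g_{n+1}h_{n+1} \pmod{\mfk{p}^{n+2}\mathcal{O}[t]}$. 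The degree bound $\deg \tilde u < \deg G$ guarantees that the leading coefficient of $g_{n+1}$ still agrees with that of $g_n$, so $\deg g_{n+1} = \deg G$ is preserved.

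The main obstacle is precisely this degree bookkeeping: a naive Bezout correction would destroy $\deg g_n = \deg G$ and in fact could change the leading behavior of $h_n$ as well. The Euclidean-division trick is what keeps the two sequences bounded-degree, which in turn is what makes their coefficient-wise limits well-defined polynomials. Once that is handled, convergence and the verification $f = gh$ are immediate from completeness and continuity of polynomial multiplication in the $\mfk{p}$-adic topology.
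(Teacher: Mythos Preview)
Your approach is essentially the same as the paper's: successive approximation $g_n,h_n$ built from a Bezout relation for $G,H$, with Euclidean division used to keep $\deg g_n=\deg G$ and $\deg h_n\le \deg f-\deg G$, followed by a coefficientwise limit in the complete ring $\mathcal{O}$. The paper performs the division in $\mathcal{O}[t]$ after first reducing to $G$ monic, whereas you divide in $\kappa[t]$ and lift; both versions control the degrees correctly.

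One point to flag: you write $f-g_nh_n=\pi^{n+1}w_n$ with $\pi$ a uniformizer, which silently assumes the valuation is discrete. The lemma as stated allows an arbitrary nonarchimedean prime divisor, and the paper accommodates this by choosing $\pi\in\mfk{p}$ to be an actual coefficient realizing the minimum valuation $\varepsilon=\min\bigl(\overline{v}(f-g_1h_1),\,\overline{v}(\alpha g_1+\beta h_1-1)\bigr)$ rather than a uniformizer; the induction is then run modulo powers of this specific $\pi$. For the discrete case (in particular for the completions $k_v$ that the paper actually uses) your argument is correct as written, but if you want the full generality of the statement you should replace ``uniformizer'' by this ad hoc choice of $\pi$.
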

\begin{proof}
	To simplify the notation , we will use the following convention $deg(f) = d_f$. It is clear that $d_{\overline{f}} \leq d_f$ and $d_G + d_H = d_{\overline{f}} \leq d_f$, hence $d_G \leq d_f - d_G$. We let $g_1, h_1 \in \mathcal{O}[t]$ with $\overline{g_1} = G$ and $\overline{h_1} =G$, with $d_{g_1} = d_G$ and $d_{h_1} = d_H$. This can be easily done by observing that
    \[
    	G = \overline{a_s}t^s + \cdots + \overline{a_0}
    \]
    can be represented by 
    \[
    	g_1 = a_st^s + \cdots + a_0
    \]
We may assume that $G$ is monic because $\overline{f} = (\overline{a_s}^{-1}G)(\overline{a_s}H)$, so from now on, $G$ and $g_1$ is monic. Because $(G,H)=1$, there exists $\alpha, \beta \in \mathcal{O}[t]$ such that $\overline{\alpha}G + \overline{\beta}H = 1$. This equality and the modulo $\mfk{p}$ factorization of $\overline{f}$ show that $f-g_1h_1, \alpha g_1 + \beta h_1 - 1 \in \mfk{p}[t]$, i.e. all coefficient of these polynomials are all nonzero. Let $v \in \mfk{p}$, then $\overline{v}$ be the canonical extension of $v$ to $F(t)$. We let
\[
	\varepsilon = \min \left ( \overline{v}(f-g_1h_1), \overline{v}(\alpha g_1 + \beta h_1 -1) \right )
\]
If $\varepsilon = \infty$, then we have that $f-g_1h_1$ has all of its coefficient $0$,  which gives the desired $f$ and $g$, so we assume that $0 < \varepsilon < \infty$. Because there are only finite number of coefficients, there exists $\pi \in \mfk{p}$, one of the coefficient, such that $v(\pi) = x$. For all $i \in \mathbb{N}$, we would like to find $g_i, h_i$ such that
\begin{enumerate}
	\item $f \equiv g_i h_i ~(\bmod{~\pi^i})$,
    \item $\overline{g_i} = G$ and $ \overline{h_i} = H$,
    \item $g_i \equiv g_{i-1} ~(\bmod{~\pi^{i-1}})$, $h_i \equiv h_{i-1} ~(\bmod{~\pi^{i-1}})$ for $i \geq 2$,
    \item $d_{g_i} = d_G$ and $d_{h_i} \leq d_f - d_G$.
\end{enumerate}
Let $c_i$ denote the coefficient of $f-g_1h_1$, then $v(c_i) \geq v(\pi) \Rightarrow v(c_i/\pi) = v(c_i) - v(\pi) \geq 0$. This shows that there exists $d_i \in \mathcal{O}$ such that $c_i = \pi d_i \Rightarrow f-g_1 h_1 \in \pi \mathcal{O}[t]$, hence i) is satisfied. ii) and iv) is satisfied by construction. Now suppose that we have established the result for $n-1$ with $n \geq 2$ and also that we have found 
\[
	\begin{array}{lcl}
    	g_n & = & g_{n-1} + \pi^{n-1}u \\
        h_n & = & h_{n-1} + \pi^{n-1}v
    \end{array}
\]
for some $u,v \in \mathcal{O}[t]$, then it is clear that iii) is satisfied. Notice that we have not used the induction hypothesis of iii), so the construction can be used for $i=2$. Also because $\pi \in \mfk{p}$, it is clear that
\[
	\overline{g_n} = \overline{g_{n-1}}+ \overline{\pi^{n-1}u} = \overline{g_{n-1}} = G \text{ and } \overline{ h_n } = \overline{ h_{n-1} } + \overline{\pi^{n-1}v} = \overline{h_{n-1}} = H
\]
Now to show we observe the following equivalence
\[
	\begin{array}{lcl}
    	f \equiv g_n h_n ~(\bmod{~\pi^n}) & \Leftrightarrow & f \equiv g_{n-1}h_{n-1} + \pi^{n-1}(g_{n-1}v + h_{n-1}u) + \pi^{2n-2}uv ~(\bmod{~\pi^n}) \\
        & \Leftrightarrow & f \equiv g_{n-1}h_{n-1} + \pi^{n-1}(g_{n-1}v + h_{n-1}u) ~(\bmod{~\pi^n}) \\
        & \Leftrightarrow & f - g_{n-1}h_{n-1} \equiv \pi^{n-1} (g_{n-1}v + h_{n-1}u) ~(\bmod{~\pi^n}) \\
        & \Leftrightarrow & w:= \dfrac{f-g_{n-1}h_{n-1}}{\pi^{n-1}}  \equiv g_{n-1}v + h_{n-1}u ~(\bmod{~\pi})
    \end{array}
\]
for the last equality comes from the observation that
\[
	x \equiv \pi^{n-1}y ~(\bmod{~\pi^n}) \Leftrightarrow x = \pi^{n-1}y + \pi^nz \Leftrightarrow x/\pi^{n-1} = y + \pi z \Leftrightarrow x/\pi^{n-1} \equiv y ~(\bmod{~\pi})
\]
We now start to actually find $u,v \in \mathcal{O}[t]$. Because $g_1$ is monic, by Euclidean algorithm for polynomials, we have that $w \beta = g_1 q + u$ in $\mathcal{O}[t]$ with $d_u < d_{g_1}$.
\[
	\alpha g_1 + \beta h_1 \equiv 1 \Leftrightarrow w \alpha g_1 + w \beta h_1 \equiv w ~(\bmod{~\pi})
\]
Plugging in $w \alpha g_1 + (g_1 q + u)h_1 \equiv w \Rightarrow (w \alpha + q h_1)g_1 + u h_1 \equiv w ~(\bmod{~\pi})$. Denote $v$ to be the polynomial obtained by replacing all coefficient of $w\alpha + q h_1$ that is divisible by $\pi$ into $0$. We still get the modulo equivalence $\alpha v + \beta u = w$, and gives us i). 
\\ \indent We are left to prove iv). 
\[
	d_u < d_{g_1} \Rightarrow d_{g_n} = deg(g_{n-1}+\pi^{n-1}u) = d_{g_{n-1}} = d_G 
\]
Suppose for contradiction that $d_{h_n} > d_f - d_G$,
\[
	h_n = h_{n-1} + \pi^{n-1}v \text{ and } d_{h_{n-1}} \leq d_f - d_G \Rightarrow d_v > d_f - d_G
\]
\[
	d_{g_{n-1}} = d_G \text{ and } d_{h_{n-1}} \leq d_f - d_G \Rightarrow deg(f- g_{n-1}h_{n-1}) \leq d_f \Rightarrow w \leq d_f
\]
Now we come back to the equality $vg_1 + uh_1 \equiv w ~(\bmod{~\pi})$. Because $deg(vg_1) \geq deg(uh_1)$, we have that the leading coefficient of $vg_1 + uh_1$ is not divisible by $\pi$. However, because $d_w < d_{vg_1}$ we have that
\[
	vg_1 + uh_1 = w + \pi\gamma
\]
for some $\gamma$ with $d_\gamma > w$. The right hand side has leading coefficient divisible by $\pi$ leading into contradiction.
\\ \indent For all $i$, we have $g_i(x) = a_0^{(i)} + \cdots + a_{r-1}^{(i)}t^{r-1} + t^r$ with $r = d_G$. First let $m \geq n \geq N$, then we claim that
\[
	a_j^{(n)} \equiv a_j^{(m)} ~(\bmod{~\pi^N}) 
\]
which can be easily established by induction from $a_j^{(n)} \equiv a_j^{(n-1)} ~(\bmod{~\pi^{n-1}})$. Furthermore, we define $\varphi(x) = e^{-v(x)}$. Let $q=e^{-\varepsilon}$, then 
\[
	\pi^N | x \Leftrightarrow v(x) \geq v(\pi)^N = N\varepsilon \Leftrightarrow e^{-v(x)} \leq e^{-N\varepsilon} \Leftrightarrow \varphi(x) \leq q^N
\]
Thus we have that $\{ a_j^{(i)} \}$ is a Cauchy sequence for fixed $j$, hence converges to $a_j \in \mathcal{O}$ as $F$ is $\mfk{p}$-complete. For $n \geq i$,
\[
	\varphi(a_j^{(i)} - a_j^{(n)}) \leq q^{i} \Rightarrow \varphi(a_j^{(i)} -a_j) \leq q^i \Rightarrow a_j - a_j^{(i)}  \equiv 0 ~(\bmod{~\pi^i})
\]
Clearly, if $a \equiv 0 (~\bmod{~\pi^i}) \Rightarrow \varphi(a) < q^i$ for all $i$, thus $\varphi(a)=0 \Rightarrow a = 0$. We can finally conclude that
\[
	f \equiv g_i h_i \equiv gh ~(\bmod{~\pi^i}) \text{~for all i~} \Rightarrow f=gh
\]
$d_g = d_{g_i} = d_G$ with $\overline{g} = \overline{g_i} = G$ and $\overline{h} = \overline{h_i} =H$ complete our proof.
\end{proof}
\begin{cor} \label{henlem} If $f = a_0 + \cdots + a_nt^n \in F[t]$ is irreducible, then $\overline{v}(f) = \min(v(a_0), \ldots, v(a_n)) = \min(v(a_0), v(a_n))$. In particular, if $f$ is monic and irreducible, i.e. $a_n=1$, then 
\[
	f \in \mathcal{O}[t] \Leftrightarrow a_0 \in \mathcal{O}
\]
\end{cor}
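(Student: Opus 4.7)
The plan is to derive the first (and main) statement by contradiction using Hensel's lemma, and then read off the ``in particular'' statement as an immediate corollary. Throughout I write $c = \overline{v}(f) = \min_i v(a_i)$; the inequality $c \leq \min(v(a_0), v(a_n))$ is automatic, so I only need to rule out the strict inequality.

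First I would reduce to a primitive polynomial. Pick any index $j$ with $v(a_j) = c$ (such a $j$ exists since $f$ is a nonzero polynomial and the minimum is achieved), set $b = a_j \in F^*$, and consider $f/b \in F[t]$. Scaling by the unit $b^{-1} \in F^*$ preserves irreducibility in $F[t]$, while the coefficients of $f/b$ all have nonnegative valuation and at least one of them (the $j$-th) has valuation $0$. Hence $f/b \in \mathcal{O}[t]$ is primitive, i.e.\ $\overline{f/b} \neq 0$ in $\kappa[t]$. Now suppose for contradiction that $c < \min(v(a_0), v(a_n))$. Then $v(a_0/b) > 0$ and $v(a_n/b) > 0$, so the reduction $\overline{f/b} \in \kappa[t]$ has vanishing constant term and vanishing coefficient in degree $n$.

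Let $r$ be the smallest index $i$ with $\overline{(a_i/b)} \neq 0$; then $r \geq 1$, and writing $\overline{f/b} = t^{r} H(t)$ with $H \in \kappa[t]$, we have $H(0) \neq 0$ and $\deg(t^{r} H) = \deg \overline{f/b} < n$ (since the $n$-th coefficient reduces to $0$), whence $r < n$. Because $H(0) \neq 0$ we have $\gcd(t^{r}, H) = 1$ in $\kappa[t]$, so Hensel's lemma (applied to the primitive polynomial $f/b$ with the coprime factorization $\overline{f/b} = t^r \cdot H$) produces $g, h \in \mathcal{O}[t]$ with $f/b = gh$, $\overline{g} = t^r$, $\overline{h} = H$, and $\deg g = r$. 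Since $\deg g = r \geq 1$ and $\deg h = \deg(f/b) - r = n - r \geq 1$, this is a nontrivial factorization of $f/b$ in $F[t]$, and therefore $f = (bg)h$ is a nontrivial factorization of $f$ in $F[t]$, contradicting irreducibility. The only obstacle worth thinking about is this coprimality plus degree bookkeeping; once the right $r$ is isolated, the Hensel machinery does the rest.

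For the monic case, $a_n = 1$ gives $v(a_n) = 0$, so by what we just proved $\overline{v}(f) = \min(v(a_0), 0)$. Now $f \in \mathcal{O}[t]$ is equivalent to $v(a_i) \geq 0$ for all $i$, i.e.\ $\overline{v}(f) \geq 0$, which in turn is equivalent to $v(a_0) \geq 0$, i.e.\ $a_0 \in \mathcal{O}$. This yields both directions of the stated equivalence.
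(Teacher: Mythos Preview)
Your proof is correct and follows essentially the same approach as the paper: reduce to a primitive polynomial by scaling, assume for contradiction that both end coefficients have positive valuation, and invoke Hensel's lemma to produce a nontrivial factorization. The only cosmetic difference is the coprime factorization fed into Hensel: you factor $\overline{f/b} = t^{r}\cdot H$ using the \emph{smallest} index $r$ with nonvanishing reduction, whereas the paper uses the trivial factorization $\overline{f} = \overline{f}\cdot 1$ after observing that the \emph{largest} such index is strictly less than $n$; either choice yields a lift with both factors of positive degree and hence the same contradiction.
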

\begin{proof}
	First $f$ primitive has the following equivalences
    \[
    	f \text{ primitive } \Leftrightarrow a_j \notin \mfk{p} \Leftrightarrow v(a_j) = 0 \Leftrightarrow \overline{v}(f) = 0
    \]
    where $a_j$ is some coefficient in $f$. If $\overline{v}(g) = \alpha \neq 0$, then $\overline{v}(g/\alpha) = 0$, hence all polynomial has the form $g = \alpha f$  where $f$ is primitive. If we have proved the result for primitive polynomials, then 
    \[
    	\overline{v}(g) = \overline{v}(\alpha f) = \overline{v}(\alpha) + \overline{f} = \overline{v}(\alpha) + \min(v(a_0), v(a_n)) = \min(v(\alpha a_0), v(\alpha a_n))
    \]
    So we may assume $f$ to be primitive. Now suppose $\min(v(a_0), v(a_n)) > 0$, then there exists $r$ such that $v(a_r) = 0$, $v(a_i) > 0$ for $i \geq r$ and $0 < r < n$. The existence follows from $\overline{v}(f) = 0$. $\overline{f} = \overline{f} \cdot 1 = GH$ modulo $\mfk{p}$. But $d_G = r < n$ gives contradiction. 
    \\ \indent Now further suppose that $f$ is monic, then $\overline{v}(f) = \min(v(1), v(a_0)) = v(a_0)$, so if $v(a_0) \geq 0 \Rightarrow v(a_i) \geq 0$ for all $i$. $\overline{v}(f) \geq 0 \Leftrightarrow f \in \mathcal{O}[t]$ completes the proof. 
\end{proof}
\section{Product Formula}
If $k$ is a number field, $\mathcal{O}_k$ is the algebraic integer of $k$ with $\mfk{p} \subset \mathcal{O}_k$ a prime ideal. Let $\mfk{p} |p$, then we say that $\mfk{p}$ \red{lies over} $p \in \mathbb{Z}$. If the prime number is indexed with $i$, i.e. $\mfk{p}_i$, then we denote its corresponding exponential valuation $v_i$. Also, we note its corresponding valuation by $\varphi_i$.
 \\ \indent Recall that we may define the absolute norm of $N: J_k \rightarrow \mathbb{Q}$ where $J_k$ is the group of all fractional ideals of $k$ by $\mfk{A} \mapsto |\mathcal{O}/\mfk{A}|$. Many classical textbooks show that $N$ is indeed extended from the norm from $k$ to $\mathbb{Q}$ in that if $x \in k$, and $(x)$ is the principal ideal generated by $x$, then 
\[	N((x)) = |N^k_\mathbb{Q}(x)| \]
\begin{prp} Let $\mfk{p}_1, \ldots, \mfk{p}_m$ be lying over $p$, and let $\varphi_i$ and $\varphi$ be the corresponding valuations respectively, then we have
\[
	\varphi(N(x)) = \prod_{i=1}^m \varphi_i(x)
\]
\end{prp}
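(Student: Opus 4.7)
The plan is to reduce the identity to a statement about the ideal factorization of $(x)$ in $\mathcal{O}_k$, then compare $p$-adic contributions on each side. Since the passage preceding the proposition reminds us that $N((x)) = |N^k_\mathbb{Q}(x)|$, the left-hand side can be replaced by $\varphi(N((x)))$, where $\varphi$ denotes the $p$-adic valuation on $\mathbb{Q}$ normalized by $\varphi(p) = p^{-1}$. Before anything else, I would fix the normalization of the $\varphi_i$ explicitly: among the family $\varphi_{v_i}(x) = q^{-v_i(x)}$ introduced in the exponential-valuation discussion, the canonical choice making the product formula hold is $q = N\mfk{p}_i := |\mathcal{O}_k/\mfk{p}_i| = p^{f_i}$, where $f_i = f(\mfk{p}_i | p)$ is the residue degree. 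Flagging this normalization at the outset is the single conceptual point of the argument; everything afterwards is bookkeeping.

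First I would factor the fractional principal ideal $(x) = \prod_{\mfk{q}} \mfk{q}^{v_\mfk{q}(x)}$, the product taken over all nonzero prime ideals of $\mathcal{O}_k$ (with almost all exponents zero). Using the multiplicativity of the absolute norm, which is the defining property of $N: J_k \to \mathbb{Q}$ recalled just above the proposition, I obtain
\[
N((x)) \;=\; \prod_{\mfk{q}} N(\mfk{q})^{v_\mfk{q}(x)}.
\]
Split this product over primes lying over $p$ and primes not lying over $p$. For $\mfk{q} \nmid p$, $N(\mfk{q})$ is a power of a rational prime different from $p$, hence a $p$-adic unit, so $\varphi(N(\mfk{q}))=1$ and these factors contribute nothing after applying $\varphi$.

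It remains to evaluate the $p$-adic valuation of the surviving product. For each $\mfk{p}_i | p$ one has $N(\mfk{p}_i) = p^{f_i}$, so
\[
\varphi(N(x)) \;=\; \varphi\!\left(\prod_{i=1}^{m} p^{f_i v_i(x)}\right) \;=\; \prod_{i=1}^{m} p^{-f_i v_i(x)} \;=\; \prod_{i=1}^{m} (p^{f_i})^{-v_i(x)} \;=\; \prod_{i=1}^{m} \varphi_i(x),
\]
which is the desired identity. The main obstacle is not any deep computation but a careful matching of normalizations: without pinning $\varphi_i$ to $q = N\mfk{p}_i$ the equality can fail by a power, so the proof must open by making this choice unambiguous. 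Once that is done, the three ingredients (prime factorization of $(x)$, multiplicativity of the absolute norm, and the elementary fact that $\varphi$ annihilates $p$-unit integers) combine immediately.
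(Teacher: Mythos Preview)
Your proof is correct and follows essentially the same route as the paper: factor $(x)$ into prime ideals, apply the multiplicative absolute norm, discard the contribution from primes not lying over $p$ since their norms are $p$-adic units, and then match $N(\mfk{p}_i)=p^{f_i}$ against the normalization $\varphi_i(x)=p^{-f_i v_i(x)}$. Your explicit flagging of the normalization $q=N\mfk{p}_i$ is a welcome clarification that the paper leaves implicit, but the underlying computation is identical.
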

\begin{proof} By the unique factorization,
\[
	(x) = \prod_{i=1}^m \mfk{p}_i^{n_i} \times \mfk{A}
\]
where $\mfk{A}$ is relatively prime to $\mfk{p}_i$ for all $i$. By definition $v_i(x) = n_i$ where $v_i$ is the corresponding exponential valuation of $\mfk{p}_i$, hence with $f_i =[\mathcal{O}_k/\mfk{p}_i: \zmod{p}]$, we have that $\varphi_i(x) = p^{-f_i n_i}$. Then we have that
\[
	\prod_{i=1}^m \varphi_i(x) = p^{-\sum f_i n_i}
\]
On the other hand, we have that
\[
	N(x) = \prod_{i=1}^m N(\mfk{p}_i)^{n_i} \times N(\mfk{A}) = p^{\sum f_i n_i} \times N(\mfk{A})
\]
because $\mfk{A}$ is the product of power of primes other than $p$, we have that
\[
	\varphi(N(x)) = p^{-\sum f_i n_i}
\]
hence we may conclude the proof.
\end{proof}
\begin{thm} Let $x \in k^*$, then we have
\[
	\prod_{v \in M_k} \varphi_v(x)^{n_v} = 1
\]
where $n_v =1$ when $v$ is nonarchimedean or real archimedean, and $n_v = 2$ when $v$ is complex archimedean.
\end{thm}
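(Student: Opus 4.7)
The plan is to split the product over $M_k$ into its archimedean and nonarchimedean parts, evaluate each piece in terms of the field norm $N^k_{\mathbb{Q}}$, and then reduce the whole statement to the product formula over $\mathbb{Q}$, which is immediate from unique factorization of integers. Since only finitely many $v_{\mathfrak{p}}(x)$ are nonzero (the prime factorization of $(x)$ involves finitely many primes), and only finitely many embeddings exist, the product is in fact finite, so there are no convergence issues to worry about.

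First I would handle the archimedean contribution. By the classification via Ostrowski's theorem, the archimedean places of $k$ correspond to the embeddings $\sigma_1, \ldots, \sigma_r, \sigma_{r+1}, \ldots, \sigma_{r+s}$, where the first $r$ are real and the last $s$ represent complex-conjugate pairs $\{\sigma_{r+j}, \overline{\sigma_{r+j}}\}$. The valuation attached to $\sigma_i$ is the pullback $|\sigma_i(\cdot)|$ of the usual absolute value. Using $|\sigma(x)\overline{\sigma}(x)| = |\sigma(x)|^2$ and the standard identity $N^k_{\mathbb{Q}}(x) = \prod_{i=1}^n \sigma_i(x)$, I get
\[
\prod_{v \text{ arch}} \varphi_v(x)^{n_v} \;=\; \prod_{i=1}^r |\sigma_i(x)| \cdot \prod_{j=1}^s |\sigma_{r+j}(x)|^2 \;=\; \prod_{i=1}^n |\sigma_i(x)| \;=\; |N^k_{\mathbb{Q}}(x)|.
\]
This is precisely where the factor $n_v = 2$ at complex places justifies its existence.

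Next I would handle the nonarchimedean contribution by grouping primes $\mathfrak{p}$ of $\mathcal{O}_k$ according to the rational prime $p$ they lie over. The preceding proposition applied to $p$ gives $\prod_{\mathfrak{p}\,|\,p} \varphi_{\mathfrak{p}}(x) = \varphi_p(N^k_{\mathbb{Q}}(x))$, where $\varphi_p$ is the $p$-adic valuation of $\mathbb{Q}$ normalized so that $\varphi_p(y) = p^{-v_p(y)}$. Summing over rational primes yields
\[
\prod_{v \text{ nonarch}} \varphi_v(x) \;=\; \prod_{p} \varphi_p\bigl(N^k_{\mathbb{Q}}(x)\bigr).
\]

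Finally I would combine the two pieces and set $y = N^k_{\mathbb{Q}}(x) \in \mathbb{Q}^*$. What remains is the product formula over $\mathbb{Q}$:
\[
|y| \cdot \prod_p \varphi_p(y) \;=\; 1,
\]
which follows from unique factorization: writing $y = \pm \prod_p p^{e_p}$ with $e_p \in \mathbb{Z}$ and only finitely many nonzero, one has $|y| = \prod_p p^{e_p}$ and $\varphi_p(y) = p^{-e_p}$, whose product telescopes to $1$. I do not anticipate any serious obstacle; the only subtlety is ensuring that the chosen representatives for archimedean and nonarchimedean places in $M_k$ are precisely those for which the preceding norm-compatibility proposition and the equality $\prod |\sigma_i(x)| = |N^k_{\mathbb{Q}}(x)|$ hold without additional scaling factors, which is exactly what the definition of $M_k$ together with the weights $n_v$ guarantees.
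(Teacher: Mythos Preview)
Your proposal is correct and follows essentially the same route as the paper: split the product into archimedean and nonarchimedean parts, identify the archimedean piece with $|N^k_{\mathbb{Q}}(x)|$ via the embeddings, group the nonarchimedean primes by the rational prime they lie over and apply the preceding proposition to get $\prod_{\mathfrak p\mid p}\varphi_{\mathfrak p}(x)=\varphi_p(N^k_{\mathbb Q}(x))$, and then invoke the product formula for $\mathbb{Q}$. The only cosmetic difference is that the paper verifies the $\mathbb{Q}$ case by multiplicativity on a single prime $p$, whereas you write out the unique factorization directly; these are the same computation.
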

\begin{proof} We first let $k=\mathbb{Q}$, then by multiplicity we prove for $x=p$, a prime number. Let $\varphi_q$ be the valuation for prime $q$, then 
\[
	\varphi_q(p) = \left \{ \begin{array}{lcl}
    	1/p & \text{ if } & q = p \\
        1 & \text{ if } & q \neq p
    \end{array} \right .
\]
Then clearly, 
\[
	\prod_{v \in M_\mathbb{Q}} \varphi_v(p)^{n_v} = |p|\varphi_{p}(p) = |p|/p = 1
\]
Now for general number field $k$, we have that
\[ \begin{array}{lcl}
	\displaystyle \prod_{v \in M_k} \varphi_v(x)^{n_v} & = & \displaystyle \prod_{v \in M_k \backslash S_\infty} \varphi_v(x) \prod_{v \in S_\infty} \varphi_v(x)^{n_v} \\
    & = & \displaystyle \prod_{v \in M_k \backslash S_\infty} \varphi_v(x) |N(x)| \\ 
    & = & \displaystyle \prod_{\mfk{p}} \varphi_\mfk{p}(x) |N(x)| \\
    & = & \displaystyle \prod_{p} \prod_{\mfk{p}|p} \varphi_\mfk{p}(x) |N(x)| \\
    & = & \displaystyle \prod_{p} \varphi_{p}(N(x)) |N(x)| \\
    & = & \displaystyle \prod_{v \in M_\mathbb{Q}} \varphi_v(N(x)) = 1
\end{array} \]
\end{proof}
\begin{lem} \label{><} Let $v_1, v_2 \in M_k$ with $k$ a number field be two distinct valuation in $M_k$, then there exists $x \in k$ such that $\varphi_1(x) > 1$ and $\varphi_2(x) < 1$.
\end{lem}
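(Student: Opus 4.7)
The plan is case analysis by the archimedean/nonarchimedean type of $v_1$ and $v_2$, since at this stage in the development I cannot appeal to inequivalence of $v_1$ and $v_2$ — indeed the proposition just above derives inequivalence from this very lemma.

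If both are nonarchimedean, say $v_i = v_{\mfk{p}_i}$ with $\mfk{p}_1 \neq \mfk{p}_2$, I will use maximality of the primes to pick $a \in \mfk{p}_1 \setminus \mfk{p}_2$ and $b \in \mfk{p}_2 \setminus \mfk{p}_1$, so that $\varphi_1(a) < 1 = \varphi_2(a)$ and $\varphi_2(b) < 1 = \varphi_1(b)$; then $x = b/a$ does the job. If one is archimedean (say $v_1$, coming from an embedding $\sigma_1$) and the other is $v_\mfk{p}$, I will pick a nonzero $y \in \mfk{p}$ and a positive integer $m$ with $m\,|\sigma_1(y)| > 1$, so that $x = my$ satisfies $\varphi_1(x) > 1$ while $\varphi_2(x) = \varphi_2(m)\,\varphi_2(y) \leq \varphi_2(y) < 1$, using that $\varphi_2(m) \leq 1$ for every $m \in \mathbb{Z}$ by \Cref{archchar}. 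The opposite mixed case is handled by inverting the $x$ so produced.

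The interesting case is both archimedean. Let $\theta$ be a primitive element of $k/\mathbb{Q}$ and set $\theta_i := \sigma_i(\theta)$. By the construction of $M_k$, the embeddings $\sigma_1, \sigma_2$ are not equal up to complex conjugation, so $\theta_2 \notin \{\theta_1, \overline{\theta_1}\}$. I will consider the function $\mathbb{R} \to \mathbb{R}$, $a \mapsto |\theta_1 - a|^2 - |\theta_2 - a|^2$, which expands to the affine polynomial $-2a(\operatorname{Re}\theta_1 - \operatorname{Re}\theta_2) + (|\theta_1|^2 - |\theta_2|^2)$; if it were identically zero, equating coefficients would force $\operatorname{Re}\theta_1 = \operatorname{Re}\theta_2$ and $|\theta_1| = |\theta_2|$, hence $\theta_2 \in \{\theta_1, \overline{\theta_1}\}$, a contradiction. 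So some $a \in \mathbb{Q}$ gives $|\theta_2 - a| < |\theta_1 - a|$ (swapping $v_1, v_2$ and inverting the final $x$ if necessary). By density of $\mathbb{Q}$ in $\mathbb{R}$, I will choose $c \in \mathbb{Q}^{>0}$ with $1/|\theta_1 - a| < c < 1/|\theta_2 - a|$; then $x = c(\theta - a) \in k$ has $|\sigma_1(x)| > 1$ and $|\sigma_2(x)| < 1$.

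The main obstacle is this last case: it rests on the small geometric observation that two distinct points in $\mathbb{C}$ can be equidistant from every point of $\mathbb{R}$ only if they are complex conjugates, which is precisely the configuration ruled out by the identification of $\sigma$ with $\overline{\sigma}$ in $M_k$.
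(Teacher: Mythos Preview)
Your proof is correct and follows the same three-case decomposition as the paper. The nonarchimedean and mixed cases match the paper's argument closely (the paper takes a rational prime in $\mfk{p}\cap\mathbb{Z}$ directly rather than scaling an element of $\mfk{p}$ by an integer, but the idea is the same). In the archimedean--archimedean case the paper simply asserts that some $x\in k$ has $\varphi_1(x)\neq\varphi_2(x)$ and then divides by a rational lying strictly between the two values; your primitive-element argument with the affine function $a\mapsto|\theta_1-a|^2-|\theta_2-a|^2$ actually supplies the missing justification for that assertion, so your version is the more complete of the two.
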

\begin{proof}
	Let $v_1= v_\mfk{p}$ and $v_2 = v_\mfk{q}$ where $\mfk{p}, \mfk{q}$ are two distinct prime ideals of $\mathcal{O}_k$. Consider an element $x \in \mfk{q} \backslash \mfk{p}$ and $y \in \mfk{p} \backslash \mfk{q}$. Then we have $\varphi_\mfk{p}(x) > 1$ and $\varphi_\mfk{q}(x) \leq 1$. If $\varphi_\mfk{q}(x) \neq 1$, then $x$ works. If $\varphi_\mfk{p}(y) \neq 1$, then we have that $y^{-1}$ works. If both $\varphi_\mfk{p}(x) = \varphi_\mfk{q}(y)=1$, then $xy^{-1}$ works.
    \\ \indent Let $v_1 = \sigma$ for some $\sigma \in Hom(k, \mathbb{C})$, and $v_2 = v_\mfk{p}$ then we consider $x \in \mfk{p} \cap \mathbb{Z}$. $\varphi_1(x) > 1$ and $\varphi_2(x) < 1$ as inverse of a rational integer is never integral. If $v_1$ is nonarchimean and $v_2$ is archimedean, then $x^{-1}$ will for for $x \in \mfk{p} \cap \mathbb{Z}$.
    \\ \indent Now for $v_1, v_2$ both archimedean, we know that it arises from different complex embeddings of $k$, we see that there exists $x \in k$ such that $\varphi_1(x) \neq \varphi_2(x)$. Without loss of generosity, suppose that there exists $r$ such that $\varphi_1(x) < r < \varphi_2(x)$, then we see that
    \[
    	\varphi_1(r/x) > 1 \hspace*{1em} \text{ and } \hspace*{1em} \varphi_2(r/x) > 1
    \]
\end{proof}
We have seen the equivalence 
\[
	\varphi(x) < 1 \Leftrightarrow x \in B_\varphi(0,1)
\]
so we have that for two distinct $v_1$ and $v_2$, the open ball centered at $0$ of radius $1$ is different for all $v$, i.e. $v_1$ and $v_2$ are inequivalent.
\begin{lem} Suppose $v_1$ and $v_2$ are two distinct elements in $M_k$. Suppose $x \in k$ satisfies $\varphi_1(x) >1$ and $\varphi_2(x) <1$, then the sequence
\[
	z_m = \frac{x^m}{1+x^m}
\]
converges to $1$ with respect to $v_1$ and converges to $0$ with respect to $v_2$.
\end{lem}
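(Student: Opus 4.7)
The plan is to handle the two convergence statements separately, using in each case the reverse triangle inequality $\bigl|\varphi(a)-\varphi(b)\bigr|\le\varphi(a-b)$ (valid for any valuation satisfying the triangle inequality, which we may assume after replacing $\varphi_i$ by an equivalent $\varphi_i^\alpha$). The key observation is that $\varphi_1(x)>1$ makes $\varphi_1(x^m)$ blow up while $\varphi_2(x)<1$ makes $\varphi_2(x^m)$ shrink to $0$, and we use this to control the denominators $1+x^m$ away from $0$.

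For the second assertion, I would compute directly
\[
\varphi_2(z_m) \;=\; \frac{\varphi_2(x)^m}{\varphi_2(1+x^m)}.
\]
Since $\varphi_2(x^m)=\varphi_2(x)^m\to 0$, for sufficiently large $m$ we have $\varphi_2(x^m)<1/2$, and the reverse triangle inequality applied to $\varphi_2(1)\le\varphi_2(1+x^m)+\varphi_2(x^m)$ gives $\varphi_2(1+x^m)\ge 1-\varphi_2(x^m)>1/2$. In particular the denominator is nonzero, so $z_m$ makes sense in $k$, and $\varphi_2(z_m)\le 2\varphi_2(x)^m\to 0$. Thus $z_m\to 0$ in $T_{v_2}$.

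For the first assertion, the cleanest move is to rewrite
\[
z_m-1 \;=\; \frac{x^m-(1+x^m)}{1+x^m} \;=\; \frac{-1}{1+x^m},
\]
so that
\[
\varphi_1(z_m-1) \;=\; \frac{1}{\varphi_1(1+x^m)}.
\]
Now $\varphi_1(x^m)=\varphi_1(x)^m\to\infty$ because $\varphi_1(x)>1$, and the reverse triangle inequality gives $\varphi_1(1+x^m)\ge \varphi_1(x^m)-\varphi_1(1)=\varphi_1(x)^m-1$, which in particular is positive (hence $1+x^m\neq 0$) for large $m$ and tends to $\infty$. Consequently $\varphi_1(z_m-1)\to 0$, i.e.\ $z_m\to 1$ in $T_{v_1}$.

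The only mild subtlety is that the valuations $\varphi_i$ coming from $M_k$ need not themselves satisfy the triangle inequality when they are complex archimedean (the natural representative is the square of the usual absolute value). This is handled by replacing each $\varphi_i$ by an equivalent $\varphi_i^\alpha$ with $\|\varphi_i^\alpha\|\le 2$, which is legitimate because convergence in $T_{v_i}$ depends only on the place, not the specific representative; after this reduction both estimates above become routine. No further obstacle arises.
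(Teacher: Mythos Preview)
Your proof is correct. The paper's argument is organized a little differently: rather than using explicit reverse-triangle estimates to bound the denominators, it invokes the standard limit laws for a topological field (if $x_n\to x$ and $y_n\to y$ with $y\neq 0$, then $x_n/y_n\to x/y$). For $v_2$ this gives $x^m/(1+x^m)\to 0/1=0$ directly; for $v_1$ the paper rewrites $z_m=1/\bigl(1/x^m+1\bigr)$ and uses $1/x^m\to 0$ to get $z_m\to 1/1=1$. Your alternative rewrite $z_m-1=-1/(1+x^m)$ together with the explicit bound $\varphi_1(1+x^m)\ge\varphi_1(x)^m-1$ is equally clean and has the advantage of not relying on the limit laws having been established beforehand; your remark about passing to an equivalent $\varphi_i^\alpha$ with $\|\varphi_i^\alpha\|\le 2$ is also a point the paper leaves implicit.
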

\begin{proof} By performing the same procedure, we get the \emph{usual} theorems of limit, i.e., given $x_n, y_n$ sequences in $k$, and a valuation $\varphi$ of $k$, suppose $x_n \rightarrow x$ and $y_n \rightarrow y$, then
\begin{enumerate}
	\item $\varphi(x_n+y_n) \rightarrow x+y$,
    \item $\varphi(x_n y_n) \rightarrow xy$,
    \item $\varphi(x_n/y_n) \rightarrow x/y$ for $y_n \neq 0$ for all $n$ and $y \neq 0$.
\end{enumerate}
In particular, $\varphi_2(x^m/(1+x^m)) = \varphi_2(x^m)/\varphi_2(1+x^m) \rightarrow 0/1 =0$. Also, we get $\varphi_1(x) > 1 \Rightarrow \varphi_1(1/x) <1 \Rightarrow$
\[
	\varphi_1 \left ( \dfrac{x^m}{1+x^m} \right ) = \varphi_1 \left ( \dfrac{1}{\dfrac{1}{x^m} +1 } \right ) = 1 / \varphi_1 \left ( \dfrac{1}{x^m} +1 \right ) \rightarrow 1/1 = 1 
\]
\end{proof}
\begin{thm}[Approximation theorem] For distinct $v_1, \ldots, v_s \in M_k$, $x_1, \ldots, x_s \in k$, not necessarily distinct, and $\varepsilon > 0$, there exists $x \in k$ such that
\[
	\varphi_i(x - x_i) < \varepsilon
\]
for all $i=1, \ldots, s$. 
\end{thm}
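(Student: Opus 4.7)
The plan is to reduce the problem to constructing, for each index $i$, an element $y_i \in k$ that is simultaneously very close to $1$ with respect to $v_i$ and very close to $0$ with respect to every other $v_j$ ($j \neq i$). Once such ``partition of unity'' elements exist, the element
\[
    x = \sum_{i=1}^{s} x_i y_i
\]
will satisfy $\varphi_i(x - x_i) \leq \sum_{j \neq i} \varphi_i(x_j)\,\varphi_i(y_j) + \varphi_i(x_i)\,\varphi_i(1 - y_i)$ (up to a bounded factor coming from the triangle inequality, and a further factor $2$ absorbed via \Cref{valalpha} if some $\varphi_i$ does not satisfy the ordinary triangle inequality), and both sums can be made smaller than $\varepsilon$ by choosing the $y_i$ sufficiently accurate.

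To construct the $y_i$, first I would use \Cref{><}: for any pair of distinct indices $i \neq j$ there exists $z_{ij} \in k$ with $\varphi_i(z_{ij}) > 1$ and $\varphi_j(z_{ij}) < 1$. Then the preceding lemma asserts that the sequence
\[
    w_{ij}^{(m)} = \frac{z_{ij}^m}{1 + z_{ij}^m}
\]
converges to $1$ in $T_{v_i}$ and to $0$ in $T_{v_j}$. Taking the product over $j$, set
\[
    y_i^{(m)} = \prod_{j \neq i} w_{ij}^{(m)}.
\]
Using the continuity of multiplication in each topology $T_{v_\ell}$ (ensured by the fact that each $k_{v_\ell}$ is a topological field), I conclude that $y_i^{(m)} \to 1$ in $T_{v_i}$ and $y_i^{(m)} \to 0$ in $T_{v_j}$ for every $j \neq i$, as $m \to \infty$.

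Finally, I would pick $m$ large enough so that for every $i$ simultaneously
\[
    \varphi_i\bigl(1 - y_i^{(m)}\bigr) < \eta \quad \text{and} \quad \varphi_i\bigl(y_j^{(m)}\bigr) < \eta \text{ for all } j \neq i,
\]
where $\eta > 0$ will be chosen depending on $\varepsilon$ and on $\max_{i,j} \varphi_i(x_j)$. Then setting $x = \sum_i x_i y_i^{(m)}$ and expanding $x - x_i = x_i(y_i^{(m)} - 1) + \sum_{j \neq i} x_j y_j^{(m)}$ gives the bound $\varphi_i(x - x_i) < \varepsilon$ after choosing $\eta$ small enough.

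The only subtle point is that some $\varphi_i$ may fail to satisfy the ordinary triangle inequality (only $\|\varphi_i\| \leq 2$ is guaranteed in the archimedean case until we pass to an equivalent valuation). This is handled by replacing each $\varphi_i$ by an equivalent valuation with $\|\cdot\| \leq 2$ via \Cref{valalpha}; since $T_{v_i}$ is unchanged, convergence of $y_i^{(m)}$ is unaffected, and the final $\varepsilon$-bounds are obtained simply by choosing $\eta$ small enough to absorb the resulting constants.
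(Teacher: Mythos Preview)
Your overall strategy---building ``partition of unity'' elements $y_i$ close to $1$ at $v_i$ and to $0$ elsewhere, then setting $x=\sum x_iy_i$---is exactly the paper's, and the final estimate is fine. The gap is in your construction of the $y_i$. You form $y_i^{(m)}=\prod_{j\neq i} w_{ij}^{(m)}$ with $w_{ij}^{(m)}=z_{ij}^m/(1+z_{ij}^m)$ and then invoke continuity of multiplication to conclude $y_i^{(m)}\to 0$ in $T_{v_j}$. But continuity of multiplication only lets you pass a limit through a product when each factor converges. For $k\neq i,j$ you know nothing about $\varphi_j(z_{ik})$: the lemma you cite controls $w_{ik}^{(m)}$ only at $v_i$ and $v_k$. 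If $\varphi_j(z_{ik})=1$, the sequence $w_{ik}^{(m)}$ need not converge in $T_{v_j}$ and need not even be bounded there, since $\varphi_j(1+z_{ik}^m)$ can be arbitrarily small. (Concretely: with $k=\mathbb{Q}$, $z_{ik}=2$, and $\varphi_j=|\cdot|_3$, one has $|1+2^m|_3=3^{-1-v_3(m)}$ for odd $m$, so $|w_{ik}^{(m)}|_3$ is unbounded.) Thus the step ``$y_i^{(m)}\to 0$ in $T_{v_j}$'' is not justified.

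The paper avoids this by first producing, via induction on $s$, a \emph{single} element $y_i\in k$ with $\varphi_i(y_i)>1$ and $\varphi_j(y_i)<1$ for \emph{all} $j\neq i$ simultaneously; only then does it pass to $y_i^m/(1+y_i^m)$, which now has the correct limiting behaviour at every place at once. Your product-over-pairs shortcut skips precisely this inductive step, and that is where the argument breaks. The easiest repair is to insert that induction (the case split on whether $\varphi_s(y)<1$, $=1$, or $>1$ is short); alternatively you could try to control the exponents in $\prod_j z_{ij}^{n_j}$ directly, but that amounts to redoing the same work.
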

\begin{proof} We first show that there exists $y_i$ such that $\varphi_i(y_i) > 1$ and $\varphi_j(y_i) < 1$ for all $j \neq i$. We have proved the case when $i=2$, so we may proceed by induction. Also, without loss of generosity, we may assume that $i=1$. So suppose by inductive hypothesis we have $y$ such that
\[
	\varphi_1(y) >1 \text{ and } \varphi_i(y) < 1
\]
for $i=1, \ldots, s-1$. If $\varphi_s(y) <1$, then we are done, so first we assume that $\varphi_s(y) = 1$. Let $z \in k$ be such that $\varphi_1(z) > 1$ and $\varphi_s(z) < 1$. Then we can let $m$ large enough so that $y^mz$ works, so we are left with the case where $\varphi_s(y)>1$. We then let
\[
	z_m = \frac{y^m}{1+y^m}
\]
which converges to $1$ with respect to both $v_1$ and $v_2$, and $z_m$ will converge to $0$ with respect to $v_2, \ldots, v_{s-1}$. Pick $z$ such that $\varphi_1(z) >1$ and $\varphi_s(z) <1$, then for sufficiently large $m$, we see that $z_mz$ works.
\\ \indent Now we are given $y_i$ such that 
\[
	\varphi_i(y_i) > 1 \hspace*{2em} \text{ and } \hspace*{2em} \varphi_j(y_i) < 1 \text{ for } j \neq i
\]
Then letting $z_i = y_l/(1+y_l)$ for some appropriate $l$, we see that $z_i$ is arbitrary closed $1$ with respect with $v_i$ and arbitrary close to $0$ with respect with $v_j$ with $j \neq i$. By letting $x=z_1x_1 + \cdots z_sx_s$ with the inequality
\[
	\varphi_i(x - x_i) \leq \varphi_i(z_1)\varphi_i(x_1) + \cdots + \varphi_i(z_i -1) \varphi_i(x_i) + \cdots + \varphi_i(z_s) \varphi_i(x_s)
\]
can be arbitrary small.
\end{proof}

\part{Adeles and Ideles}
\section{Restricted product}
\indent Let $G$ be a topological group, with $*:G \times G \rightarrow G$ denote its continuous multiplication. For any $g \in G$, we define
\[
	i_g: G \rightarrow G \times G \text{ defined by } h \mapsto (g^{-1}, h)
\]
For any open set $U \times V \subset G \times G$, with $U, V$ open in $G$, we have $i_g^{-1}(U \times V) = V$ if $g^{-1} \in U$ and $\varnothing$ otherwise. This shows that $i_g$ is continuous. The composition
\[
	G \xlongrightarrow{~i_g~} G \times G \xlongrightarrow{~*~} G
\]
which sends $h \in G$ to $g^{-1}h$, we see that the preimage of a subset $H$ of $G$ is simply $gH$. Hence for any open set $H \subset G$, we get $gH$ is also open by continuity. 
\\ \indent Let $G_v$ be a locally compact topological groups with $K_v \subset G_v$ be compact subsets which is defined for almost all $v$. We use $M_G$ be the set of indices, and $S_\infty \subset M_G$ be the subset which consists of all indices where $K_v$ is not defined. A \red{restricted product} $\mathbb{I}_G$ of $G_v$ with respect to $K_v$ is a subset of the direct product
\[
	\prod G_v
\]
consisting of $(x_v) \in \prod G_v$ where $x_v \in K_v$ for almost all $v$, i.e. for all but finitely many $v$. We do not give $\mathbb{I}_G$ the subspace topology, but we consider the subset 
\[
	\prod_{v \in S} U_v \times \prod_{v \notin S} K_v
\]
where $U_v \subset G_v$ open sets and $S \subset M_G$ be a finite subset containing $S_\infty$ to be the basis of the topology for $J$. The simplest set of this form is
\[
	\mathbb{I}_S = \underbrace{\prod_{v \in S} G_v}_\text{(1)} \times \underbrace{\prod_{v \notin S} K_v}_\text{(2)}
\]
(1) is locally compact as $S$ is finite and $(2)$ is compact by Tychonoff, so the total product is locally compact. (The space itself is the compact neighborhood of every point in the space.) We easily see that
\[
	\mathbb{I}_G = \bigcup_{\substack{ S_\infty \subset S \\ S \text{ finite }}} \mathbb{I}_S
\]
hence for all $x \in \mathbb{I}_G$, there exists a compact neighborhood $K_i \subset \mathbb{I}_S$ of $x$ for some $\mathbb{I}_S$ containing $x$, as $\mathbb{I}_S$ is locally compact. $\mathbb{I}_G$ is itself locally compact. Because we haven't used any properties of $G_v$ being topological groups, we may generalize the construction to simply collection of $X_v$ locally compact topological spaces with $K_v$ compact subsets with a locally compact restricted product $\mathbb{I}_X$.
\begin{prp} Let $G_v$ be a locally compact set with $K_v$ be a compact subset of $G_v$ defined for almost of $v \in M_G$ with $M_G$ for some indexing set, then the restricted product of $G_v$ with respect to $K_v$ is a topological group. Furthermore, if $G_v$ are topological rings (resp. topological fields), then the restricted product is also a topological ring.
\end{prp}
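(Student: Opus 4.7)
The plan is to verify the continuity of the group operations (and then of addition, negation, and multiplication in the ring case). The key structural observation is that
\[
    \mathbb{I}_G = \bigcup_{\substack{S_\infty \subset S \\ S \text{ finite}}} \mathbb{I}_S,
\]
and that each $\mathbb{I}_S = \prod_{v \in S} G_v \times \prod_{v \notin S} K_v$ is \emph{open} in $\mathbb{I}_G$: the defining basis of $\mathbb{I}_G$ at level $S$ is precisely the product-topology basis of $\mathbb{I}_S$. For the restricted product to be closed under the group operations at all, one must assume (as is implicit in the construction) that each distinguished $K_v$ is a compact subgroup of $G_v$; for the ring case one assumes additionally that $K_v$ is a subring, and for the field case that each $K_v^\times$ is a compact subgroup of $G_v^\times$.

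First I would check that the subspace topology $\mathbb{I}_G$ induces on $\mathbb{I}_S$ coincides with the product topology on $\prod_{v \in S} G_v \times \prod_{v \notin S} K_v$: both have the same basic open sets $\prod_{v \in S} U_v \times \prod_{v \notin S} K_v$. Since each $G_v$ with $v \in S$ is a topological group by hypothesis and each $K_v$ with $v \notin S$ is a topological group as a compact subgroup of one, the finite-times-Tychonoff product $\mathbb{I}_S$ is itself a topological group.

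To check continuity of multiplication $m : \mathbb{I}_G \times \mathbb{I}_G \to \mathbb{I}_G$ at an arbitrary point $(x,y)$, I would take a basic open neighborhood $W = \prod_{v \in S} W_v \times \prod_{v \notin S} K_v$ of $xy$ and enlarge $S$ to include every index at which $x_v \notin K_v$ or $y_v \notin K_v$; call the enlarged finite set $S'$. Then $x, y \in \mathbb{I}_{S'}$ and $W \cap \mathbb{I}_{S'}$ is open in $\mathbb{I}_{S'}$. Because $\mathbb{I}_{S'}$ is a topological group, there exist open neighborhoods $U \ni x$ and $V \ni y$ inside $\mathbb{I}_{S'}$ with $UV \subset W \cap \mathbb{I}_{S'} \subset W$. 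Openness of $\mathbb{I}_{S'}$ in $\mathbb{I}_G$ promotes $U$ and $V$ to open sets in $\mathbb{I}_G$, yielding continuity of $m$ at $(x,y)$. Inversion is handled the same way: enlarge $S$ so that $x_v$ and hence $x_v^{-1}$ lie in $K_v$ outside $S'$ (using that $K_v$ is a subgroup), then invoke continuity of inversion inside $\mathbb{I}_{S'}$. The ring statement follows by running this argument for $+, -, \cdot$, and the field statement by restricting the inversion step to the group of units.

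The one real pitfall is that the topology on $\mathbb{I}_G$ is strictly finer than the subspace topology from $\prod_v G_v$, so one cannot simply quote continuity in the full product. The whole argument goes through precisely because each $\mathbb{I}_{S'}$ is open in $\mathbb{I}_G$ and inherits the expected product topology, reducing global continuity on $\mathbb{I}_G$ to continuity in a single well-behaved topological group at a time; keeping this distinction straight is the main conceptual obstacle.
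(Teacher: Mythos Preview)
Your argument is correct and is in fact more carefully laid out than the paper's. The two approaches are genuinely different in structure.

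The paper argues componentwise: for a basic open set $\prod_{v\in S} U_v \times \prod_{v\notin S} K_v$ in the target, it computes the preimage under the operation $f=\prod_v f_v$ coordinate by coordinate, obtaining $\prod_{v\in S} f_v^{-1}(U_v)\times\prod_{v\notin S} f_v^{-1}(K_v)$; the finitely many factors $f_v^{-1}(U_v)$ are open in $G_v\times G_v$ by continuity of $f_v$, and at the remaining indices one uses $K_v\times K_v\subset f_v^{-1}(K_v)$ (i.e.\ that $K_v$ is closed under the operation) to conclude the preimage contains a basic open set of $\mathbb{I}_G\times\mathbb{I}_G$. This is a one-shot global computation.

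You instead localize: observe that each $\mathbb{I}_S$ is open in $\mathbb{I}_G$ and carries the ordinary product topology, hence is already a topological group; then for any $(x,y)$ you enlarge $S$ to some $S'$ with $x,y\in\mathbb{I}_{S'}$ and invoke continuity inside $\mathbb{I}_{S'}$. Your route has the advantage of making explicit the hypothesis that each $K_v$ must be a compact \emph{subgroup} (resp.\ subring), a point the paper's statement omits and its proof uses without comment. It also sidesteps the awkwardness in the paper's write-up, where the displayed ``open set'' is written as an element of the domain and then has its ``preimage'' taken. The cost of your approach is a mild extra layer of bookkeeping (checking that the subspace topology on $\mathbb{I}_S$ agrees with the product topology, which you do), whereas the paper's componentwise argument, once untangled, is a single direct verification.
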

\begin{proof}
	Let $f$ be an operation whether it be $+$ or $*$, then consider an open set
    \[
    	\left ( \prod U_v \times \prod \mathcal{O}_v \right ) \times \left ( \prod U_v' \times \prod \mathcal{O}_v \right ) = \prod (U_v \times U_v') \times \prod (\mathcal{O}_v \times \mathcal{O}_v)
    \]
    then its preimage will have the form
    \[
    	\prod f_v^{-1}(U_v \times U_v') \times \prod f_v^{-1}(\mathcal{O}_v \times \mathcal{O}_v)
    \]
    where $f = \prod f_v$ with $f_v: G_v \rightarrow G_v \times G_v$, the operation function for each component. We know that the preimages are open as $k_v$ are all topological rings. The same reasoning works for negation.
\end{proof}
We have shown that for all $v \in M_k$ where $k$ is a number field, $k_v$ is locally compact. For all nonarchimdean prime divisors $v$, we have a compact subset $\mathcal{O}_v$ of $k_v$. The restricted product of $k_v$ with respect to $\mathcal{O}_v$ will be called the \red{adeles} of $k$, denoted $\mathbb{A}_k$. If we have compact subsets $\mathcal{O}_v^*$ with $k_v^*$, then the restricted product will be called the \red{ideles} of $k$ and will be denoted $\mathbb{I}_k$.
\section{Finiteness and Unit Theorem}
Let $\varphi: \mathbb{I}_k \rightarrow \mathbb{R}^{>0}$ be a map defined by
\[	
	x \mapsto \prod_v \varphi_v(x_v)
\]
then the map is clearly an epimorphism. Because for any element, we can adjust one of the archimedean valuations. The next step is to show that the map is continuous. We first prove a lemma that the product of positive reals slightly changes if each term changes slightly.
Let $(b_1,b_2)$ be an interval in $\mathbb{R}$. For any $x \in \mathbb{I}_k$ such that $\varphi(x) \in (a,b)$ and let $S$ be the finite subset of $M_k$ containing all valuations such that $\varphi_v(x_v) \neq 1$
then we have
\[
	\prod_{v \in M_k} \varphi_v(x_v) = \prod_{v \in S} \varphi_v(x_v)
\]
then by \Cref{approxprod}, there exists $\varepsilon \in \mathbb{R}$ such that
\[
	b_1 < \prod_{v \in S} (\varphi_v(x_v) - \varepsilon) \text{ and } \prod_{v \in S} (\varphi_v(x_v) + \varepsilon) < b_2
\]
Now consider the open set
\[
	U = \prod_{v \in S} B(x_v, \varepsilon) \times \prod_{v \notin S} \mathcal{O}_k^*
\]
Clearly $x \in U$, and for any $y \in U$,
\[
	\varphi(y) = \prod_{v \in S} \varphi_v(x_v)  = \prod_{v \in S} \varphi_v(y_v - x_v + x_v) \leq \prod_{v \in S} \varphi_v(x_v) + \varphi_v(x_v - y_v) \leq \prod_{v \in S} \varphi_v(x_v) + \varepsilon < b_2
\]
The reverse triangle inequality implies that
\begin{multline*}
	| \varphi_v(x_v) - \varphi_v(y_v) | \leq \varphi_v(x_v - y_v) \Rightarrow  \varphi_v(x_v) - \varphi_v(y_v) \leq \varphi_v(x_v - y_v) \\ \Rightarrow \varphi_v(x_v) \leq \varphi_v(x_v - y_v) + \varphi_v(y_v) \Rightarrow \varphi_v(x_v) - \varphi_v(x_v - y_v) \leq \varphi_v(y_v)
\end{multline*}
\[
	\varphi(y) = \prod_{v \in S} \varphi_v(y_v) \geq \prod_{v \in S} \varphi_v(x_v) - \varphi_v(x_v - y_v) \geq \prod_{v \in S} \varphi_v(x_v) - \varepsilon > b_1  
\]
which shows that $U$ is a subset of the preimage of $(a,b)$. We, thus, have shown that the preimage is open and that the map is continuous. Clearly, $\{1\} \subset \mathbb{R}^{>0}$ is closed, the $ker(\varphi)$ is closed and will be denoted $\mathbb{I}_k^0$.
\\ \indent The second map to consider is a map from the ideles to the fractional ideals of $k^*$ denoted, $J_k$,
\[
	\psi: \mathbb{I}_k \rightarrow J_k
\]
then the map is defined by
\[
	x \mapsto \prod_{v_\mfk{p} \in M_k \backslash S_\infty } \mfk{p}^{v_\mfk{p}(x_{v_\mfk{p}})}
\]
For any fractional ideal
\[
	\prod_{\mfk{p}} \mfk{p}^{e_\mfk{p}}
\]
we have an element $x=(x_v)$ in the preimage by setting $x_v = e_{\mfk{p}}$ if $v=v_{\mfk{p}}$ and $x_v = 1$ otherwise. This show that the map is surjective. The kernel of this map consists of elements such that $\varphi_v(x_v)= 1$ for all $v \in S_\infty$. For $x \in k^*$, we have that $\psi(x) = (x)$ where the latter is the principal ideal generated by $x$. If we denote $P_k$ to be the principal ideals of $k*$, then we get an induced surjective map
\[
	\mathbb{I}_k/k^* \rightarrow J_k/P_k
\]
with $\mathbb{I} \rightarrow J_k/P_k$. The kernel of the $\psi$ is clearly $\mathbb{I}_{S_\infty}$.
\begin{clm} 
We get an isomorphism
\[
	\mathbb{I}_k/k^*\mathbb{I}_{S_\infty} \cong J_k/P_k = Cl_k
\]
where $Cl_k$ is the ideal class group.
\end{clm}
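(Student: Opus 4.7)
The plan is to apply the first isomorphism theorem to the composition
\[
\mathbb{I}_k \xlongrightarrow{~\psi~} J_k \twoheadrightarrow J_k/P_k,
\]
after verifying the two facts already hinted at just before the claim: that $\psi$ is a surjective group homomorphism with kernel exactly $\mathbb{I}_{S_\infty}$, and that $\psi$ sends the diagonally embedded $k^*$ onto $P_k$. Surjectivity and the computation of $\ker\psi$ have already been observed; the image of $k^* \hookrightarrow \mathbb{I}_k$ under $\psi$ is computed by noting that for $a \in k^*$ we have $v_\mfk{p}(a) = e_\mfk{p}$ in the factorization $(a) = \prod \mfk{p}^{e_\mfk{p}}$, so $\psi(a) = (a)$, and conversely every principal ideal $(a)$ is obviously $\psi(a)$.

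The main step will be identifying the kernel of the composite map $\pi \circ \psi : \mathbb{I}_k \to J_k/P_k$ as $k^*\mathbb{I}_{S_\infty}$. The inclusion $k^*\mathbb{I}_{S_\infty} \subset \ker(\pi \circ \psi)$ is immediate from $\psi(k^*) = P_k$ and $\psi(\mathbb{I}_{S_\infty}) = \{1\}$. For the reverse inclusion, suppose $\psi(x) \in P_k$, so $\psi(x) = (a)$ for some $a \in k^*$. Then $\psi(xa^{-1}) = (a)(a)^{-1} = \mathcal{O}_k$, i.e.\ $xa^{-1} \in \ker \psi = \mathbb{I}_{S_\infty}$, so $x \in a \cdot \mathbb{I}_{S_\infty} \subset k^*\mathbb{I}_{S_\infty}$.

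Once the kernel is identified, the first isomorphism theorem gives the group isomorphism
\[
\mathbb{I}_k / k^*\mathbb{I}_{S_\infty} \xlongrightarrow{~\sim~} J_k/P_k = Cl_k,
\]
sending the coset of $x = (x_v)$ to the ideal class of $\prod_{\mfk{p}} \mfk{p}^{v_\mfk{p}(x_{v_\mfk{p}})}$. Since the statement of the claim is purely at the level of abelian groups, no topology needs to be checked. The only place where care is required is the small computation showing $\psi^{-1}(P_k) = k^*\mathbb{I}_{S_\infty}$; everything else is a transparent application of the universal property of the quotient, so I do not expect a substantive obstacle.
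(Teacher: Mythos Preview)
Your proposal is correct and follows essentially the same route as the paper: both compute the kernel of the composite map $\mathbb{I}_k \to J_k \to J_k/P_k$ by showing the two inclusions $k^*\mathbb{I}_{S_\infty} \subset \ker(\pi\circ\psi)$ and $\ker(\pi\circ\psi) \subset k^*\mathbb{I}_{S_\infty}$ via the identical argument (if $\psi(x)=(\alpha)$ then $x\alpha^{-1}\in\ker\psi=\mathbb{I}_{S_\infty}$), and then invoke the first isomorphism theorem. Your write-up is in fact cleaner and more explicit about the role of the first isomorphism theorem than the paper's.
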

\begin{proof}
	Let $\alpha x$ be such that $\alpha \in k^*$ and $x \in \mathbb{I}_{S_\infty}$, then consider the ideal $(\alpha x)$. Because $(\alpha x) = (\alpha) (x)$, in the image, it is equivalent to $(x)$. $x \in \mathbb{I}_{S_\infty}$ implies that $(x)$ is, in fact, $\mathcal{O}_k$. Now conversely, suppose $ x \in \mathbb{I}_k$ be such that $(x) \in P_k$, then there exists $\alpha \in k^*$ such that $(x) = (\alpha)$. We have that $(x \alpha^{-1}) = \mathcal{O}_k$. We know that $x \alpha^{-1} \in \mathbb{I}_{S_\infty}$, hence $x \alpha^{-1} = y$ for some $y \in \mathbb{I}_{S_\infty}$. We conclude that $x = \alpha y$, hence the kernel is $k^* \mathbb{I}_{S_\infty}$. 
\end{proof}
Similar to the ideal class group, we have  $C_k = \mathbb{I}_k/k^*$ and call it the \red{idele class group}. The relationship between the two class groups can be drawn from the epimorphism
\[
	C_k \rightarrow Cl_k
\]
For any finite set $S$ containing $S_\infty$, we denote $k_S = \mathbb{I}_S \cap k^*$ called the \red{$S$-units} of $k$. $C_k$ is generalized to
\[
	C_S = \mathbb{I}_S/k_S
\]
We will call $C_S$ the \red{$S$-idele class group}.
\begin{clm} The kernel of $C_k \rightarrow Cl_k$ is $C_{S_\infty}$. \end{clm}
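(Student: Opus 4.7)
The plan is to reduce this claim to the previous one (which identified $\mathbb{I}_k/k^*\mathbb{I}_{S_\infty} \cong Cl_k$) and then apply the second isomorphism theorem. First I would unwind the definitions: $C_k = \mathbb{I}_k/k^*$, the map $C_k \to Cl_k$ is induced by the surjection $\psi: \mathbb{I}_k \to J_k$ followed by the quotient $J_k \to Cl_k = J_k/P_k$, and $C_{S_\infty}=\mathbb{I}_{S_\infty}/k_{S_\infty}$ where $k_{S_\infty} = \mathbb{I}_{S_\infty}\cap k^*$.

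Next, by the previous claim the kernel of the composition $\mathbb{I}_k \to J_k \to Cl_k$ is precisely $k^*\mathbb{I}_{S_\infty}$. Passing this surjection through the quotient by $k^*$ (which is allowed since $k^*$ already lies in the kernel), I obtain that the kernel of $C_k \to Cl_k$ equals
\[
    \frac{k^*\mathbb{I}_{S_\infty}}{k^*}.
\]
It then remains to identify this subgroup of $C_k$ with $C_{S_\infty}$.

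For this I apply the second isomorphism theorem to the subgroups $k^*$ and $\mathbb{I}_{S_\infty}$ of the abelian group $\mathbb{I}_k$, obtaining a canonical isomorphism
\[
    \frac{k^*\mathbb{I}_{S_\infty}}{k^*} \;\cong\; \frac{\mathbb{I}_{S_\infty}}{k^*\cap \mathbb{I}_{S_\infty}}.
\]
Since $k^*\cap \mathbb{I}_{S_\infty}= k_{S_\infty}$ by the definition of the $S$-units, the right-hand side is exactly $C_{S_\infty}$. Chaining the two displayed isomorphisms identifies $\ker(C_k\to Cl_k)$ with $C_{S_\infty}$, as desired.

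The argument is essentially formal once the previous claim is in hand; there is no real obstacle. The only point that deserves a moment of care is verifying that $k^*\cap\mathbb{I}_{S_\infty}=k_{S_\infty}$, which holds immediately from the definition $k_{S_\infty}=\mathbb{I}_{S_\infty}\cap k^*$ given just before the claim. Everything else reduces to a diagram-chase with the short exact sequence $1\to k^*\to\mathbb{I}_k\to C_k\to 1$ and the previous identification of the kernel of $\mathbb{I}_k\to Cl_k$.
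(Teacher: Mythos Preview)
Your proof is correct and follows essentially the same approach as the paper's: both identify the kernel as the image $k^*\mathbb{I}_{S_\infty}/k^*$ in $C_k$ and then rewrite this as $\mathbb{I}_{S_\infty}/k_{S_\infty}=C_{S_\infty}$. The only difference is cosmetic---you invoke the second isomorphism theorem explicitly where the paper just says ``$k^*$ cancels.''
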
 
\begin{proof} The kernel is the image of $k^* \mathbb{I}_{S_\infty}$ under $\mathbb{I}_k/k^*$, then $k^*$ cancels, and we are left with $\mathbb{I}_{S_\infty}$. This shows that the kernel is $\mathbb{I}_{S_\infty}/k_{S_\infty}$ which is $C_{S_\infty}$.
\end{proof}
We restrict the domain of the function by considering $\mathbb{I}_k^0$. We denote
\[
	\mathbb{I}^0_S = \mathbb{I}_S \cap \mathbb{I}_K^0
\]
\[
	C_S^0 = \mathbb{I}^0_S/k_S
\]
\begin{clm} The map is still surjective when we shrink the domain from $C_k$ to $C_k^0$ with its kernel $C^0_{S_\infty}$
\end{clm}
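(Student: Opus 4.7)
The plan is to obtain the result by first upgrading the surjectivity argument from the previous claim and then computing the kernel via the product formula.

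For surjectivity, I start from the fact (already established) that $\mathbb{I}_k \to Cl_k$ is surjective. Given any ideal class, pick any idele $x \in \mathbb{I}_k$ in its preimage. Since the map $\psi$ only reads off nonarchimedean valuations, I am free to modify the archimedean components of $x$ without changing its image in $Cl_k$. Pick any archimedean place $v_0 \in S_\infty$ (which exists because $k$ is a number field); since the valuation $\varphi_{v_0}: k_{v_0}^* \to \mathbb{R}^{>0}$ is surjective (it is the usual absolute value on $\mathbb{R}$ or $\mathbb{C}$), I can choose $\alpha \in k_{v_0}^*$ with $\varphi_{v_0}(\alpha)^{n_{v_0}} = 1/\varphi(x)$. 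Replacing $x_{v_0}$ by $\alpha x_{v_0}$ produces an idele $y$ with $\varphi(y) = 1$, i.e. $y \in \mathbb{I}_k^0$, which still maps to the chosen ideal class. Passing to $C_k^0 = \mathbb{I}_k^0/k^*$ gives surjectivity of $C_k^0 \to Cl_k$.

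For the kernel, the previous claim identified the kernel of $C_k \to Cl_k$ as $k^*\mathbb{I}_{S_\infty}/k^*$, so the kernel of the restricted map is $(k^*\mathbb{I}_{S_\infty} \cap \mathbb{I}_k^0)/k^*$. The key step is the identity
\[
	k^*\mathbb{I}_{S_\infty} \cap \mathbb{I}_k^0 = k^* \mathbb{I}_{S_\infty}^0,
\]
where $\mathbb{I}_{S_\infty}^0 := \mathbb{I}_{S_\infty} \cap \mathbb{I}_k^0$. The inclusion $\supset$ uses the product formula $\varphi(\alpha) = 1$ for $\alpha \in k^*$. Conversely, if $\alpha x \in \mathbb{I}_k^0$ with $\alpha \in k^*$ and $x \in \mathbb{I}_{S_\infty}$, then $\varphi(x) = \varphi(\alpha x)/\varphi(\alpha) = 1$, so $x \in \mathbb{I}_{S_\infty}^0$ and $\alpha x \in k^*\mathbb{I}_{S_\infty}^0$.

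It then remains to identify $k^*\mathbb{I}_{S_\infty}^0/k^*$ with $C_{S_\infty}^0 = \mathbb{I}_{S_\infty}^0/k_{S_\infty}$ via the second isomorphism theorem. For this I need $\mathbb{I}_{S_\infty}^0 \cap k^* = k_{S_\infty}$, which is immediate: $\mathbb{I}_{S_\infty}^0 \cap k^* = (\mathbb{I}_{S_\infty} \cap k^*) \cap \mathbb{I}_k^0 = k_{S_\infty} \cap \mathbb{I}_k^0$, and by the product formula $k^* \subset \mathbb{I}_k^0$, so this equals $k_{S_\infty}$. The main (minor) obstacle is simply bookkeeping the intersections; the essential input, and the only place substantive mathematics enters, is the product formula, which collapses $k^*$ inside $\mathbb{I}_k^0$ and makes the restriction to $\mathbb{I}_k^0$ behave cleanly.
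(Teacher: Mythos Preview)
Your proof is correct and follows the same approach as the paper: modify an archimedean component to land in $\mathbb{I}_k^0$ for surjectivity, and identify the kernel with $C_{S_\infty}^0$. Your kernel computation is considerably more careful than the paper's, which dispatches it with a single ``clearly''; your explicit use of the product formula and the second isomorphism theorem fills in exactly the details the paper omits.
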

\begin{proof} The first goal is to prove that $C_k^0 \rightarrow Cl_k$ is surjective. Let $\mfk{A}$ be a fractional ideal of $k$, then by the surjectivity of the original map, there exists $x \in \mathbb{I}_k$ such that $(x) = \mfk{A}$. Consider $\phi(x)$, then we can change $x$ by one of its archimedean properties and get $x'$ so that $\phi(x') =1$. $C_k = \mathbb{I}_k/k^*$ implies that $x' \in C_k^0$.
\\ \indent As we can view $C_k^0 \subset C_k$, the kernel is clearly $\mathbb{I}_{S_\infty}^0/k_{S_\infty}$ which is $C_{S_\infty}^0$.
\end{proof}
The claims above combine to establish the isomorphism
\[
	C^0_k/\mathcal{C}^0_{S_\infty} \cong C_k
\]
Before we prove the topological property of $C^0_k$, consider the following notation, for any $x \in \mathbb{I}_k$,
\[
	L(x):= \{ \alpha \in k^* ~|~ |\alpha|_v \leq |x_v|_v \text{ for all } v \in M_k \}
\]
Let $\lambda(x) = |L(x)|$, the order, then what we want to show is that if $|x|$ is sufficiently large, then $\lambda(x) > 0$. Observe that $L(x)$ has a canonical bijection with $L(\alpha x)$ for any $\alpha \in k^*$ by
\[
	y \mapsto \alpha y
\]
\begin{prp}[Stronger Approximation Theorem] Let $k$ be a number field, then there exists $c_k$ which is depended only on $k$ such that for all $x \in \mathbb{I}_k$,
\[
	\lambda(x) \geq c_k\varphi(x)
\]
\end{prp}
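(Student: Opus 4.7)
The plan is to geometrize $L(x)$ as lattice points of a fractional ideal lying inside a compact archimedean box, and then apply a Minkowski or Blichfeldt style lattice-point count.

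First, both sides are $k^*$-invariant: for $\alpha \in k^*$ the map $y \mapsto \alpha y$ bijects $L(x)$ with $L(\alpha x)$, so $\lambda(\alpha x) = \lambda(x)$, and by the product formula $\varphi(\alpha x) = \varphi(\alpha)\varphi(x) = \varphi(x)$. So we may replace $x$ by any $k^*$-translate. Next I would decode $L(x)$: at a finite place $v = v_\mfk{p}$ the condition $\varphi_v(\alpha) \leq \varphi_v(x_v)$ is equivalent to $v_\mfk{p}(\alpha) \geq v_\mfk{p}(x_{v_\mfk{p}})$, so the aggregate nonarchimedean conditions say exactly that $\alpha$ lies in the fractional ideal
\[ \mfk{a}(x) := \prod_{\mfk{p}} \mfk{p}^{v_\mfk{p}(x_{v_\mfk{p}})}. \]
Under the diagonal embedding $k \hookrightarrow V_\infty := \prod_{v \in S_\infty} k_v \cong \mathbb{R}^{r+2s}$, the ideal $\mfk{a}(x)$ becomes a full-rank lattice $\Lambda$ of covolume $N(\mfk{a}(x))\, D_k$, with $D_k > 0$ depending only on $k$ (essentially $\sqrt{|d_k|}/2^s$). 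The archimedean conditions cut out a symmetric compact box $B(x) \subset V_\infty$ of Lebesgue volume $E_k \prod_{v \in S_\infty} \varphi_v(x_v)^{n_v}$, where $E_k$ is a product of $2$'s and $\pi$'s depending only on $r,s$. Hence $L(x) = (\Lambda \cap B(x)) \setminus \{0\}$.

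A short calculation using $\varphi_v(x_v) = N(\mfk{p})^{-v_\mfk{p}(x_v)}$ at finite places gives $\prod_{v \notin S_\infty} \varphi_v(x_v) = N(\mfk{a}(x))^{-1}$, which together with the formula above for $\mathrm{vol}(B(x))$ yields the key identity
\[ \frac{\mathrm{vol}(B(x))}{\mathrm{covol}(\Lambda)} = \frac{E_k}{D_k}\,\varphi(x). \]
The desired bound is thus reduced to a uniform lattice-point estimate of the form $|\Lambda \cap B(x)| \geq c\, \mathrm{vol}(B(x))/\mathrm{covol}(\Lambda)$. I would obtain this via Blichfeldt's theorem applied to $T := \tfrac{1}{2}B(x)$: whenever $\mathrm{vol}(T) > m \cdot \mathrm{covol}(\Lambda)$, the set $T$ contains $m+1$ points lying in a common coset of $\Lambda$, and their pairwise differences with a fixed basepoint give $m$ distinct nonzero elements of $\Lambda \cap (T-T) \subset \Lambda \cap B(x)$. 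Combining, one obtains $\lambda(x) \geq (E_k/(2^n D_k))\,\varphi(x) - 1$, and choosing $c_k$ slightly below $E_k/(2^n D_k)$ absorbs the additive loss in the regime $\varphi(x) \gg 1$, which matches the author's own remark that the interesting range is $\varphi(x)$ large; in the complementary range the inequality is trivial once $c_k$ is small enough.

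The main obstacle is the lattice-point count together with the bookkeeping that produces the constant. Blichfeldt supplies the uniform estimate in principle, but one must carefully identify $\Lambda$ with the correct fractional ideal, compute its covolume in terms of $N(\mfk{a}(x))$ and the absolute discriminant, and match the archimedean box volume to $\varphi(x)$. These steps are where the Minkowski geometry of $\mathcal{O}_k$ actually enters and where $c_k$ is produced; everything else is formal manipulation using the definitions and the product formula.
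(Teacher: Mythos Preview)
Your approach is correct and genuinely different from the paper's. The paper first normalizes by $k^*$: it replaces $x$ by $z=m\alpha x$ so that $\varphi_v(z_v)\le 1$ at all finite places (hence $(z)\subset\mathcal{O}_k$) and $mc<\varphi_v(z_v)<2mc$ at the infinite places, then runs an elementary pigeonhole in the finite quotient $\mathcal{O}_k/(z)$ on the explicit box $\{\sum a_iw_i:0\le a_i\le m\}$ built from an integral basis. Your route skips the normalization entirely: you read off the fractional ideal $\mfk{a}(x)$ from the finite data, embed it as a lattice in $\prod_{v\in S_\infty}k_v$, and invoke Blichfeldt on $\tfrac12 B(x)$. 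The paper's argument is more self-contained (no covolume formula, no geometry of numbers), while yours is cleaner and makes the dependence of $c_k$ on the discriminant transparent; both are essentially the same counting principle, executed in dual ways.

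One genuine slip: your last sentence, ``in the complementary range the inequality is trivial once $c_k$ is small enough,'' is false. Take $k=\mathbb{Q}$, $x_p=1$ for all finite $p$, $x_\infty=\tfrac12$; then $L(x)=\varnothing$, $\lambda(x)=0$, $\varphi(x)=\tfrac12$, and no positive $c_k$ works. What both your argument and the paper's actually prove is $\lambda(x)\ge c_k\varphi(x)-1$ (equivalently, $\lambda(x)\ge c_k\varphi(x)$ once $\varphi(x)$ is large), which is exactly what the paper uses downstream (the next lemma assumes $\varphi(x)\ge 2/c_k$). So your Blichfeldt bound $\lambda(x)\ge (E_k/2^nD_k)\varphi(x)-1$ is the honest conclusion; just don't claim the additive loss disappears for small $\varphi(x)$.
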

\begin{proof}
	Let $n = [k:\mathbb{Q}]$ and $w_1, \ldots, w_n$ be the integral basis of $\mathcal{O}_k$, then define
    \[
    	c:= \sup_{v \in S_\infty, i} \left ( \varphi_v(w_i) \right )
    \]
Then the above immediately implies that
\[
	\varphi_v(w_1) + \cdots + \varphi_v(w_n) \leq c
\]
Let $a_v = c/\varphi_v(x_v)$, then $a_v < 2a_v$, so there exists $r_v \in \mathbb{Q}$ and $\varepsilon \in \mathbb{R}^{>0}$ such that $(r-\varepsilon, r + \varepsilon) \subset (a_v, 2a_v)$, then by the aaproximation theorem, there exists $\alpha \in k^*$ such that
\[
	\varphi_v(\alpha - r_v) < \varepsilon
\]
for all $v \in S_\infty$, then 
\[
	|\varphi_v(\alpha) - \varphi_v(r_v)| \leq \varphi_v(\alpha - r_v) < \varepsilon
\]
we get
\[
	\varphi_v(r_v) - \varepsilon < \varphi_v(\alpha) < \varphi_v(r_v) + \varepsilon 
\]
which implies
\[
	\dfrac{c}{\varphi_v(x_v)} < \varphi_v(\alpha) < \dfrac{2c}{\varphi_v(x_v)}
\]
Now for $\alpha x$, consider all finite $v$ such that $\varphi_v(\alpha x_v) > 1$. By letting $m$ be divisible \emph{highly} by all $v$, we see that $\varphi_v(m \alpha x) \leq 1$ for all nonarchimedean $v$, i.e. for all $v \in M_k \backslash S_\infty$. Let $z = m \alpha x$, then we have 
\[
	mc < \varphi_v(z_v) < 2mc \text{ for } v \in S_\infty \text{ and for some } m \in \mathbb{Z},
\]
\[
	\varphi_v(z_v) \leq 1 \text{ for } v \in M_k \backslash S_\infty
\]
By the second inequality, we have that $(z)$, the fractional ideal corresponding to $z$ is an integral ideal of $k$. Hence we have that $N(z) = |\mathcal{O}_k/(z)|$. Let $\Gamma:=\{ y \in \mathcal{O}_k ~|~ y = a_1 w_1 + \cdots + a_n w_n, 0 \leq a_i \leq m \}$. Then $\Gamma$ has $(m+1)^n$ elements. By the pigeonhole principal, we see that there exists an equivalence class in $\mathcal{O}_k/(z)$ with more than $m^n/N(z)$ elements. In fact, if all equivalence classes have less that $m^n/N(z)$, we have $|\Gamma| \leq m^n/N(z) \cdot N(z) = m^n < (m+1)^n = |\Gamma|$. We thus have a $\Gamma' \subset \Gamma$ with more than $m^n/N(z)$ elements which fall into the same equivalence class. Let $y_1 \in \Gamma'$ be fixed, and we vary $y_2 \in \Gamma'$. 
\\ \indent If $y_1 = \sum a_iw_i$ and $y_2 = \sum b_i w_i$,
\begin{align*}
	\varphi_v(y_1 - y_2) & = \varphi_v \left ( \sum a_i w_i - \sum b_i w_i \right ) \\
    & = \varphi_v \left ( \sum (a_i - b_i) w_i \right ) \\
    & \leq \sum \varphi_v(a_i - b_i) \varphi_v(w_i) \\
    & \leq m \sum \varphi_v(w_i) \leq mc \leq \varphi_v(z_v)
\end{align*}
for all $v$ archimedean. We had that $-m \leq a_i - b_i \leq m$. Because $y_1 - y_2, z_v \in \mathcal{O}_k$, we see that $y_1 - y_2 \in (z) \Rightarrow \varphi_v(y_1 - y_2) \leq \varphi_v(z_v)$ for nonarchimedean $v$. This is because $y_1 - y_2$ is more divisible by $\mfk{p}$ then $z_v$. $y_1 - y_2 \in L(z)$, so 
\[
	m^n/N(z) \leq |\Gamma'| \leq \lambda(z) \Rightarrow 1/N(z) \leq \lambda(z) m^{-n}
\]
For any $z \in \mathbb{I}_k$, we have that
\[
	\prod_{v \in M_k \backslash S_\infty} \varphi_v(z_v) = \prod_{i=1}^n \prod_{\mfk{p}|p_i} \varphi_{\mfk{p}}(z_\mfk{p}) = \prod_{i=1}^n p_i^{-\sum f_\mfk{p}n_\mfk{p}} = 1/N(z)
\]
Hence
\[
	\varphi(z) = \prod_{v \in M_k \backslash S_\infty} \varphi_v(z_v) \prod_{v \in S_\infty} \varphi_v(z_v) = 1/N(z) \prod_{v \in S_\infty} \varphi_v(z_v) \leq \lambda(z) m^{-n} (2mc)^n = \lambda(z) 2^nc^n
\]
We conclude that
\[
	\lambda = (2^nc^n)^{-1} \varphi(z)
\]
Because $m \alpha \in k^*$, we see that $\varphi(z) = \varphi(x)$ by the product formula and $\lambda(x) = \lambda(z)$, hence letting $c_k = (2^nc^n)^{-1}$
\end{proof}
\begin{lem} Let $x \in \mathbb{I}_k$ with $\varphi(x) \geq 2/c_k$, then there exists $\alpha \in k^*$ such that
\[
	1 \leq \varphi_v(\alpha x_v) \leq \varphi(x)
\]
for all $v \in M_k$.
\end{lem}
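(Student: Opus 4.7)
The plan is to invoke the Stronger Approximation Theorem just proved. Since we assume $\varphi(x) \geq 2/c_k$, that theorem yields
\[
    \lambda(x) \geq c_k \varphi(x) \geq 2,
\]
so in particular $L(x)$ is nonempty. I will pick any $\gamma \in L(x)$ and take $\alpha = \gamma^{-1} \in k^{*}$ as the candidate; the claim is then that this $\alpha$ works at every place.

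The lower bound $1 \leq \varphi_v(\alpha x_v)$ for every $v$ is immediate from membership in $L(x)$: since $\varphi_v(\gamma) \leq \varphi_v(x_v)$ for all $v \in M_k$, we obtain $\varphi_v(\alpha x_v) = \varphi_v(x_v)/\varphi_v(\gamma) \geq 1$. No further input is needed here.

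For the upper bound $\varphi_v(\alpha x_v) \leq \varphi(x)$ I will bring in the product formula. Rearranging, the inequality becomes
\[
    \varphi_v(\gamma) \cdot \prod_{w \neq v} \varphi_w(x_w) \geq 1.
\]
Using $\varphi_w(\gamma) \leq \varphi_w(x_w)$ for each $w \neq v$, it suffices to show $\varphi_v(\gamma) \cdot \prod_{w \neq v} \varphi_w(\gamma) \geq 1$, which is exactly $\prod_w \varphi_w(\gamma) = 1$, i.e.\ the product formula applied to $\gamma \in k^{*}$. I expect the only real obstacle in the write-up to be a bookkeeping issue around normalization: the product formula was stated with explicit weights $n_v$, whereas $\varphi$ on $\mathbb{I}_k$ is defined without them, so one must check that the convention adopted for $\varphi_v$ at the complex archimedean places (implicitly $\varphi_v = |\sigma(\cdot)|^{2}$, as is needed for the Stronger Approximation Theorem as proved) is precisely the one that makes $\prod_w \varphi_w(\gamma) = 1$ hold unweighted. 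Once that convention is pinned down, both bounds fall out mechanically and no new inequality beyond Stronger Approximation is required.
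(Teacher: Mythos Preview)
Your proof is correct and mirrors the paper's argument exactly: the paper also picks a nonzero $\beta\in L(x)$ (guaranteed by $\lambda(x)\geq 2$), sets $\alpha=\beta^{-1}$, obtains the lower bound directly, and for the upper bound writes $\varphi_v(\alpha x_v)=\varphi(\alpha x)\big/\prod_{w\neq v}\varphi_w(\alpha x_w)\leq \varphi(x)$, using the just-established lower bound on each factor in the denominator together with $\varphi(\alpha x)=\varphi(x)$---which is the same product-formula step you invoke, just organized in terms of $\alpha x$ rather than $\gamma$. Your remark about the $n_v$ normalization is well-taken; the paper's own text carries the same implicit convention issue.
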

\begin{proof} We have $2 \leq c_k \varphi(x) \leq \lambda(x)$, then there exists $\beta \in \lambda(x)$ nonzero such that $\varphi_v(\beta) \leq \varphi_v(x_v)$ for all $v \in M_k$. Consider $\alpha = \beta^{-1}$, then 
\[
	 1 \leq \varphi_v(\alpha x_v)
\]
and 
\[
	\varphi_v(\alpha x_v) = \dfrac{ \displaystyle \prod_{w} \varphi_{w}(\alpha x_w)}{\displaystyle \prod_{w \neq v} \varphi_w(\alpha x_w)} \leq \dfrac{\varphi(\alpha x)}{1} = \varphi(\alpha x) = \varphi(x)
\]
In fact,
\[
	1 \leq \varphi_w( \alpha x_w) \Rightarrow 1 \leq \prod_{w \neq v} \varphi_w(\alpha x_w) 
\]
\end{proof}
\begin{prp} $\mathcal{C}^0_k$ is compact
\end{prp}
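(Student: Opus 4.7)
The plan is to produce a compact subset of $\mathbb{I}_k^0$ that surjects onto $\mathcal{C}_k^0 = \mathbb{I}_k^0/k^*$ under the quotient map; since the quotient is continuous, this would force $\mathcal{C}_k^0$ to be compact.

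To set this up, I would first fix any $y \in \mathbb{I}_k$ with $\varphi(y) \geq 2/c_k$ (one may simply scale a single archimedean coordinate), set $C = \varphi(y)$, and consider the ``uniform annulus''
\[
W = \{\, z \in \mathbb{I}_k : 1 \leq \varphi_v(z_v) \leq C \text{ for all } v \in M_k \,\}.
\]
Given any $x \in \mathbb{I}_k^0$, the element $xy$ satisfies $\varphi(xy) = \varphi(y) = C \geq 2/c_k$, so the preceding lemma should furnish some $\alpha \in k^*$ with $\alpha xy \in W$, equivalently $\alpha x \in Wy^{-1}$. Since $k^* \subset \mathbb{I}_k^0$ by the product formula, $\alpha x$ still lies in $\mathbb{I}_k^0$, so every $k^*$-coset in $\mathbb{I}_k^0$ will be represented inside $W' := Wy^{-1} \cap \mathbb{I}_k^0$, reducing the problem to showing $W'$ is compact.

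The main technical obstacle is proving compactness of $W$. For each archimedean $v$ the slice $\{z_v \in k_v : 1 \leq \varphi_v(z_v) \leq C\}$ is a closed bounded annulus, hence compact by Heine--Borel. For a nonarchimedean $v$ with residue field of size $q_v$, discreteness of $\varphi_v$ limits $\varphi_v(z_v)$ to finitely many values $q_v^{-n}$ with $-\lfloor \log_{q_v} C\rfloor \leq n \leq 0$, each defining a compact coset of $\mathcal{O}_v^*$. The crucial point is that for all but finitely many $v$ one has $q_v > C$, forcing $n=0$ and collapsing the slice to $\mathcal{O}_v^*$; this exhibits $W$ as a closed subset of a set of the form $\prod_{v \in S} A_v \times \prod_{v \notin S} \mathcal{O}_v^*$ with $S$ finite and each $A_v$ compact, which is compact by Tychonoff.

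To conclude, multiplication by $y^{-1}$ is a homeomorphism of $\mathbb{I}_k$, so $Wy^{-1}$ inherits compactness; intersecting with the closed subgroup $\mathbb{I}_k^0 = \varphi^{-1}(\{1\})$ yields the compact set $W'$, and the continuous quotient map $\mathbb{I}_k^0 \to \mathcal{C}_k^0$ then sends $W'$ onto all of $\mathcal{C}_k^0$, delivering the result.
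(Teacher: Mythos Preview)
Your proposal is correct and follows essentially the same strategy as the paper: use the lemma to trap a $k^*$-translate of every idele of fixed large content inside the ``annulus'' $\{1 \le \varphi_v(z_v) \le C\}$, observe that discreteness of the nonarchimedean valuations collapses all but finitely many factors to $\mathcal{O}_v^*$ so that Tychonoff gives compactness, and then push down by the continuous quotient map. The only cosmetic difference is that you translate back by $y^{-1}$ and intersect with $\mathbb{I}_k^0$, whereas the paper instead works directly with the fiber $\varphi^{-1}(\rho)\subset C_k$, which is homeomorphic to $C_k^0$ via multiplication by $x_\rho$.
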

\begin{proof} By the product formula, $\varphi(x)=1$, so we get a well-defined map
\[
	\varphi: C_k \rightarrow \mathbb{R}^{\geq 0}
\]
with the kernel $C^0_k$, because it is the image of $\mathbb{I}_k^0$ under $\mathbb{I}_k/k^*$. First we show that for any $\rho \in \mathbb{R}^{\geq 0}$, $\varphi^{-1}(\rho)$ is homeomorphic to $C^0$. Let $x_\rho \in \mathbb{I}_k$ be such that $\varphi(x_\rho) = \rho$. We can find such by modifying it by one of its archimedean components. Then we see that $\varphi^{-1}(\rho) = x_\rho C^0_k$. In fact,
\[
	\varphi(x_1) = \varphi(x_2) \Rightarrow x_1x_2^{-1} \in ker(\varphi) = C_k^0 \Rightarrow x_1 \in x_2C_k^0
\]
thus homeomorphic as $\mathbb{I}_k$ is a topological group. 
\\ \indent Choose $\rho > 2/c_k$ and pick $x \in \varphi^{-1}(\rho)$, then by the stronger approximation theorem, we get $\alpha_x \in k^*$ such that
\[
	1 \leq \varphi_v(\alpha_x x_v) \leq \rho
\]
for all $v \in M_k$. We have that $v_\mfk{p}$ takes no values between $1$ and $p^f$ where $(p) = \mfk{p} \cap \mathbb{Z}$ and $f = [\mathcal{O}_k/\mfk{p}: \mathbb{Z}/p\mathbb{Z}]$, and there are only finitely many primes $\mfk{p}$ such that $p^f \leq \rho$, hence we have that
\[
	1 \leq |\alpha_x x_v|_v \leq \rho \text{ for } v \in S
\]
\[
	|\alpha_x x_v|_v = 1 \text{ for } v \notin S
\]
for some $S_\infty \subset S \subset M_k$ finite subset. We define 
\[
	T = \prod_{v \in S} \left ( \overline{B(0, \rho)} - B(0, 1) \right ) \times \prod_{v \notin S} \mathcal{O}_v^*
\]
then as each term is compact, we have $T$ compact by Tychonoff. $\varphi^{-1}(\rho)$ is contained inside the image of $T$ as we can change the elements in $\varphi^{-1}(\rho)$ by elements in $k^*$. The image of $T$ is compact and $\varphi^{-1}(\rho)$ is a closed subset of compact space $T$. $\varphi^{-1}(\rho)$ is compact.
\end{proof}
\begin{thm} The class group $Cl_k$ for a number field $k$ is finite.
\end{thm}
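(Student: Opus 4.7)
The plan is to combine three facts from earlier in the paper: the isomorphism $Cl_k \cong C_k^0 / C_{S_\infty}^0$ established in the preceding claim, the compactness of $C_k^0$ just proved, and the earlier proposition that a discrete compact space has only finitely many points. The strategy is to show that $Cl_k$ is simultaneously compact and discrete, which forces finiteness.

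Compactness of $Cl_k$ is immediate: $Cl_k$ is a quotient of $C_k^0$ via the surjection $C_k^0 \twoheadrightarrow Cl_k$ whose kernel is $C_{S_\infty}^0$, and any quotient of a compact space is compact by the earlier proposition on quotient spaces. Once the quotient topology on $Cl_k$ is identified with the discrete topology (the next step), this gives half of what we need.

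The main content is the discreteness argument. The key observation is that $\mathbb{I}_{S_\infty} = \prod_{v \in S_\infty} k_v^* \times \prod_{v \notin S_\infty} \mathcal{O}_v^*$ is precisely one of the basic open sets in the restricted product topology on $\mathbb{I}_k$, so it is an open subgroup. Intersecting with $\mathbb{I}_k^0$ gives an open subgroup $\mathbb{I}_{S_\infty}^0$ of $\mathbb{I}_k^0$, and since the quotient map $\mathbb{I}_k^0 \twoheadrightarrow C_k^0 = \mathbb{I}_k^0/k^*$ is an open map (for any open $U \subset \mathbb{I}_k^0$, the preimage of its image is $k^* U = \bigcup_{\alpha \in k^*} \alpha U$, which is open because translation by $\alpha$ is a homeomorphism), the image $C_{S_\infty}^0$ is open in $C_k^0$. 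An open subgroup of a topological group yields a discrete quotient, since every coset is then both open and closed; hence $Cl_k \cong C_k^0 / C_{S_\infty}^0$ inherits the discrete topology.

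Combining the two, $Cl_k$ is a compact discrete space, and therefore finite by the earlier proposition. The main obstacle is the discreteness step: one must check that $\mathbb{I}_{S_\infty}$ is open in the restricted product (which follows directly from the construction, as $S_\infty$ itself is an allowed finite index set containing $S_\infty$), and that openness of a subgroup is preserved under intersection with $\mathbb{I}_k^0$ and under the quotient by $k^*$. Once discreteness is established, the finiteness conclusion reduces to invoking the two compactness results already proved.
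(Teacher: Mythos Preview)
Your proposal is correct and follows essentially the same route as the paper: show that $C_{S_\infty}^0$ is open in $C_k^0$, conclude that the quotient $C_k^0/C_{S_\infty}^0 \cong Cl_k$ is discrete, and combine with compactness of $C_k^0$ to obtain finiteness. In fact you supply more detail than the paper does, which simply asserts that $C_{S_\infty}^0$ is open in $C_k^0$; your justification via the openness of $\mathbb{I}_{S_\infty}$ in the restricted product and the openness of the quotient map fills in exactly what the paper leaves implicit.
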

\begin{proof} $C_{S_\infty}^0$ is an open subset of $C^0_k$, hence $C^0_k/C_{S_\infty}^0$ is discrete, and compact as $C_k^0$ is compact. $\{ \{ x \} \}_{x \in X}$ is an open covering where $X = C^0_k/C_{S_\infty}^0$. By compactness, there exists a finite subcover, i.e. $C^0_k/C_{S_\infty}^0$ is finite. The isomorphism 
\[
	C_k^0/C_{S_\infty}^0 \cong Cl_k
\]
implies that $Cl_k$ is finite.
\end{proof}
\begin{thm}[Dirichlet $S$-Unit Theorem] For any finite set $S \subset M_k$, $k_S$ has rank $s-1$ where $s=|S|$.
\end{thm}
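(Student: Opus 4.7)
The plan is to construct a logarithmic embedding of $k_S$ into an $(s-1)$-dimensional Euclidean space and use the already-established compactness of the $S$-idele class group to promote the image to a full-rank lattice. First I would introduce the continuous group homomorphism
\[
    \ell: \mathbb{I}_S \longrightarrow \mathbb{R}^s, \qquad \ell(x) = \bigl(n_v \log \varphi_v(x_v)\bigr)_{v \in S},
\]
where $n_v$ is the local factor appearing in the product formula. Since $x_v \in \mathcal{O}_v^*$ forces $\varphi_v(x_v) = 1$ for $v \notin S$, the product formula gives $\sum_{v \in S} n_v \log \varphi_v(x_v) = 0$ whenever $x \in \mathbb{I}_S^0$, so $\ell(\mathbb{I}_S^0) \subseteq H$, where $H = \{(y_v) \in \mathbb{R}^s : \sum_{v \in S} y_v = 0\}$ is an $(s-1)$-dimensional hyperplane. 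A direct check shows $\ell$ surjects onto $H$, and its kernel $\prod_{v \in S}\{a \in k_v^* : \varphi_v(a) = 1\} \times \prod_{v \notin S}\mathcal{O}_v^*$ is compact.

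Next I would show $\ell(k_S)$ is discrete in $H$. For any bounded $B \subset \mathbb{R}^s$, the preimage $\ell^{-1}(B) \cap \mathbb{I}_S$ is compact, because each $v \in S$ component is confined to a compact annulus in $k_v^*$. An element $\alpha \in k_S \cap \ell^{-1}(B)$ lies in $\mathcal{O}_v$ for every $v \notin S_\infty$, so $\alpha \in \mathcal{O}_k$, and moreover it has bounded archimedean absolute values; a Northcott/Kronecker-type bound on the coefficients of the minimal polynomial of $\alpha$ over $\mathbb{Z}$ forces only finitely many such $\alpha$ to exist. Hence $\ell(k_S) \cap B$ is finite, i.e.\ $\ell(k_S)$ is discrete in $H$.

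To upgrade discreteness to full rank I would use the compactness of $C_S^0$. Since $\mathbb{I}_S$ is an open subset of $\mathbb{I}_k$, the image of $\mathbb{I}_S^0$ in the compact group $C_k^0$ is an open subgroup, hence also closed, hence compact; this image is precisely $C_S^0 = \mathbb{I}_S^0/k_S$. The map $\ell$ descends to a continuous surjection $\bar\ell: C_S^0 \twoheadrightarrow H/\ell(k_S)$, so $H/\ell(k_S)$ is compact. A discrete subgroup of $H \cong \mathbb{R}^{s-1}$ with compact quotient is a full-rank lattice, so $\ell(k_S) \cong \mathbb{Z}^{s-1}$.

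Finally, the kernel of $\ell|_{k_S}$ consists of $\alpha \in k^*$ with $\varphi_v(\alpha) = 1$ for all $v \in S$; together with the built-in condition $\varphi_v(\alpha) = 1$ for $v \notin S$, this says $\alpha \in \mathcal{O}_k^*$ has every complex embedding on the unit circle, so by Kronecker's theorem $\alpha$ lies in the finite group $\mu(k)$ of roots of unity of $k$. Thus $k_S$ fits into a short exact sequence $1 \to \mu(k) \to k_S \to \mathbb{Z}^{s-1} \to 0$, and $k_S$ has rank $s-1$. The main obstacle I anticipate is the Northcott/Kronecker bookkeeping in the discreteness step and the identification of the torsion subgroup: discreteness of $k^*$ inside $\mathbb{I}_k$ and finiteness of the set of algebraic integers with all archimedean absolute values bounded by a constant are standard but have not been formally established in the paper, so they must be verified via the minimal-polynomial / symmetric-function argument before the clean compactness machinery can be applied.
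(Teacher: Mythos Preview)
Your proposal is essentially the paper's own proof: the same logarithmic map into $\mathbb{R}^s$, the same hyperplane $H$, discreteness of $\ell(k_S)$ via bounds on the coefficients of minimal polynomials, and compactness of $C_S^0$ forcing the image to have full rank (the paper phrases this last step as $H/W=0$ for $W$ the real span of $\ell(k_S)$, which is equivalent to your ``discrete subgroup with compact quotient is a full lattice''). One small slip to watch in the discreteness step you already flagged: for $v\in S\setminus S_\infty$ an element $\alpha\in k_S\cap\ell^{-1}(B)$ need not lie in $\mathcal{O}_v$, only in a bounded-valuation annulus, so $\alpha$ is an $S$-integer rather than an element of $\mathcal{O}_k$; clearing the bounded denominators at those finitely many finite places (or bounding the symmetric functions directly from all archimedean absolute values) repairs the argument.
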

\begin{proof} Consider the $\log$ map on ideles
\[
	\log : \mathbb{I}_S \rightarrow \mathbb{R}^s
\]
defined by
\[
	\log(x) = (\log \varphi_1(x_1)^{n_1}, \ldots, \log \varphi_s(x_s)^{n_s})
\]
where $\varphi_i$ corresponds to $v_i \in S$, and $x_i$ is the $v_i$-component of $x \in \mathbb{I}_S$. Also, $n_i$ is $1$ or $2$ depending whether $\varphi_i$ is real or complex embedding, respectively. As each component is a composition of continuous maps, they are continuous. Consequently, $\log$ is continuous.
\\ \indent By definition $x \in \mathbb{I}_S^0$ implies that $\varphi(x) =1$ and $\varphi_v(x_v) = 1$ for $v \notin S$. It follows that $\log(\mathbb{I}_S^0)$ lies inside the hyperplane $H$ which is composed of elements $(y_1, \ldots, y_s) \in \mathbb{R}^s$ such that
\[
	y_1 + \cdots + y_s = 0
\]
We may, without loss of generosity, assume that $v_s$ is archimedean. The following $n-1$ vectors
\begin{center}
$\begin{array}{c}
	(a_1, 0, \ldots, 0, -a_1) \\
    (0, a_2, \ldots, 0, -a_2) \\
    \vdots \\
    (0, 0, \ldots, a_{s-1}, -a_{s-1})
\end{array}$
\end{center}
are linearly independent for $a_i$ nonzero. We conclude that $dim_\mathbb{R}(H) = s-1$. Because all vectors lies inside $\log(\mathbb{I}_S^0)$, we have just proved that it generates $H$. 
\begin{clm} $\log(k_S)$ is discrete in $\mathbb{R}^s$.
\end{clm}
\begin{proof} Let $T$ be a bounded subset of $\mathbb{R}^n$, then the value of archimedean valuation on $\log(k_S) \cap T$ is bounded. If the coefficient of polynomials over $\mathbb{Z}$ that satisfies an element in $k^*$ is not bounded, then we need a $x \in k$ with a bigger valuation to make $f(x) = 0$. The coefficients of $f$ is thus bounded. Because the coefficients are in $\mathbb{Z}$ and the degree is bounded by $[k:\mathbb{Q}]$, we have only finitely many such polynomials. Note that we don't have to assume that the polynomial is monic, hence all elements in $k$ satisfies some polynomial with coefficient in $\mathbb{Z}$. $T \cap \log(k_S)$ is, thus, finite. 
\end{proof}
\begin{clm} Let $X$ be a discrete subset of $\mathbb{R}^s$, then it is free abelian of rank $dim(\mathbb{R}X)$ where $\mathbb{R}X$ is the $\mathbb{R}$-span of $X$.
\end{clm}
\begin{proof} We use induction on the dimension of $\mathbb{R}X$. If $dim(\mathbb{R}X)=1$, then $X$ discrete implies that there exists nonzero $x \in X$ such that $x$ is closest to $0$. Now suppose $X$ is generated by $y$ which exists as $dim(\mathbb{R}X) = 1$. $y = kx$ for some $k \in \mathbb{R}$. There exists $t \in \mathbb{Z}$ such that $|k-t| < 1$. Then $ty - x$ is closer to $0$ than $x$ which contradicts the minimality of $x$.  
\\ \indent We now assume that $dim(\mathbb{R}X) = m > 1$ with basis $x_1, \ldots, x_m$ of $\mathbb{R}X$. Let $X_0$ be the subgroup of $X$ generated by $x_1, \ldots, x_{m-1}$, then by the induction hypothesis, we have
\[
	X_0 = \mathbb{Z}x_1 + \ldots + \mathbb{Z}x_{m-1}
\]
Also, let $X'$ be the subset of $X$ consisting of elements of the form
\[
	a_1x_1 + \ldots + a_mx_m
\]
where $0 \leq a_i < 1$ for $i=1, \ldots, m-1$ and $0 \leq a_m \leq 1$ with $a_i \in \mathbb{R}$. The constraints make $X'$ into a bounded subset of a discrete set $X$, hence it is finite. We choose $x' \in X'$ with
\[
	x' = a_1'x_1 + \cdots + a_m' x_m
\]
such that the coefficient of $x_m$ is minimal, i.e. $|a_m'|$ is minimal. Choose an arbitrary element $x \in X$, then there exists $t \in \mathbb{Z}$ such that $x-tx'$ has its $m$-th coefficient $a_m$ satisfy $0 \leq a_m < a_m'$. There exists $x_0 \in X_0$ such that $x - kx' - x_0 \in X'$. Because $|a_m'|$ was minimal, we have that $a_m = 0$, hence $x - kx' - x_0 \in X_0$. By the restriction $0 \leq a_i <1$, we see that $x - kx' - x_0 = 0$. We have already established linear independence, so we have shown that
\[
	X = \mathbb{Z}x_1 \oplus \cdots \oplus \mathbb{Z}x_n
\]
as desired.
\end{proof}
Let $W$ be the $\mathbb{R}$-span of $\log(k_S)$. Because $\log(k_S) \subset W$, there exists an induced epimorphism
\[
	\log : \mathbb{I}_S^0/k_S = C_S^0 \rightarrow H/W
\]
We have proved that $\mathbb{I}_k^0$ is closed, so taking the subspace topology and the quotient topology, we see that $C_S^0$ is a closed subset of $C_k^0$. This shows that $C_S^0$ is compact. As $\log$ is continuous, $H/W$ is a compact subset of $\mathbb{R}^n$ for some $n$.
\\ \indent Suppose that $X$ is a compact subset of $\mathbb{R}^n$. $X$ is then of the form
\[
	X = X_1 \times \cdots \times X_n
\]
As projection maps are continuous by definition, $X_i$ are compact. $X_i$ is a subgroup of $\mathbb{R}$, hence it is either discrete or dense in $\mathbb{R}$. $X_i$ discrete implies that $X_i$ is discrete and compact, i.e. finite. The only finite subgroup of $\mathbb{R}$ is $0$. If $X_i$ is dense in $\mathbb{R}$, then $X_i$ compact implies that $X_i$ closed. $X_i = \mathbb{R}$, i.e. we have that $X$ is of the form $\mathbb{R}^l$ for some $l$. $\mathbb{R}^l$ is compact if and only if $l=0$. $H/W = 0 \Rightarrow H=W$.
\\ \indent We have proved that $\log(k_S)$ has rank $s-1$. The map $\log: k_S \rightarrow \log(k_S)$. Then the kernel is $x \in k$ such that $\varphi_v(x) = 1$ for $v$ archimedean valuation. The image of the kernel is thus bounded, so by similiar argument, the kernel is finite. Hence $k_S$ and $\log(k_S)$ have the same rank.
\end{proof}

\appendix
\addcontentsline{toc}{part}{Appendices}
\section{Lemmas}
\begin{prp}[Reverse Triangle Inequality] \label{reversetri} Let $\varphi$ be a valuation of a field $F$ with $||\varphi|| \leq 2$, then we have
\[
	|\varphi(x) -\varphi(y)| \leq \varphi(x-y)
\]
where $|{\color{white}x}|$ is the usual absolute value of $\mathbb{R}$.
\end{prp}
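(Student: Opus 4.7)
The plan is to reduce the statement to the ordinary triangle inequality, which is available because $\|\varphi\| \leq 2$ is equivalent to $\varphi$ being subadditive, as established in the earlier proposition characterizing $\|\varphi\| \leq 2$. Once subadditivity is on the table, the reverse triangle inequality follows by the same one-line manipulation used for $\mathbb{R}$: write $x$ as $(x-y)+y$ and symmetrically write $y$ as $(y-x)+x$.

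Concretely, I would first invoke $\|\varphi\| \leq 2 \Rightarrow \varphi(a+b) \leq \varphi(a)+\varphi(b)$. Applied to $a=x-y$, $b=y$, this yields $\varphi(x) \leq \varphi(x-y)+\varphi(y)$, hence $\varphi(x)-\varphi(y) \leq \varphi(x-y)$. For the other sign I need the symmetric inequality $\varphi(y)-\varphi(x) \leq \varphi(x-y)$, which requires knowing $\varphi(y-x)=\varphi(x-y)$. This is the one small step to justify: from multiplicativity and the general fact (already noted in the paper for any valuation) that $\varphi(-1)=1$, we get $\varphi(-z)=\varphi(-1)\varphi(z)=\varphi(z)$, so $\varphi(y-x)=\varphi(-(x-y))=\varphi(x-y)$. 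Applying subadditivity to $y=(y-x)+x$ then gives $\varphi(y)-\varphi(x)\leq\varphi(y-x)=\varphi(x-y)$.

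Combining the two inequalities yields $|\varphi(x)-\varphi(y)| \leq \varphi(x-y)$, which is the desired reverse triangle inequality. There is no real obstacle here; the only point that is not purely mechanical is recalling $\varphi(-1)=1$, and this was already derived at the beginning of the section on abstract valuations from $1=\varphi(-1)^2$ together with $\varphi$ taking nonnegative real values. The proof should therefore fit in a few lines.
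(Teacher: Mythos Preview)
Your proposal is correct and follows essentially the same route as the paper: invoke the triangle inequality (available since $\|\varphi\|\le 2$), write $x=(x-y)+y$ and $y=(y-x)+x$, and combine the two resulting inequalities. The only difference is that you spell out $\varphi(y-x)=\varphi(x-y)$ via $\varphi(-1)=1$, whereas the paper leaves this implicit under the word ``symmetry''.
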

\begin{proof} For any $x,y \in F$,
\[
	\varphi(y) = \varphi(y-x+x) \leq \varphi(y-x) + \varphi(x) 
\]
which implies
\[
   \varphi(y) - \varphi(x) \leq \varphi(x-y) 
\]
then by symmetry we get
\[
	| \varphi(x)-\varphi(y)| \leq \varphi(x-y)
\]
\end{proof}
\begin{lem} \label{approxprod} Let $a_1, \ldots, a_n, b \in \mathbb{R}^{\geq 0}$ with $b \in (b_1, b_2)$, then there exists $\varepsilon \in \mathbb{R}^{>0}$ such that if
\[
	\prod_{i=1}^n a_i = b
\]
then
\[
	b_1 < \prod_{i=1}^n (a_i - \varepsilon) \text{ \hspace*{1em} and \hspace*{1em} } \prod_{i=1}^n (a_i + \varepsilon) < b_2 
\]
\end{lem}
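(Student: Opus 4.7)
The plan is to regard both sides of the desired inequalities as continuous functions of $\varepsilon$ and invoke continuity at $\varepsilon = 0$. Define
\[
    f(\varepsilon) := \prod_{i=1}^n (a_i + \varepsilon), \qquad g(\varepsilon) := \prod_{i=1}^n (a_i - \varepsilon).
\]
Both $f$ and $g$ are polynomials in the single real variable $\varepsilon$ (with the $a_i$ treated as constants), hence continuous on all of $\mathbb{R}$. Moreover $f(0) = g(0) = \prod a_i = b$ by hypothesis.

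Since $b$ lies in the \emph{open} interval $(b_1, b_2)$, we have $f(0) < b_2$ and $g(0) > b_1$ strictly. Applying continuity of $f$ at $0$ to the open neighborhood $(-\infty, b_2)$ of $f(0)$, there exists $\delta_1 > 0$ such that $|\varepsilon| < \delta_1$ implies $f(\varepsilon) < b_2$. Applying continuity of $g$ at $0$ to the open neighborhood $(b_1, \infty)$ of $g(0)$, there exists $\delta_2 > 0$ such that $|\varepsilon| < \delta_2$ implies $g(\varepsilon) > b_1$. Setting $\varepsilon := \tfrac{1}{2}\min(\delta_1, \delta_2) > 0$ yields both desired inequalities simultaneously.

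There is no real obstacle here: the lemma is a direct consequence of continuity of polynomial multiplication, and the only mild subtlety is that one should not worry about $a_i - \varepsilon$ possibly becoming negative, since $g$ is defined as a polynomial and continuity is a statement on all of $\mathbb{R}$, independent of sign. If one preferred a more quantitative proof, one could expand
\[
    f(\varepsilon) = b + \varepsilon \sum_{i} \prod_{j \neq i} a_j + O(\varepsilon^2),
\]
and produce an explicit threshold $\varepsilon$ in terms of $b_2 - b$, $b - b_1$, and $\max_i a_i$; but the continuity argument is cleaner and suffices for the application.
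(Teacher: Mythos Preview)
Your proof is correct. You invoke continuity of the polynomial maps $\varepsilon \mapsto \prod_i (a_i \pm \varepsilon)$ at $\varepsilon = 0$, and the openness of the interval $(b_1,b_2)$ immediately yields a suitable $\varepsilon$. This is a clean, conceptually clear argument.

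The paper takes the more explicit route that you allude to at the end: it expands $\prod_i (a_i + \varepsilon)$ as $b + \varepsilon \cdot (\text{polynomial in } a_i, \varepsilon)$, bounds the correction term by $\varepsilon \cdot l$ for some constant $l$ depending on the $a_i$, and then chooses $\varepsilon$ small relative to $\gamma/l$ where $\gamma < \min(b_2 - b, b - b_1)$. The lower bound for $\prod_i (a_i - \varepsilon)$ is handled symmetrically. So the paper's proof is quantitative and hands-on, while yours abstracts the computation into a single appeal to continuity. Your version is shorter and avoids the bookkeeping of the expansion (which in the paper is somewhat awkwardly written); the paper's version, in principle, gives an explicit threshold for $\varepsilon$, though that explicitness is not used anywhere in the subsequent application to continuity of $\varphi: \mathbb{I}_k \to \mathbb{R}^{>0}$.
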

\begin{proof} Let $0 < \gamma < \min(|b_1 - b|, |b_2 - b|)$ and define $\varepsilon = \min(l, \gamma)$ with $l$ to be defined later then 
\[
	 \begin{array}{lcl}
    	 \displaystyle \prod_{i=1}^n (a_i + \varepsilon) & = & \displaystyle \prod_{i=1}^n a_i + \varepsilon \sum_{j=1}^m \varepsilon^{e_{\varepsilon, j}} a_1^{e_{1,j}} \cdots a_n^{e_{n,j}} \\
        & \leq & b + \varepsilon l \\
        & & (l = \displaystyle \sum_{i=1}^m a_1^{e_{1,j}} \cdots a_n^{e_{n,j}}) \\ 
        & \leq & b + \displaystyle \frac{\gamma}{l} l \\ 
        & \leq & b + \gamma < b_2
    \end{array}
\]
Similarly, we have that 
\[
	\begin{array}{lcl}
	\displaystyle \prod_{i=1}^n (a_i - \varepsilon) & \geq & \displaystyle \prod_{i=1}^n a_i + \varepsilon \sum_{i=1}^m (-\varepsilon)^{e_{\varepsilon, j}} a_1^{e_{j,1}} \cdots a_n^{e_{j,n}} \\
    & \geq & a_i - \varepsilon l \\ 
    & \geq & a_i - \displaystyle \frac{\gamma}{l} l \\
    & \geq & a_i - \gamma > b_1
    \end{array}
\]
which concludes the proof.
\end{proof}
\begin{prp} \label{normlem} Let $K/F$ be finite and $x \in K$. $T_x : K \rightarrow K$ be a $F$-vector space homomorphism defined by $y \mapsto xy$. If $f$ is the characteristic polynomial of $T_x$ and $g$ be the minimal polynomial of $x$ over $F$, then $f=g^d$ where $d=[K:F(x)]$.
\end{prp}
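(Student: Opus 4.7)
The plan is to reduce to the case $K = F(x)$ and then peel off the tower. Write $n = \deg g$, so that $\{1, x, x^2, \ldots, x^{n-1}\}$ is a basis of $F(x)$ over $F$, and let $w_1, \ldots, w_d$ be a basis of $K$ over $F(x)$. Then the collection
\[
\mathcal{B} = \{\, x^i w_j \;:\; 0 \leq i \leq n-1,\; 1 \leq j \leq d \,\}
\]
is a basis of $K$ as an $F$-vector space by the tower law, and I will order it by grouping the $n$ vectors $w_j, xw_j, \ldots, x^{n-1}w_j$ consecutively for each $j = 1, \ldots, d$.

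First I would handle the case $K = F(x)$, i.e.\ $d = 1$. Writing $g(t) = t^n + a_{n-1}t^{n-1} + \cdots + a_0$, the map $T_x$ sends $x^i \mapsto x^{i+1}$ for $i < n-1$ and $x^{n-1} \mapsto x^n = -a_0 - a_1 x - \cdots - a_{n-1}x^{n-1}$. The matrix of $T_x$ in the basis $\{1, x, \ldots, x^{n-1}\}$ is therefore the companion matrix $C_g$ of $g$, and a standard cofactor expansion along the first column (or row) shows $\det(tI - C_g) = g(t)$. Hence $f = g$ in this case.

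For the general case, observe that $T_x$ preserves each $F$-subspace
\[
V_j = F w_j + F x w_j + \cdots + F x^{n-1} w_j \subset K,
\]
because $T_x(x^i w_j) = x^{i+1} w_j$ lies in $V_j$, and when $i = n - 1$ we use $x^n w_j = -a_0 w_j - a_1 xw_j - \cdots - a_{n-1} x^{n-1} w_j$, which again lies in $V_j$. With the ordering of $\mathcal{B}$ chosen above, the matrix of $T_x$ is block-diagonal with $d$ identical blocks, each being the companion matrix $C_g$. Therefore
\[
f(t) = \det(tI - T_x) = \prod_{j=1}^{d} \det(tI - C_g) = g(t)^d,
\]
as claimed.

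I expect the only subtlety to be the verification that the ordered set $\mathcal{B}$ really is a basis and that $T_x$ preserves each $V_j$, but both follow immediately from the tower law $[K:F] = [K:F(x)]\cdot[F(x):F] = dn$ together with the defining relation $g(x) = 0$; no genuine obstacle arises.
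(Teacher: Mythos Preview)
Your proof is correct and follows essentially the same approach as the paper: choose the basis $\{x^i w_j\}$ coming from the tower $K/F(x)/F$, observe that the matrix of $T_x$ is block-diagonal with $d$ copies of the companion matrix $C_g$, and conclude $f = g^d$ from the fact that the characteristic polynomial of $C_g$ is $g$. The paper carries out the cofactor expansion for $\det(tI - C_g)$ explicitly by induction, while you cite it as standard, but there is no substantive difference.
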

\begin{proof} We have a simple tower
\[
	K/F(x)/F
\]
let $m = [F(x):F]$, then $1, x, \ldots, x^{m-1}$ is its basis. Also let $w_1, \ldots, w_d$ be the $F(x)$-basis of $K$. This gives us 
\[
	w_1, w_1x, \ldots, w_1x^{m-1}; \cdots ; w_d, w_d x, \ldots, w_d x^{m-1}
\]
a $F$-basis of $K$. Let $g(t) = t^m + c_1 t^{m-1} + \cdots + c_m$. Then the matrix representation of $T_x$ with respect to such basis is a block matrix on the diagonal with each block
\[
	\begin{pmatrix}
    	0 & 1 & 0 & \cdots & 0 \\
        0 & 0& 1 & \cdots & 0 \\
        0 & 0 & 0 & \ddots & 1 \\
        -c_m & -c_{m-1} & -c_{m-2} & \cdots & -c_1
    \end{pmatrix}
\]
the characteristic polynomial is then
\[
	\begin{vmatrix}
    	t & -1 & 0 & \cdots & 0 \\
        0 & t & -1 & \cdots & 0 \\
        0 & 0 & t & \ddots & 1 \\
        c_m & c_{m-1} & c_{m-2} & \cdots & c_1
    \end{vmatrix} = 
    t 
	\begin{vmatrix}
    	t & -1 & \cdots & 0 \\
        0 & t & \cdots & \vdots \\
        \vdots & \vdots & \ddots & -1 \\
        c_m & c_{m-2} & \cdots & c_1
    \end{vmatrix}
    - (-1)
    \begin{vmatrix}
    	0 & -1 & \cdots & 0 \\
        0 & t & \cdots & \vdots \\
        \vdots & \vdots & \ddots & -1 \\
        c_{m-1} & c_{m-2} & \cdots & c_1
    \end{vmatrix}
\]
The left determinant is equal to $(t^{m-1} + \cdots + c_{m-1})$ by induction, and the right matrix is also by induction simply $c_m$. The above expression is equal to 
\[
	t(t^{m-1} + \cdots c_{m-1}) + c_m = t^m + c_1t^{m-1} + \cdots + c_m
\]
Thus have thus shown that $f = g^d$.
\end{proof}
In particular, if $x \in F$, then the minimal polynomial is $t-x$, hence the characteristic polynomial is $(t-x)^n$ where $n=[K:F]$ as $F(x) =F$. Then the norm which is the determinant is $x^n$, i.e. $N^K_F(x) = x^n$ for $x \in F$.
\newpage
\section{Quotes}
\begin{aquote}{Don Blasius}
Ideles and adeles are a basic innovation. In addition to making class-field theory easier to formulate, they have interesting new properties themselves, and are essentially to understanding L-functions, Galois representations, and automorphic forms. A huge amount of current research concerns these topics and their interconnections, which taken together go by the name ``non-abelian class field theory" but really have broader interest.
\end{aquote}
\vspace{5pt}
\begin{aquote}{Tom Weston} When considering the adeles and ideles, it is their topology as much as their
algebraic structure that is of interest. Many important results in number theory
translate into simple statements about the topologies of the adeles and ideles. For
example, the finiteness of the ideal class group and the Dirichlet unit theorem are
equivalent to a certain quotient of the ideles being compact and discrete. 
\end{aquote}
\vspace{5pt}
\begin{aquote}{Serge Lang} In classical number theory, one embeds a number field in the Cartesian
product of its completions at the archimedean absolute values, i.e. in a
Euclidean space. In more recent years (more precisely since Chevalley
introduced ideles in 1936, and Weil gave his adelic proof of the RiemannRoch
theorem soon afterwards), it has been found most convenient to
take the product over· the completions at all absolute values, including
the p-adic ones, with a suitable restriction on the components.
\end{aquote}
\vspace{5pt}
\begin{aquote}{Neukirch}
The role held in local class field theory by the multiplicative group of the
base field is taken in global class field theory by the idele class group. The
notion of idele is a modification of the notion of ideal. It was introduced
by the French mathematician Claude Chevalley (1909-1984) with a view
to providing a suitable basis for the important local-to-global principle, i.e.,
for the principle which reduces problems concerning a number field $K$ to
analogous problems for the various completions $K_\mfk{p}$. Chevalley used the term \emph{ideal element}, which was abbreviated as id. el.
\end{aquote}
\newpage
\printindex

\end{document}